\numberwithin{equation}{section}
\newcommand{\dispersion}{\mathfrak{e}}
\newcommand{\emax}{\dispersion_{\max}}
\newcommand{\emin}{\dispersion_{\min}}
\newcommand{\disc}{{\ensuremath{\mathrm{disc}}}}
\newcommand{\ess}{{\ensuremath{\mathrm{ess}}}}
\newcommand{\p}{\partial}
\newcommand{\os}{\mathrm{os}}
\newcommand{\oa}{\mathrm{oa}}
\newcommand{\ea}{\mathrm{ea}}
\newcommand{\es}{\mathrm{es}}
\renewcommand{\epsilon}{\varepsilon}
\renewcommand{\hat}{\widehat}
\renewcommand{\tilde}{\widetilde}
\newcommand{\C}{\mathbb{C}}
\newcommand{\R}{\mathbb{R}}
\newcommand{\T}{\mathbb{T}}
\newcommand{\Z}{\mathbb{Z}}
\newcommand{\cF}{{\mathcal F}}
\newcommand{\cH}{{\mathcal H}}
\renewcommand{\d}{\mathrm{d}}
\newtheorem{theorem}{Theorem}[section]
\newtheorem{lemma}[theorem]{Lemma}
\newtheorem{corollary}[theorem]{Corollary}
\newtheorem{hypothesis}[theorem]{Hypothesis}
\newtheorem{proposition}[theorem]{Proposition}
\newtheorem{remark}[theorem]{Remark}
\newtheorem{example}[theorem]{Example}
\date{\today}
\begin{document}
\title[]{On the spectrum of Schr\"odinger-type operators on two dimensional lattices}

\author[]{Shokhrukh Yu. Kholmatov, Saidakhmat N. Lakaev, Firdavsjon M. Almuratov}

\address[Sh. Kholmatov]{University of Vienna,
Oskar-Morgenstern-Platz 1, 1090  Vienna, Austria}
\email{shokhrukh.kholmatov@univie.ac.at}

\address[S. Lakaev]{Samarkand State University, University boulevard 15, 140104 Samarkand, Uzbekistan}
\email{slakaev@mail.ru}

\address[F. Almuratov]{Samarkand State University, University boulevard 15, 140104 Samarkand, Uzbekistan}
\email{almurotov93@mail.ru}

\begin{abstract}
We consider a  family 
$$
\widehat H_{a,b}(\mu)=\widehat  H_0 +\mu \widehat V_{a,b}\quad \mu>0,
$$ 
of Schr\"odinger-type operators on the two dimensional lattice $\Z^2,$  where $\widehat H_0$ is a Laurent-Toeplitz-type convolution operator with a given Hopping matrix $\hat \dispersion$ and  $\widehat V_{a,b}$ is a potential taking into account only the zero-range and one-range interactions, i.e.,  a multiplication operator by a function $\hat v$ such that $\hat v(0)=a,$ $\hat v(x)=b$ for $|x|=1$ and $\hat v(x)=0$ for $|x|\ge2,$ where $a,b\in\R\setminus\{0\}.$ Under certain conditions on the regularity of $\hat\dispersion$ we completely describe the discrete spectrum of $\hat H_{a,b}(\mu)$ lying above the essential spectrum and study the dependence of eigenvalues on parameters $\mu,$ $a$ and $b.$ Moreover, we characterize the threshold eigenfunctions and resonances.
\end{abstract}

\keywords{dispersion relations, eigenvalues, expansion, threshold resonance, determinant}
\maketitle

\section{Introduction}

In the recent paper \cite{KhLA:2021_jmaa} we have studied the family
\begin{equation}\label{hamiltonian1}
\hat H_{\hat v}(\mu) = \hat H_0 + \mu\hat V,\quad\mu>0,
\end{equation}
of Schr\"odinger-type operators in $\ell^2(\Z^d)$ for $d=1,2,$
where $\hat H_0$ is a Laurent-Toeplitz-type convolution operator with a so-called Hopping matrix $\hat\dispersion,$ $\hat V$ is a multiplication operator by a function $\hat v$ and $\mu>0$ is a coupling constant. Under general assumptions on $\hat \dispersion$ and $\hat v,$ adapting the methods of \cite{K:1977_ann.phys} we were able to prove the finiteness of discrete spectrum of $\hat H_{\hat v}(\mu)$ (Bargmann-type estimates), to establish sufficient conditions for the existence of eigenvalues, to analyse existence and uniqueness of eigenvalues for small $\mu>0$ as well as to find the expansions of the largest and smallest eigenvalues of $\hat H_{\hat v}(\mu)$ as $\mu\searrow 0.$

The family $\hat H_{\hat v}(\mu)$ given by \eqref{hamiltonian1} can be seen as a discrete counterpart of a continuous nonlocal Schr\"odinger-type operators appearing
as a diffusion generator in certain stochastic models  (see e.g. \cite{KKP:2008,KMPZ:2016,KS:2006}). At the same time they can also be seen as an effective one-particle discrete
Schr\"odinger operator $\hat H_{a,b}(\mu)(K)$ parametrized by quasi-momenta $K\in\T^d$, associated to the Hamiltonian of a system of two arbitrary and/or identical particles  in $d$-dimensional lattice $\Z^d$ (see e.g.
\cite{ALMM:2006_cmp,FIC:2002_phys.rev, LKL:2012.tmf,LKh:2012_izv,LKh:2011_jpa}). Such lattice models have become popular in recent years because they represent a minimal, natural Hamiltonian describing the systems of ultracold atoms in optical lattices -- systems with highly controllable
parameters  such as lattice geometry and dimensionality, particle masses, tunneling, temperature etc.  (see e.g.,
\cite{B:2005_nat.phys,JBC:1998_phy.rev,JZ:2005_ann.phys,LSA:2012_book}
and references therein). In contrast to usual condensed matter systems, where stable composite objects are usually formed by
attractive forces and repulsive forces separate particles in free space, the  controllability of collision properties of ultracold atoms allowed to observe experimentally a stable repulsive bound pair of ultracold atoms in the optical
lattice $\Z^3$, see e.g.,
\cite{HWCR:2012,WTLG:2006_nature,ZNON:2008}.

In this paper we consider the potential
\begin{equation}\label{potential_fun0000}
\hat v(x)=
\begin{cases}
a, & \text{if $x=0,$}\\
b, & \text{if $|x|=1,$}\\
0, & \text{if $|x|>1,$}
\end{cases}
\quad a,b\in \R\setminus\{0\},
\end{equation}
in the two-dimensional lattice $\Z^2$ and study the discrete spectrum of the operator $\hat H_{a,b}(\mu):=\hat H_{\hat v}(\mu)$ lying above the essential spectrum. We also use the momentum representation $H_{a,b}(\mu)$ of $\hat H_{a,b}(\mu),$ given in the Hilbert space $L^2(\T^2)$ of square-integrable functions on the two dimensional torus $\T^2,$ via the Fourier transform $\cF$ as 
$$
H_{a,b}(\mu) = \cF\hat H_{a,b}(\mu)\cF^*
$$
(see \eqref{def:H_mu999} below).  We are interested in the quantitative and qualitative properties of  eigenvalues of $\hat H_{a,b}(\mu)$ depending on parameters $a,b\ne0$ and $\mu>0.$ More specifically, we study sufficient and necessary conditions for the existence of discrete spectrum $\sigma_\disc(\hat H_{a,b}(\mu_0))$, a characterization of emergence of eigenvalues and eigenfunctions, dependence of the number of eigenvalues on parameters $\mu$, $a$ and $b,$ and also the dependence of eigenvalues on $\mu$.

We recall that for (one-particle) Schr\"odinger operators $-\Delta + \mu V$ in  $\R^d$ the eigenvalue emergence is called \emph{coupling constant threshold phenomenon} -- a situation where for some $\mu_0\ge0,$  some eigenvalue $e(\mu)$ of $-\Delta + \mu V$ is absorbed by the essential spectrum, i.e., 
$e(\mu)\nearrow 0$ as $\mu\searrow \mu_0,$ and conversely, as  $\mu\nearrow \mu_0+\epsilon,$ the essential spectrum ``gives birth'' to a new eigenvalue $e(\mu)$ (see e.g., \cite{KS:1980} and the references therein). 
For lattice operators of type \eqref{hamiltonian1} the coupling constant threshold phenomenon has been studied for instance in \cite{LKh:2011_jpa,LKh:2012_izv,LKL:2012.tmf} for zero-range potential, i.e., for $\hat v$ in \eqref{potential_fun0000} with $b=0$. 

Note that by the Kato-Rellich perturbation theory  if $E(\mu_0)$ is an isolated eigenvalue of $\hat H_{\hat v}(\mu_0),$ then for all $\mu$ sufficiently close $\mu_0$ the operator $\hat H_{\hat v}(\mu)$ has an isolated eigenvalue $E(\mu)$ such that the function $\mu\mapsto E(\mu)$ is real analytic at $\mu_0.$ However, at a coupling constant threshold $\mu_0\ge0$, as an isolated eigenvalue,  $E(\mu)$ is absorbed by the essential spectrum, one does not expect analyticity, rather it is expected that $E(\mu)$ admits a convergent expansion in some singular terms containing $\mu-\mu_0.$ For very nice sign-definite potentials $V$ in $\R^d$ such expansions for eigenvalues of $-\Delta + \mu V$ have been obtained in all dimensions $d,$ for instance in \cite{H:1985_jot,KS:1980}, 
and for zero-range  potentials in $\Z^d$, for the eigenvalues of $\hat H_{\hat v}(\mu)$ analogous expansions in all dimensions were established in \cite{LKL:2012.tmf,LKh:2012_izv,LKh:2011_jpa}.  

The coupling constant threshold phenomenon provides a powerful theoretical explanation for emergence of eigenvalues from the essential spectrum. In dimensions $d=1,2$ the smallest coupling constant thresholds, i.e., appearance of the smallest (or largest) eigenvalues, are fairly well-studied \cite{H:1985_jot,KhLA:2021_jmaa,KS:1980,S:1976_ann.phys} and it is tacitly related to the norm of a corresponding Birman-Schwinger operator. However, the emergence of second, third, etc. eigenvalues is not quite well-understood and the methods for finding the smallest coupling constant thresholds are not applicable in such problems. 
Moreover, it is quite difficult to express the thresholds in an explicit or implicit form. Therefore, it is not very surprising that the expansions of absorbing second, third, etc. eigenvalues in the continuum case have been obtained assuming a priori their existence (see e.g., \cite{H:1985_jot,KS:1980}).

In the current paper, for the  operator $\hat H_{a,b}(\mu)$   we explicitly find all coupling constant thresholds in terms of $a,b$ and $\hat\dispersion$. In particular, we provide a sufficient and necessary conditions for the existence of discrete spectrum (Theorem \ref{teo:existence_eigenv}). Moreover, as in \cite{H:1985_jot,LKL:2012.tmf,S:1976_ann.phys} we obtain convergent expansions for absorbing eigenvalues at the corresponding coupling constant thresholds (Theorems \ref{teo:depend_eigenv_I} and \ref{teo:depend_eigenv_II}). Hence our results not only justify the analogous expansions in continuum case, but also improve them providing the existence of eigenvalues. 

We remark that when $\hat H_0=-\hat \Delta$ in $\ell^2(\Z^2),$ the complete classification of the discrete spectrum of $\hat H_{a,b}(1)$ in terms of $a$ and $b$ has been established recently in \cite{HMK:2020,LKhKh:2021}. Our results generalize these existence results for a more general class of Hopping matrices (see Figure \ref{fig:dynamics} below).

Another problem related to the coupling constant threshold is the characterization of threshold resonances and thresholds eigenvalues of $\hat H_{\hat v}(\mu).$ Such problems are  especially important in the study of the Efimov and super-Efimov effects (see e.g., \cite{ALM:2004_ahp,BT:2017_ahp,G:2014,NE:2017_rep,NMS:2013,S:1993,T:2019,Y:1974} and the references therein). 

In the current paper, as we consider the discrete spectrum above the essential spectrum, we are interested in high energy resonances. Among various definitions, we mainly follow to \cite{AGK:1982_ahpnc,ALMM:2006_cmp,S:1993,Y:1979}: in the momentum representation, a resonance of energy $\emax,$ where $\emax$ is the top of the essential spectrum, is a nonzero solution $f$ of the eigenvalue equation $H_{a,b}(\mu) f= \emax f$ belonging to $L^1(\T^2)\setminus L^2(\T^2)$;  see also \cite{CS:2001_cmp,E:2013_pro,HMK:2020,L:1992_tmf} and the references therein for other notions of resonances. Using the momentum representation, in Theorem \ref{teo:thresholds} below we completely characterize the threshold eigenfunctions and threshold resonances, also finding them explicitly. It is worth to remark here that the threshold resonances and threshold eigenfunctions are the pointwise limits of eigenfunctions of $\hat H_{a,b}(\mu)$ associated to the absorbing eigenvalues as $\mu$ approaches to that threshold (Remark \ref{rem:origins}).

To prove the main results, we first identify the invariant subspaces of $\hat H_{a,b}(\mu).$ In the momentum representation, under Hypothesis \ref{hyp:main} below it turns out that the Hilbert subspaces of odd-symmetric, odd-antisymmetric, even-antisymmetric and even-symmetric functions in $L^2(\T^2)$ (see Section \ref{sec:main_results} for precise definitions) are  invariant w.r.t. $H_{a,b}(\mu),$ and so, we study the spectrum of $ H_{a,b}(\mu)$ reduced to these subspaces. As in \cite{LKhKh:2021,HMK:2020} we use the Fredholm determinants theory \cite{S:1976_ann.phys,S:1977_adv.math}  to find an implicit equation for the eigenvalues of $H_{a,b}(\mu)$. These equations will be the main tool in identifying the coupling constant thresholds and also in obtaining the convergent expansions of the eigenvalues. We remark that even though the techniques of finding expansions are similar to ones in the continuum \cite{H:1985_jot,KS:1980}, we have to work harder to select non-zero leading terms of the expansion of the determinants especially in the even symmetric subspace (see the proof of Proposition \ref{prop:zeros_delta_es}).

Finite rank property of $\hat V_{a,b}$ and Fredholm's determinant theory also allow to find the explicit form the eigenfunctions including also thresholds. In view of our analysis, odd-symmetric and  odd-antisymmetric eigenfunctions emerge from the threshold resonances, while even-antisymmetric eigenfunctions originates from threshold eigenfunctions. However, the case of even-symmetric eigenfunctions is quite subtle: here the \emph{effect of the Hopping matrice $\hat\dispersion$} comes into play. More precisely,  we can find two numbers $\Theta^*$ and $\Theta^{**}$ explicitly depending only on $\hat \dispersion$ such that if $\Theta^*a\ne\Theta^{**}b,$ then the eigenfunctions of $H_{a,b}(\mu)$ appear  from ``nothing'' (as the eigenfunctions  corresponding to the largest eigenvalue of $H_{a,b}(\mu)$), whereas if $\Theta^*a=\Theta^{**}b,$ then the eigenfunctions emerge from the threshold eigenfunctions (see Theorem \ref{teo:thresholds} and Remark \ref{rem:origins}). We recall that such an ``Hopping matrix effect'' does not exists in the case of discrete laplacian $-\hat \Delta$ (see Section \ref{subsec:discrete_laplace}) and this is the another motivation towards to study a more general classes of $\hat\dispersion.$

When the Hopping $\hat \dispersion(\cdot)$ is even in each coordinates (for instance in the case of discrete Laplacian), then the coupling constant thresholds related to the subspaces of odd-symmetric and odd-antisymmetric functions coincide. Then for any $b>0$ we can choose $a>0$ such that the essential spectrum ``gives birth'' simultaneously three eigenvalues  (Remark \ref{rem:thre_resonaaa}).

The structure of the paper is as follows. In Section \ref{sec:disc_shr_op} we introduce the operator $\hat H_{a,b}(\mu)$ in the coordinate and momentum spaces, and reduce some examples of Hopping matrices $\hat \dispersion$ satisfying Hypothesis \ref{hyp:main}. The main results of the paper -- Theorems \ref{teo:existence_eigenv}-\ref{teo:thresholds} are formulated in Section \ref{sec:main_results}. The main results will be proven in Section \ref{sec:proofs} and in Section \ref{sec:examples} we consider some examples. Finally, we conclude the paper with an appendix containing results on the expansion of some integrals.

\subsection*{Acknowledgment}

Sh.Kh. acknowledges support from the Austrian Science Fund (FWF) project P~33716. S.L. acknowledges support from the Foundation for Basic Research of the Republic of Uzbe\-kistan (Grant No. OT-F4-66).

\section{Discrete Schr\"odinger operator}\label{sec:disc_shr_op}

Let  $\Z^2$ be the two dimensional cubical lattice and
$\T^2:=(-\pi,\pi]^2$ be the two dimensional torus, the dual group to $\Z^2.$ We equip $\T^2$ with the Haar measure.
Let  $\ell^2(\Z^2)$ resp. $L^2(\T^2)$ be the Hilbert space  of square-summable resp. square-integrable functions defined on $\Z^2$  resp. $\T^2.$

In the coordinate space representation the energy operator $\hat H_{a,b}(\mu)$ of a one-particle system on the two-dimensional lattice $\Z^2$ with a potential field
\begin{equation*}
\hat v(x)=
\begin{cases}
a, & \text{if $x=0,$}\\
b, & \text{if $|x|=1,$}\\
0, & \text{if $|x|>1$}
\end{cases}
\end{equation*}
with $a,b\in \R\setminus\{0\},$ is defined as
\begin{equation*}
\hat H_{a,b}(\mu): =\hat H_0+\mu\widehat V_{a,b}, \qquad \mu\geq0,  \end{equation*}
where the free energy operator $\hat H_0$ is a Laurent-Toeplitz-type operator
\begin{equation*} \label{dispersion1}
\hat H_0 \hat f (x) = \sum\limits_{y\in\Z^2} \hat \dispersion(x-y)\hat f(y)
\end{equation*}
in $\ell^2(\Z^2),$ given by a \emph{Hopping matrix} $\hat\dispersion \in \ell^1(\Z^2)$ which satisfies $\hat \dispersion(x) = \overline{\hat \dispersion(-x)},$ and the potential energy operator is the multiplication in $\ell^2(\Z^2)$ by the function $\hat{v}.$  Note that $\hat{H}_\mu$ is a bounded self-adjoint operator.

In the momentum space representation, the operator acts in $L^2(\T)$ by
\begin{equation}\label{def:H_mu999}
H_{a,b}(\mu) = H_{0}+\mu V_{a,b} 
\end{equation}
where
$$
H_0:=\cF\hat H_0\cF^*,\quad V_{a,b}:={\mathcal{F}}\widehat{V}_{a,b}{\mathcal{F}}^*
$$
and
\begin{equation*}
\cF:\ell^2(\Z^2)\rightarrow L^2(\T^2),\quad
\cF\hat{f}(p)=\frac{1}{2\pi} \sum_{x\in\Z^2} \hat{f}(x) e^{\mathrm{i}(p,x)}
\end{equation*}
is the standard Fourier transform  with the inverse
\begin{equation*}
\cF^*:L^2(\T^2)\rightarrow \ell^2(\Z^2),\quad
\cF^*f(x)=\frac{1}{2\pi} \int_{\T^2} f(p) e^{-\mathrm{i}(p,x)}\d p.
\end{equation*}
The free Hamiltonian $H_0$ is the multiplication operator in $L^2(\T^2)$ by the function
\begin{equation*}
\dispersion:=2\pi \cF\hat\dispersion
\end{equation*}
so-called the \emph{dispersion relation} of the particle and the potential $V_{a,b}$ acts on $L^2(\T^2)$ as a convolution operator
\begin{equation*}
V_{a,b}f(p)= \frac{1}{2\pi} \int_{\T^2} v(p-q)f(q)dq
\end{equation*}
with the analytic on $\T^2$ kernel
\begin{align*}
v(p)=\cF\hat v(p)=&\frac{1}{2\pi} \sum_{x\in \Z^2} \hat v(x) e^{ip\cdot x} =\frac{1}{2\pi} \Big(a+2 b \sum_{i=1}^2\cos p_i\Big).
\end{align*}
Since $V_{a,b}$ is compact, by Weyl's Theorem \cite[Theorem XIII.14]{RS:vol.IV},
\begin{equation*}
\sigma_\ess(H_{a,b}(\mu)) =\sigma(H_0)=
[\dispersion_{\min},\dispersion_{\max}],\qquad \mu\ge0,
\end{equation*}
where
$$
\emin:= \min\,\dispersion,\quad \emax:=\max\,\dispersion.
$$

In what follows we always assume:

\begin{hypothesis}\label{hyp:main}
The dispersion relation $\dispersion$ is a real-valued even
function, symmetric with respect to coordinate permutations and having a non-degenerate unique maximum at $\vec{\pi}=(\pi,\pi)\in\T^2.$ Moreover, $\dispersion$ is analytic near $\vec\pi$.
\end{hypothesis}

\begin{remark}\label{rem:morse_lemma}
In view of Hypothesis \ref{hyp:main}
$$
\nabla \dispersion(\vec\pi) = \Big(\frac{\partial \dispersion}{\partial q_1}(\vec{\pi}),\frac{\partial \dispersion}{\partial q_2}(\vec{\pi})\Big)=(0,0)
$$
and the Hessian
$$
\nabla^2\dispersion(\vec\pi)
=
\begin{pmatrix}
\frac{\partial^2 \dispersion}{\partial q_1^2}(\vec{\pi}) & \frac{\partial^2 \dispersion}{\partial q_1\partial q_2}(\vec{\pi})  \\[2mm]
\frac{\partial^2 \dispersion}{\partial q_1\partial q_2}(\vec{\pi}) & \frac{\partial^2 \dispersion}{\partial q_2^2}(\vec{\pi})
\end{pmatrix}
$$
is strictly negative definite. By the symmetricity of $\dispersion$ 
$$
\frac{\partial^2 \dispersion}{\partial q_1^2}(\vec{\pi})=\frac{\partial^2 \dispersion}{\partial q_2^2}(\vec{\pi})<0\quad\text{and} \quad 
\Big|\frac{\p^2}{\p q_1^2}(\vec\pi)\Big| >
\Big|\frac{\p^2}{\p q_1\p q_2}(\vec\pi)\Big|.
$$
Moreover, by the Morse Lemma there exist a neighborhood  $U(\vec{\pi})\subset \T^2$  of  $\vec{\pi}\in\T^2$
and an analytic diffeomorphism $\psi:B_\gamma(0)\to U(\vec{\pi}),$
where $B_\gamma(0)\subset\R^2$ is
a ball in $\R^2$ of radius $\gamma\in(0,1)$ centered at the origin, such that   $\psi(0) = \vec{\pi}$ and
$$
 \dispersion(\psi(y)) = \dispersion_{\max} - y^2,\quad y\in B_\gamma(0).
$$
Moreover, the Jacobian   $J(\psi(y))$ of  $\psi$ are strictly positive in $B_\gamma(0).$  We write
\begin{equation}\label{jacobian}
J_0:=  \frac{1}{4\pi}\,J(\psi(0))>0.
\end{equation}
\end{remark}

Let us consider some examples.

\begin{example}\label{ex:discret_laplas}
Let
$$
\dispersion(p) = 2-\cos p_1 - \cos p_2
$$
so that $\hat H_0$ coincide with the discrete Laplacian
$$
\hat \Delta f(x) = - \frac12\sum\limits_{|s|=1} (f(x+s) - f(x))
$$
in $\ell^2(\Z^2).$
In this case $\emax=\dispersion (\vec{\pi})=4,$
$
\nabla \dispersion(\vec{\pi})=(0,0)
$
and
$
-\nabla^2\dispersion(\vec\pi)
$
is the identity. Moreover, the analytic function
$$
\psi(y_1,y_2)=\vec{\pi} - \Big(2\arcsin \frac{y_1}{\sqrt{2}},2\arcsin \frac{y_2}{\sqrt{2}}\Big)
$$
satisfies
$$
\dispersion(\psi(y))=4-y_1^2-y_2^2=4-y^2.
$$
The Jacobian of $\psi$ satisfies
$$
J(\psi(y))=\frac{4}{\sqrt{2-y_1^2}\sqrt{2-y_2^2}}>0\quad \text{and}\quad J(\psi(0))=2
$$
so that $J_0=\frac{1}{2\pi}.$
\end{example}

\begin{example} \label{ex:almost_laplas_sumo}
Let $\hat \dispersion\in\ell^1(\Z^2)$ be an even symmetric real-valued function such that

\begin{itemize}
 \item[(a1)] $\hat \dispersion(x)\le0$ resp. $\hat\dispersion(x)=0$ for all $x=(x_1,x_2)\in\Z^2$  with $x_1x_2$ is odd resp. even;

 \item[(a2)] $\hat \dispersion(x)<0$ if  $|x|=1;$

 \item[(a3)] there exists $C,\alpha>0$ such that $|\hat\dispersion(x)| \le Ce^{-\alpha|x|}$  for all $x\in\Z^2.$
\end{itemize}
Then $\dispersion$ is real-analytic and $\vec\pi$ is its non-degenerate unique maximum point
(see also \cite[Example 1.2]{KhLA:2021_jmaa}).
\end{example}

\begin{example}\label{ex:stran_epsilon}
Given $\epsilon\in(0,1),$ let
$$
\phi(t)=
\begin{cases}
-\cos t - \cos \epsilon &\text{if $t\in(-\pi,-\pi+\epsilon)\cup (\pi-\epsilon,\pi],$}\\
0 & \text{if $[-\pi+\epsilon,\pi-\epsilon]$}
\end{cases}
$$
and set
$$
\dispersion(p) = \phi(p_1) +\phi(p_2).
$$
Obviously, $\dispersion(\cdot)$ is a Lipschitz function, real-analytic near its non-degenerate unique maximum point $\vec\pi.$
Since the one-dimensional Fourier coefficients are defined as
$$
\hat\phi(n) =
\begin{cases}
2\sin\epsilon - 2\epsilon\cos\epsilon &\text{if $n=0,$}\\
- \epsilon & \text{if $|n|=1,$}\\
\frac{(-1)^n\sin(|n|-1)\epsilon}{|n|(|n|-1)} - \frac{(-1)^n\sin(|n|+1)\epsilon}{|n|(|n|+1)}& \text{if $|n|\ne 0,1,$}
\end{cases}
$$
$\hat\phi\in \ell^1(\Z)$ and hence, by the linearity and boundedness of the Fourier transform $
\hat\dispersion \in \ell^1(\Z^2).$
\end{example}

\section{Main results}\label{sec:main_results}

Let $L^{2,\mathrm{e}}(\T^2)$ and $L^{2,\mathrm{o}}(\T^2)$ be the subspaces of essentially even and essentially odd functions in $L^2(\T^2)$ and let
\begin{equation*}
\begin{aligned}
L^{2,\es}(\T^2):= & \{f\in L^{2,\mathrm{e}}(\T^2): f(p_1,p_2)=f(p_2,p_1) \,\,\text{for a.e}\,\,(p_1,p_2) \in \T^2\},\\
L^{2,\ea}(\T^2):= & \{f\in L^{2,\mathrm{e}}(\T^2): f(p_1,p_2)=-f(p_2,p_1) \,\,\text{for a.e}\,\,(p_1,p_2) \in \T^2\},\\
L^{2,\os}(\T^2):= & \{f\in L^{2,\mathrm{o}}(\T^2): f(p_1,p_2)=f(p_2,p_1) \,\,\text{for a.e}\,\,(p_1,p_2) \in \T^2\},\\
L^{2,\oa}(\T^2):= & \{f\in L^{2,\mathrm{o}}(\T^2): f(p_1,p_2)=-f(p_2,p_1)\,\,\text{for a.e}\,\, (p_1,p_2) \in \T^2\}
\end{aligned}
\end{equation*}
be the (Hilbert) subspaces of (essentially) even-symmetric, even-antisymmetric, odd-symmetric and odd-antisymmetric functions in $L^2(\T^2),$ respectively. Recall that
$$
L^2(\T^2) = L^{2,\os}(\T^2)\oplus L^{2,\oa}(\T^2)\oplus L^{2,\ea}(\T^2)\oplus L^{2,\es}(\T^2).
$$
Since $\dispersion$ is even and symmetric, for each $\omega\in\{\os,\oa,\ea,\es\}$ the subspace $L^{2,\omega}(\T^2)$  is invariant w.r.t. $H_0.$ Recalling that
\begin{align*}
V_{a,b}f(p) = \frac{a}{2\pi}\int_{\T^2}f(q)\d q & + \sum\limits_{i=1}^2
\frac{b}{2\pi}\int_{\T^2} (\cos p_1\cos q_1 + \cos p_2\cos q_2)f(q)\d q \\
& + \sum\limits_{i=1}^2 \frac{b}{2\pi}
\int_{\T^2} (\sin p_1\sin q_1 + \sin p_2\sin q_2)f(q)\d q
\end{align*}
and writing
\begin{align*}
2\cos p_1\cos q_1 + & 2\cos p_2\cos q_2 \\
=  &(\cos p_1 + \cos p_2)(\cos q_1 + \cos q_2) + (\cos p_1 - \cos p_2)(\cos q_1 - \cos q_2),\\
2\sin  p_1\sin  q_1 + & 2\sin  p_2\sin  q_2 \\
= & (\sin  p_1 + \sin  p_2)(\sin  q_1 + \sin  q_2) +  (\sin  p_1 - \sin  p_2)(\sin  q_1 - \sin  q_2)
\end{align*}
we get that each $L^{2,\omega}$ is invariant also w.r.t. $V_{a,b}.$ Therefore,
\begin{equation}\label{sigma_Hmu}
\sigma(H_{a,b}(\mu))= \bigcup_{\omega\in\{\os,\oa,\ea,\es\}} \sigma\big(H_{a,b}(\mu)\big|_{L^{2,\omega}(\T^2)}\big),
\end{equation}
where $A\big|_\cH$ is the restriction of a self-adjoint operator on the subspace $\cH.$
Thus, we study the discrete spectrum of $H_{a,b}(\mu)$ separately restricted to these subspaces.

\subsection{Existence of eigenvalues}
Let
\begin{subequations}
\begin{align}
\gamma_{\os}:= & \Big(\frac{1}{4\pi^2}\int_{\T^2}\frac{(\sin q_1 +\sin q_2)^2\, \d q}{\emax-\dispersion(q)}\Big)^{-1},\label{def:mu_os}\\
\gamma_{\oa}:= & \Big(\frac1{4\pi^2}\int_{\T^2}\frac{(\sin q_1 -\sin q_2)^2\, \d q}{\emax-\dispersion(q)}\Big)^{-1},
\label{def:mu_oa} \\
\gamma_{\ea}:= & \Big(\frac1{4\pi^2}\int_{\T^2}\frac{(\cos q_1 -\cos  q_2)^2\, \d q}{\emax-\dispersion(q)}\Big)^{-1}, \label{def:mu_ea} 
\\
\gamma_{\es}:= & \Big(\frac{1}{4\pi^2}\int_{\T^2} \frac{(\cos q_1 + \cos q_2+2)^2 \d q}{\emax - \dispersion(q)}\Big)^{-1}. \label{def:mu_es}
\end{align}
\end{subequations}
By Corollary \ref{cor:integral_finite00}  all integrals in \eqref{def:mu_os}-\eqref{def:mu_es} are finite and positive.
These numbers play an important role in the definition of coupling constant thresholds.

\begin{theorem}
\label{teo:existence_eigenv}
Given $a,b\in\R\setminus \{0\},$ let
$$
\mu_\omega^0:=
\begin{cases}
\tfrac{\gamma_\omega}{b} & \text{if $\omega \in\{\os,\oa,\ea\},$} \\
\tfrac{a+4b}{ab}\,\gamma_\omega & \text{if $\omega=\es$ and $\frac{a+4b}{ab}>0,$}\\
0 & \text{if $\omega=\es$ and $\frac{a+4b}{ab}\le 0,$}
\end{cases}
$$
where the positive numbers $\gamma_\omega$ are given by \eqref{def:mu_os}-\eqref{def:mu_es}.
\smallskip

(a) Let $\omega\in\{\os,\oa,\ea\}.$

\begin{itemize}
 \item Assume that either $b<0$ and $\mu>0$ or $b>0$ and $\mu\in(0,\mu_\omega^0].$
  Then $H_{a,b}(\mu)\big|_{L^{2,\omega}(\T^2)}$ has no eigenvalues above the essential spectrum.

 \item Assume that $b>0.$ Then for  $\mu>\mu_\omega^0$ then operator $H_{a,b}(\mu)\big|_{L^{2,\omega}(\T^2)}$ has a unique eigenvalue $E_\omega(\mu)>\emax$ and the associated eigenfunction is
 $$
  \Psi_\mu^{\omega}(p) =
  \begin{cases}
  \frac{\sin p_1 + \sin p_2}{E_\omega(\mu) - \dispersion(p)} & \text{if  $\omega=\os$},\\ 
  \frac{\sin p_1 - \sin p_2}{E_\omega(\mu) - \dispersion(p)} & \text{if $\omega=\oa,$ }\\ 
  \frac{\cos p_1 - \cos p_2}{E_\omega(\mu) - \dispersion(p)} & \text{if $\omega=\ea.$ }
  \end{cases}
  $$
 Moreover, $E_\omega(\cdot)$ is real-analytic, strictly increasing and strictly convex in $(\mu_\omega^0,+\infty).$ 
\end{itemize}

(b) Let $\omega=\es.$

\begin{itemize}
 \item Assume that either $a,b<0$ and $\mu>0$ or $ab<0$ and $\mu\in(0,\mu_{\es}^0].$ Then $H_{a,b}(\mu)\big|_{L^{2,\es}(\T^2)}$ has no eigenvalues above the essential spectrum.

 \item Assume that $ab<0$ Then for any $\mu>\mu_{\es}^0$ the  operator $H_{a,b}(\mu)\big|_{L^{2,es}(\T^2)}$ has a unique eigenvalue $E_{\es}(\mu)>\emax$ and the associated eigenfunction is
 $$
 \Psi_\mu^{\es} (p) = \frac{C_\mu^1 + C_\mu^2(\cos p_1 + \cos p_2)}{E_{\es}(\mu) - \dispersion(p)}
 $$
 for real some constants $C_\mu^1,C_\mu^2$ (see Remark \ref{rem:eigenvectorsss}). Moreover, $E_{\es}(\cdot)$ is is real-analytic, strictly increasing and  convex in $(\mu_\omega^0,+\infty).$

 \item Assume that $a,b>0.$ Then for any $\mu\in(0,\mu_{\es}^0]$ the operator $H_{a,b}(\mu)\big|_{L^{2,\es}(\T^2)}$ has a unique eigenvalue $E_{\es}^{(1)}(\mu)>\emax$ and for any $\mu>\mu_{\es}^0$ $H_{a,b}(\mu)\big|_{L^{2,\es}(\T^2)}$  has two eigenvalues $E_{\es}^{(1)}(\mu),E_{\es}^{(2)}(\mu)>\emax$, and the associated eigenfunctions are
 $$
 \Psi_\mu^{\es,i} (p) = \frac{C_\mu^{1,i} + C_\mu^{2,i}(\cos p_1 + \cos p_2)}{E_{\es}^i(\mu) - \dispersion(p)},\quad i=1,2,
 $$
 for some real constants $C_\mu^{1,i},C_\mu^{2,i}$ (see Remark \ref{rem:eigenvectorsss}). Moreover, $E_{\es}^{(1)}(\cdot)$ and $E_\es^{(2)}(\cdot)$ are real-analytic and strictly increasing in $(0,+\infty)$ and $(\mu_\omega^0,+\infty),$ respectively.
\end{itemize}
\end{theorem}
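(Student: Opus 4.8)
\medskip
\noindent\emph{Proof proposal.}
The plan is to treat $H_{a,b}(\mu)$ separately on each invariant subspace $L^{2,\omega}(\T^2)$, $\omega\in\{\os,\oa,\ea,\es\}$, exploiting that $V_{a,b}$ has finite rank. For $z>\emax$ the operator $z-H_0$ (multiplication by $z-\dispersion$) is boundedly invertible on every $L^{2,\omega}(\T^2)$, so $z$ is an eigenvalue of $H_{a,b}(\mu)\big|_{L^{2,\omega}(\T^2)}$ iff $f=\mu(z-H_0)^{-1}V_{a,b}f$ has a nonzero solution; such an $f$ is then forced to lie in $(z-H_0)^{-1}$ applied to the finite-dimensional range of $V_{a,b}\big|_{L^{2,\omega}(\T^2)}$, and substituting this ansatz reduces the eigenvalue equation to a finite linear system. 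First I would record, from the expansion of $V_{a,b}$ displayed before \eqref{sigma_Hmu}, that on $L^{2,\omega}(\T^2)$ with $\omega\in\{\os,\oa,\ea\}$ the potential collapses to the rank-one operator $g\mapsto\frac{b}{4\pi^2}\,N_\omega\langle N_\omega,g\rangle$, where $N_{\os}=\sin p_1+\sin p_2$, $N_{\oa}=\sin p_1-\sin p_2$, $N_{\ea}=\cos p_1-\cos p_2$ (the $a$-term and the complementary rank-one pieces annihilate functions of type $\omega$), whereas on $L^{2,\es}(\T^2)$ it becomes the rank-two operator $g\mapsto\frac1{4\pi^2}\big(a\langle\chi_1,g\rangle\chi_1+b\langle\chi_2,g\rangle\chi_2\big)$ with $\chi_1\equiv1$, $\chi_2=\cos p_1+\cos p_2$. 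This already forces the eigenfunctions into the stated forms $\Psi^\omega_\mu=N_\omega/(E_\omega(\mu)-\dispersion)$ and $\Psi^\es_\mu=(C^1_\mu+C^2_\mu\chi_2)/(E_\es(\mu)-\dispersion)$.

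For $\omega\in\{\os,\oa,\ea\}$ the reduced system is the scalar equation $h_\omega(z)=\tfrac1{\mu b}$, with
\[
h_\omega(z):=\frac1{4\pi^2}\int_{\T^2}\frac{N_\omega(q)^2}{z-\dispersion(q)}\,\d q=\frac1{4\pi^2}\big\langle(z-H_0)^{-1}N_\omega,N_\omega\big\rangle .
\]
By Corollary \ref{cor:integral_finite00} and monotone convergence, $h_\omega$ extends continuously to $z=\emax$ with $h_\omega(\emax)=1/\gamma_\omega\in(0,\infty)$; it is real-analytic on $(\emax,\infty)$, and differentiating under the integral gives $h_\omega'<0$ and $h_\omega(z)\to0$ as $z\to\infty$. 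Hence: if $b<0$ there is no solution, so no eigenvalue above $\emax$; if $b>0$ there is a (unique, by strict monotonicity) solution $z>\emax$ exactly when $\tfrac1{\mu b}<\tfrac1{\gamma_\omega}$, i.e.\ $\mu>\mu_\omega^0=\gamma_\omega/b$, and none for $\mu\le\mu_\omega^0$. Real-analyticity of $E_\omega=h_\omega^{-1}(\tfrac1{\mu b})$ comes from the implicit function theorem ($h_\omega'\neq0$) and strict monotonicity from $h_\omega$ being decreasing. For convexity, observe that when $E_\omega(\mu)$ is the unique eigenvalue above $\emax$ it equals $\sup\sigma\big(H_{a,b}(\mu)\big|_{L^{2,\omega}(\T^2)}\big)=\sup_{\|\psi\|=1}\langle H_{a,b}(\mu)\psi,\psi\rangle$, a supremum of functions affine in $\mu$, hence convex; \emph{strict} convexity then follows by differentiating $\mu b\,h_\omega(E_\omega(\mu))=1$ twice, which reduces it to the strict Cauchy--Schwarz inequality $2h_\omega'(z)^2<h_\omega(z)h_\omega''(z)$ for the Herglotz function $h_\omega$ (strict since $\dispersion$ is non-constant on $\{N_\omega\neq0\}$).

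The subspace $L^{2,\es}(\T^2)$ is the substantive case. Here $z>\emax$ is an eigenvalue iff $1/\mu$ is an eigenvalue of the $2\times2$ matrix $\tilde A(z)=\frac1{4\pi^2}\operatorname{diag}(a,b)\,\Gamma(z)$, where $\Gamma(z)=\big(\langle(z-H_0)^{-1}\chi_j,\chi_k\rangle\big)_{j,k=1,2}$ is the real-analytic, positive-definite, and---by differentiation---strictly $z$-decreasing Gram matrix of $\chi_1,\chi_2$; writing $I_0,I_1,I_2$ for its entries $\int\frac1{z-\dispersion}$, $\int\frac{\chi_2}{z-\dispersion}$, $\int\frac{\chi_2^2}{z-\dispersion}$, the eigenvalues of $\tilde A(z)$ are real with inertia that of $\operatorname{diag}(a,b)$, and each relevant branch, being a Rayleigh quotient $\frac1{4\pi^2}\,x^\top\Gamma(z)x/x^\top\operatorname{diag}(a,b)^{-1}x$ over a suitable cone, is strictly decreasing in $z$ with limit $0$ at $z=\infty$. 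The crux---where ``the effect of the Hopping matrix'' enters---is the behaviour as $z\searrow\emax$: I would use the exact identities
\[
I_1=Q-2I_0,\qquad I_2=P-4Q+4I_0,\qquad I_0I_2-I_1^2=I_0P-Q^2,
\]
with $P(z)=\int_{\T^2}\frac{(\chi_2+2)^2}{z-\dispersion}$, $Q(z)=\int_{\T^2}\frac{\chi_2+2}{z-\dispersion}$; since $\chi_2+2=\cos p_1+\cos p_2+2$ vanishes to second order at $\vec\pi$, Corollary \ref{cor:integral_finite00} gives that $P,Q$ have finite limits as $z\searrow\emax$ with $P(\emax)=4\pi^2/\gamma_\es>0$, while $I_0(z)\nearrow+\infty$ (the function $(\emax-\dispersion)^{-1}$ is not integrable in two dimensions). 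Consequently $\det\tilde A(z)=\tfrac{ab}{16\pi^4}(I_0P-Q^2)\to\sign(ab)\cdot\infty$ and $\operatorname{tr}\tilde A(z)=\tfrac1{4\pi^2}\big((a+4b)I_0+b(P-4Q)\big)$ tends to $\sign(a+4b)\cdot\infty$ if $a+4b\neq0$ and to a finite limit if $a+4b=0$; from these, elementary trace/determinant asymptotics identify $\lim_{z\searrow\emax}$ of each eigenvalue branch.

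The sign analysis then reads off the theorem. If $a,b<0$, both eigenvalues of $\tilde A(z)$ are negative, so $\tfrac1\mu>0$ is never attained and there is no eigenvalue. If $ab<0$, $\tilde A(z)$ has a single positive branch $\nu_+(z)$, with $\nu_+(z)\to+\infty$ when $a+4b\ge0$ (so $\mu_\es^0=0$) and $\nu_+(z)\to\frac{ab}{(a+4b)\gamma_\es}=1/\mu_\es^0$ when $a+4b<0$ (so $\mu_\es^0>0$); since $\nu_+$ is strictly decreasing onto $(0,\sup\nu_+)$ the equation $\nu_+(z)=\tfrac1\mu$ has a unique solution precisely for $\mu>\mu_\es^0$, yielding the unique eigenvalue $E_\es(\mu)$ (the only one above $\emax$, hence the top of the spectrum, hence convex). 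If $a,b>0$, both branches are positive: the larger $\nu_1$ decreases strictly onto $(0,\infty)$---so $\nu_1(z)=\tfrac1\mu$ always has a unique solution $E_\es^{(1)}(\mu)$---while the smaller $\nu_2$ decreases onto $(0,1/\mu_\es^0)$ (this is exactly where one needs $\nu_2(z)\to1/\mu_\es^0<\infty$ from the $P,Q$ analysis), so $\nu_2(z)=\tfrac1\mu$ is solvable iff $\mu>\mu_\es^0$, giving $E_\es^{(2)}(\mu)$ exactly then. Real-analyticity of each branch follows from analytic perturbation theory (the relevant eigenvalue of $\tilde A(z)$ is simple along the branch: automatic when $ab<0$ since $\det\tilde A<0$, and checked directly when $a,b>0$) together with $\nu'\neq0$, and the monotonicity assertions follow by implicit differentiation of $1=\mu\,\nu(E(\mu))$. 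The main obstacle throughout is this last subspace: one must recognise the renormalised finite quantities $P,Q$ in order to split the divergent $I_0$ off from $\operatorname{tr}\tilde A$ and $\det\tilde A$, and one must control the spectral branches of the \emph{non-symmetric} matrix $\tilde A(z)$ near $\emax$ through the Rayleigh quotient relative to the indefinite form $\operatorname{diag}(a,b)^{-1}$ --- which is precisely where $\gamma_\es$ and the threshold $\mu_\es^0$ enter.
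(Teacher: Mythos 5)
Your overall plan is sound and takes a genuinely different route from the paper on the substantive $\es$-subspace. Where the paper writes out the Fredholm determinant $\Delta_{a,b}(\mu;z)=\Delta_a^{(1)}\Delta_b^{(2)}-\mu^2ab(\Delta^{(3)})^2$ explicitly (Lemma \ref{lem:eigen_fred}) and then analyses its logarithmic expansion near $z=\emax$ together with a contradiction argument via the min-max bound and Kato--Rellich (Propositions \ref{prop:es_eigens_minmax}, \ref{prop:zeros_delta_es}, \ref{prop:analytic_eigen}), you instead pass to the $2\times2$ Birman--Schwinger matrix $\tilde A(z)=\tfrac1{4\pi^2}\mathrm{diag}(a,b)\Gamma(z)$ and track its two eigenvalue branches $\nu_\pm(z)$ directly. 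The identities $I_1=Q-2I_0$, $I_2=P-4Q+4I_0$, $I_0I_2-I_1^2=I_0P-Q^2$ you use to split off the divergent $I_0$ are precisely the renormalisation the paper performs when it observes $\tilde B_0=\gamma_\es^{-1}$ and $\tilde C_1=0$ in the expansion \eqref{maindetdelta_4}; your version is arguably cleaner because it identifies the finite limit $\nu_2(\emax^+)=ab/((a+4b)\gamma_\es)=1/\mu_\es^0$ of the smaller branch directly, rather than extracting it from the structure of the determinant.

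There is, however, one concrete gap. You assert that for $a,b>0$ simplicity of the relevant eigenvalue of $\tilde A(z)$ can be ``checked directly.'' It cannot: in Section \ref{subsec:multiple_eigen} the paper constructs a dispersion $\dispersion$ and parameters $a_0,b_0,\mu,z_0$ (with $a_0,b_0>0$) for which $\Delta^{(3)}(z_0)=\Delta_a^{(1)}(\mu;z_0)=\Delta_b^{(2)}(\mu;z_0)=0$, i.e.\ $z_0$ is a zero of multiplicity two, equivalently $\nu_1(z_0)=\nu_2(z_0)$. So the two branches of $\tilde A(z)$ genuinely can cross, and ``the larger'' and ``the smaller'' ordered eigenvalues then fail to be analytic. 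The correct fix is to note that $\tilde A(z)$ is similar to the symmetric matrix $D^{1/2}\Gamma(z)D^{1/2}$, to which Rellich's theorem on analytic families of self-adjoint matrices applies, producing two real-analytic eigenvalue branches (which may swap ordering at crossings); identifying $E_\es^{(1)}$ and $E_\es^{(2)}$ via the limits of these analytic branches at $z\searrow\emax$ (one tending to $+\infty$, the other to $1/\mu_\es^0$) then matches the paper's statement ``one can arrange'' in Proposition \ref{prop:analytic_eigen}(b). A second, smaller point: the phrase ``Rayleigh quotient over a suitable cone'' for the positive branch when $ab<0$ is nonstandard for an indefinite pencil. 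It is cleaner (and needed for strictness) to differentiate the pencil equation $\Gamma x=\nu D^{-1}x$, pair with $x$, and use $x^\top\Gamma x=\nu\,x^\top D^{-1}x>0$ for $\nu>0$ together with $\Gamma'<0$ to get $\nu'=x^\top\Gamma'x/(x^\top D^{-1}x)<0$; this confirms the monotonicity you need and replaces the loose Rayleigh-quotient language.

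Finally, the rank-one cases $\omega\in\{\os,\oa,\ea\}$ in your proposal coincide in substance with the paper's Proposition \ref{prop:existence_rank1} (which is cited to \cite[Theorem 2.1]{KhKhP:2020}), and your strict-convexity argument via the strict Cauchy--Schwarz inequality $2(h_\omega')^2<h_\omega h_\omega''$ is a correct and self-contained replacement for that citation.
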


For fixed $\mu>0,$ a schematic location of eigenvalues of $H_{a,b}(\mu)$ depending on $a$ and $b$ is drawn in Figure \ref{fig:dynamics}.

\begin{figure}[h!]
\includegraphics[width=0.6\textwidth]{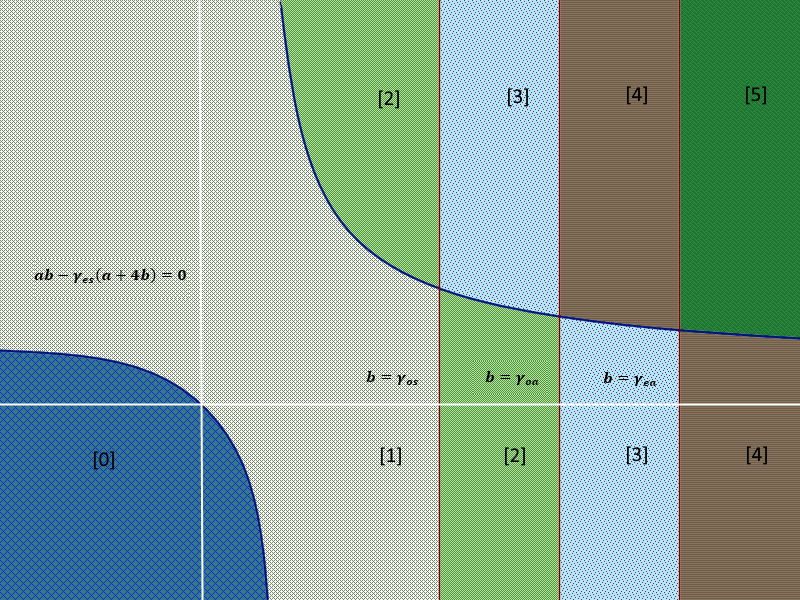}
\caption{A schematic dynamics of the change in the number of eigenvalues depending on parameters $a$ and $b$ for $\mu=1:$ in the region labelled with [n] the operator $H_{a,b}(1)$ has $n$ eigenvalues above the essential spectrum.}\label{fig:dynamics}
\end{figure}

\begin{remark}
In general we cannot state $E_{\es}^{(1)}(\mu)>E_{\es}^{(2)}(\mu)$ for all $\mu>\mu_{\es}^0$ (see Section \ref{subsec:multiple_eigen}). However, in the case of discrete Laplacian this inequality always holds (see Section \ref{subsec:discrete_laplace}).
\end{remark}

\subsection{Asymptotics of eigenvalues of {$H_{a,b}(\mu)$ }}
In this section we establish the absoption rate of eigenvalues of $H_{a,b}(\mu)$ by the essential spectrum.
First consider the case $\omega\in\{\os,\oa,\ea\}.$

\begin{theorem}\label{teo:depend_eigenv_I}
Let $\omega\in\{\os,\oa,\ea\}$ and $b>0.$  Let $E_\omega(\mu)>\emax$ be the unique eigenvalue of $H_{a,b}(\mu)\big|_{L^{2,\omega}(\T^2)}.$  Then for sufficiently small and positive $\mu-\mu_\omega^0$
the function $E_\omega(\mu) - \emax$ has a convergent expansion
$$
E_\omega(\mu) - \emax =
 \begin{cases}
  c_{\omega}\tau + \sum\limits_{n,m,k\geq0}c_{nmk}^\omega \lambda^{n}\tau^{m+1}\theta^k  &
 \text{if $\omega = \os,\oa,$}\\[3mm]
 c_{\omega}\lambda + \sum\limits_{n,m\ge0} c_{nm}^{\omega}\lambda^{n+1}\sigma^m  &  \text{if $\omega=\ea,$}
 \end{cases}
$$
where$\{c_{nmk}^{\os}\},$ $\{c_{nmk}^{\oa}\}$ and $\{c_{nm}^{\ea}\}$ are real coefficients,
$$
 \lambda:=\mu-\mu_\omega^0,\quad \tau:=\tfrac{\lambda}{-\ln\lambda},\quad
 \theta:=\tfrac{\ln\ln\lambda^{-1}}{-\ln\lambda},\quad \sigma:=-\lambda\ln\lambda,
$$
and
$$
c_\omega =
 \begin{cases}
 \frac{2}{b J_0\mu_\omega^2}\,\Big[ \Big(\frac{\p \psi}{\p y_1}(\vec{0})\Big)^2+\Big(\frac{\p \psi}{\p y_2}(\vec{0})\Big)^2\Big]^{-1} & \text{if $\omega=\oa,\os,$}\\[2mm]
 \frac{1}{b\,\mu_{\omega}^2}\Big(\frac{1}{4\pi^2}\int_{\T^2}\frac{(\cos q_1 -\cos q_2)^2\, \d q}{(\emax-\dispersion(q))^2}\Big)^{-1}
 & \text{if $\omega=\ea$}
 \end{cases}
$$
is a finite positive real number, here $\psi$ and $J_0$ are as in Remark \ref{rem:morse_lemma}.
\end{theorem}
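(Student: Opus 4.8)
The plan is to reduce the eigenvalue problem on each invariant subspace to a scalar (Birman--Schwinger) equation, to expand the corresponding integral near $\emax$, and then to invert the resulting transcendental relation.

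First I would use the fact, already established in the proof of Theorem~\ref{teo:existence_eigenv}, that for $\omega\in\{\os,\oa,\ea\}$ the operator $V_{a,b}$ restricted to $L^{2,\omega}(\T^2)$ has rank one, with range spanned by $g_\os(q)=\sin q_1+\sin q_2$, $g_\oa(q)=\sin q_1-\sin q_2$, $g_\ea(q)=\cos q_1-\cos q_2$ respectively. Hence $E>\emax$ is an eigenvalue of $H_{a,b}(\mu)\big|_{L^{2,\omega}(\T^2)}$ if and only if
\begin{equation*}
\mu\,b\,\Delta_\omega(E)=1,\qquad \Delta_\omega(E):=\frac{1}{4\pi^2}\int_{\T^2}\frac{g_\omega(q)^2}{E-\dispersion(q)}\,\d q .
\end{equation*}
By Corollary~\ref{cor:integral_finite00} the function $\Delta_\omega$ is real-analytic, strictly decreasing and strictly convex on $(\emax,+\infty)$ and $\Delta_\omega(E)\to 1/\gamma_\omega=1/(b\mu_\omega^0)$ as $E\searrow\emax$; this is precisely why $\mu_\omega^0$ is the coupling-constant threshold, and $E_\omega(\mu)$ is the unique root of $\Delta_\omega(E)=1/(\mu b)$ for $\mu$ slightly above $\mu_\omega^0$, so everything reduces to the local behaviour of this root.

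Next I would expand $\Delta_\omega$ at $\emax$. Put $\kappa:=E-\emax>0$ and split $\T^2$ into the Morse neighbourhood $U(\vec\pi)$ of Remark~\ref{rem:morse_lemma} and its complement; on the complement $\emax-\dispersion$ is bounded away from $0$, so that part is real-analytic in $\kappa$ at $0$, while on $U(\vec\pi)$ the change of variables $q=\psi(y)$ turns the singular part into $\frac{1}{4\pi^2}\int_{B_\gamma(0)}\frac{w_\omega(y)}{\kappa+|y|^2}\,\d y$ with $w_\omega(y):=g_\omega(\psi(y))^2 J(\psi(y))$ real-analytic on $\overline{B_\gamma(0)}$ and vanishing at $0$ to order $2$ when $\omega\in\{\os,\oa\}$ (since $\nabla g_\os(\vec\pi),\nabla g_\oa(\vec\pi)\neq 0$) and to order $4$ when $\omega=\ea$ (since $\nabla g_\ea(\vec\pi)=0$ but its Hessian at $\vec\pi$ is not). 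Feeding this into the integral expansions of the Appendix gives, for small $\kappa>0$, a convergent expansion
\begin{equation*}
\Delta_\omega(\emax+\kappa)=\frac{1}{\gamma_\omega}+\kappa\,A_\omega(\kappa)+\kappa\ln\kappa\,B_\omega(\kappa)
\end{equation*}
with $A_\omega,B_\omega$ real-analytic near $0$: here $B_\omega(0)\neq 0$ for $\omega\in\{\os,\oa\}$ (a positive multiple of $J_0$ times the non-degenerate quadratic part of $w_\omega$ at $0$), whereas $B_\ea(0)=0$ while $A_\ea(0)=\Delta_\ea'(\emax)=-\frac{1}{4\pi^2}\int_{\T^2}\frac{(\cos q_1-\cos q_2)^2}{(\emax-\dispersion(q))^2}\,\d q<0$, the integral being finite since near $\vec\pi$ both numerator and denominator vanish to order $4$.

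Finally I would invert, writing $\lambda=\mu-\mu_\omega^0$ and $\frac{1}{\mu b}=\frac{1}{\gamma_\omega+\lambda b}=\frac{1}{\gamma_\omega}+\lambda C_\omega(\lambda)$ with $C_\omega$ real-analytic, $C_\omega(0)=-b/\gamma_\omega^2$, so that the equation becomes $\kappa A_\omega(\kappa)+\kappa\ln\kappa\,B_\omega(\kappa)=\lambda C_\omega(\lambda)$. For $\omega=\ea$, since $B_\ea(0)=0$ we write $B_\ea(\kappa)=\kappa\widetilde B_\ea(\kappa)$, factor out one $\kappa$, set $\kappa=\lambda\rho$, and use $\lambda\ln\lambda=-\sigma$ to obtain $\rho\bigl(A_\ea(\lambda\rho)+(\lambda\rho\ln\rho-\rho\sigma)\widetilde B_\ea(\lambda\rho)\bigr)=C_\ea(\lambda)$; this is real-analytic in $(\rho,\lambda,\sigma)$ near $(c_\ea;0,0)$ with nonzero $\rho$-derivative $A_\ea(0)$ there, so the analytic implicit function theorem gives $\rho=\rho(\lambda,\sigma)$ analytic, $\rho(0,0)=c_\ea=-C_\ea(0)/A_\ea(0)=\frac{1}{b(\mu_\ea^0)^2}\bigl(\frac{1}{4\pi^2}\int_{\T^2}\frac{(\cos q_1-\cos q_2)^2}{(\emax-\dispersion)^2}\bigr)^{-1}$, and hence $E_\ea(\mu)-\emax=\lambda\rho(\lambda,\sigma)$ has the claimed convergent expansion with leading term $c_\ea\lambda$. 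For $\omega\in\{\os,\oa\}$, where $B_\omega(0)\neq 0$, the leading balance $B_\omega(0)\,\kappa\ln\kappa\sim-\frac{b}{\gamma_\omega^2}\lambda$ forces $\kappa\sim c_\omega\tau$ with $c_\omega=b/(\gamma_\omega^2 B_\omega(0))$; substituting $\kappa=\tau\rho$ and using $\ln\kappa=\ln\lambda-\ln(-\ln\lambda)+\ln\rho$ together with $\tau\ln\lambda=-\lambda$ and $\tau\ln(-\ln\lambda)=\lambda\theta$, one arrives after dividing by $\lambda$ at an equation for $\rho$ that is real-analytic in $\rho$ and in the auxiliary small quantities around the base point with nonzero $\rho$-derivative, and the analytic implicit function theorem then yields $E_\omega(\mu)-\emax$ as a convergent series of the stated shape $c_\omega\tau+\sum_{n,m,k\ge0}c_{nmk}^\omega\lambda^n\tau^{m+1}\theta^k$. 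Throughout, that the implicitly defined $\kappa(\mu)$ is positive and tends to $0$ as $\lambda\searrow 0$ follows from the strict monotonicity of $\Delta_\omega$ already used in Theorem~\ref{teo:existence_eigenv}, and the explicit value of $c_\omega$ in the cases $\os,\oa$ is obtained by evaluating $B_\omega(0)$ in the Morse coordinates of Remark~\ref{rem:morse_lemma}.

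The main obstacle is this last inversion in the cases $\omega\in\{\os,\oa\}$: organizing the iteration so that the solution is a genuinely \emph{convergent} multiple power series in $\lambda,\tau,\theta$ (not merely an asymptotic one), and checking that precisely the monomials $\lambda^n\tau^{m+1}\theta^k$ occur, which requires a careful choice of the auxiliary variables in which the implicit function theorem is applied; here I would follow the scheme developed for $-\Delta+\mu V$ in $\R^2$ in \cite{H:1985_jot} and adapted to lattice operators in \cite{LKL:2012.tmf}. The case $\omega=\ea$ is comparatively soft because $B_\ea(0)=0$ makes the leading correction linear in $\lambda$; and the non-vanishing of the leading coefficients ($B_\omega(0)\neq 0$ for $\omega\in\{\os,\oa\}$, $A_\ea(0)\neq 0$) is routine, reducing to $\nabla g_\omega(\vec\pi)\neq 0$ (respectively the Hessian of $g_\ea$ at $\vec\pi$ being nonzero) together with $J_0>0$ from Remark~\ref{rem:morse_lemma}.
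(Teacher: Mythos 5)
Your proposal follows essentially the same route as the paper: on each invariant subspace $L^{2,\omega}(\T^2)$ the perturbation is rank one, so the eigenvalue condition reduces to the scalar Fredholm-determinant equation $\mu b\,\Delta_\omega(E)=1$; expanding the integral near $\emax$ via Morse coordinates (Proposition~\ref{prop:kernel_asymptotics_max} and Corollary~\ref{cor:integral_finite00}) and then inverting by the singular substitutions $\kappa=\tau\rho$ (for $\omega\in\{\os,\oa\}$) or $\kappa=\lambda\rho$ (for $\omega=\ea$) together with the analytic implicit function theorem is precisely the content of Proposition~\ref{prop:asym_rank1}, cases (b) and (c), applied with $v=v_\omega$. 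One bookkeeping slip: the base-point relation $\rho\,A_{\ea}(0)=C_{\ea}(0)$ gives $c_{\ea}=C_{\ea}(0)/A_{\ea}(0)$ without the extra minus sign you wrote; since both $A_{\ea}(0)$ and $C_{\ea}(0)$ are negative, your stated numerical value of $c_{\ea}$ is nonetheless correct.
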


Next we study the case $\omega=\es.$ Let us introduce first the following real numbers:
\begin{subequations}
\begin{align}
& \Theta^*:=  \frac{1}{4\pi^2}\int_{\T^2} \frac{(2+\cos p_1 + \cos p_2)\d p}{\emax- \dispersion(p)},\label{theta_mayor}\\
& \Theta^{**}:=  \frac{1}{4\pi^2} \int_{\T^2} \frac{2(\cos p_1+\cos p_2)(2+\cos p_1 + \cos p_2)\d p}{\emax- \dispersion(p)},\label{theta_podpolkovnik}\\
& \kappa_1:= \frac{1}{4\pi^2} \int_{\T^2} \frac{[4-(\cos p_1 +\cos p_2)^2]\d p}{\emax - \dispersion(p)}, \label{kappa1}
\end{align}
\end{subequations}
Note that by Corollary \ref{cor:integral_finite00} all these numbers are finite and by the nonnegativity of corresponding integrands, $\Theta^*$ and $\kappa_1$ are positive.

\begin{theorem}\label{teo:depend_eigenv_II}
Let $E_{\es}(\cdot),E_{\es}^{(1)}(\cdot)$ and $E_{\es}^{(2)}(\cdot)$ be  as in Theorem \ref{teo:existence_eigenv} (b).
\smallskip

\noindent
(a) Assume that either $\frac{a+4b}{ab}\le0$ or $a,b>0$ and let $X_{a,b}:(0,+\infty)\to \R$ be defined as
\begin{equation}\label{def:Xab09}
X_{a,b}(\mu)=
\begin{cases}
E_{\es}(\mu)  & \text{if $\frac{a + 4b}{ab}\le0$,}\\
E_{\es}^{(1)}(\mu) & \text{if $a,b>0.$}
\end{cases}
\end{equation}
Then for sufficiently small and positive $\mu$
the function $X_{a,b}(\mu) - \emax$ has a convergent expansion
\begin{align}\label{esX000}
X_{a,b}(\mu) - \emax
=
 \begin{cases}
 c_{\es} \, e^{-\frac{1}{J_0(a+4b) \mu}}
+\sum\limits_{\substack{n,m\geq1,\\ n+m\ge3}} c_{nm}^{(1)}\, \mu^n\Big(\frac{1}\mu e^{-\frac{1}{J_0(a+4b) \mu}}\Big)^m  & \text{if $a+4b>0,$}\\[3mm]
c_{\es}\,e^{\frac{1+ b{ \kappa_1} \mu}{J_0\gamma_\es^{-1} ab  \mu^2}}+\sum\limits_{\substack{n,m\geq1,\\ n+m\ge 3}}c_{nm}^{(2)}\, \mu^{n+2}\Big(\frac{1}{\mu^3}e^{\frac{1+ b\kappa_1 \mu}{J_0\gamma_\es^{-1} ab \mu^2}}\Big)^m & \text{if $a+4b=0,$}
\end{cases}
\end{align}
where $J_0$ is given by \eqref{jacobian}, $\{c_{nm}^{(1)}\}$ and $\{c_{nm}^{(2)}\}$  are real coefficients,
$$
 c_{\es} =
 \begin{cases}
 e^{\frac{\beta_1a^2 + \beta_2ab  +\beta_3 b^2}{(a+4b)^2}} & \text{if $a+4b>0,$}\\
 e^{\beta_0} & \text{if $a+4b=0,$}
 \end{cases}
$$
and $\beta_i $ are some universal coefficients depending only on $\dispersion.$
\smallskip

\noindent
(b) Assume that $\frac{a+4b}{ab}>0$ and let $Y_{a,b}:(\mu_\es^0,+\infty)\to \R$ be defined as
\begin{equation}\label{def:Yab09}
Y_{a,b}(\mu)=
\begin{cases}
E_{\es}(\mu)  & \text{if $ab<0$ and $a+4b<0$,}\\
E_{\es}^{(2)}(\mu) & \text{if $a,b>0.$}
\end{cases}
\end{equation}
Then for sufficiently small and positive $\mu-\mu_{\es}^0$
the function $Y_{a,b}(\mu) - \emax$ has the following convergent expansions:
\begin{itemize}
\item if $\Theta^*a\ne \Theta^{**}b,$ then
\begin{equation}\label{esY000}
Y_{a,b}(\mu) - \emax =  c_{\es} \, e^{-\frac{\Lambda}{\mu-\mu_{\es}^0}} + \sum\limits_{\substack{n,m\geq1,\\ n+m\ge3}} c_{nm}^{(3)}\, (\mu-\mu_\es^0)^n\Big(\tfrac{1}{\mu-\mu_{\es}^0} e^{-\frac{\Lambda}{\mu-\mu_{\es}^0}}\Big)^m,
\end{equation}
where
$$
\Lambda:=\frac{\gamma_\es^2(\Theta^*a-\Theta^{**}b)^2}{J_0ab(a+4b)}>0,\quad c_{\es} = e^{\beta_0' + \frac{\beta_2' a + \beta_3'b}{a+4b} + \frac{\beta_4'a^2 + \beta_5'ab  +\beta_6' b^2}{(a+4b)^2}}>0,
$$
$\{c_{nm}^{(3)}\}$  are real coefficients 
and $\{\beta_i'\}$ are coefficients depending only on $\dispersion;$

 \item if $\Theta^*a=\Theta^{**}b,$ then
 sufficiently small and positive $\mu - \mu_{\es}^0>0$  the function $Y_{a,b}(\mu) - \emax$ has a convergent expansion
\begin{equation}\label{esY001}
 Y_{a,b}(\mu) - \emax =
 c_{\es} \lambda + \sum\limits_{n,m,k\ge0} c_{nmk}\lambda^{n+1}\eta^m\sigma^k,
\end{equation}
 where
 $$
 \lambda: = \mu-\mu_{\es},\quad \eta:=\frac{1}{-\ln\lambda},\quad \sigma:=-\lambda\ln\lambda,
 $$
 the coefficients $\{c_{nmk}\}$ are real and
 $$
 c_{\es}:= \frac{ab}{(a+4b)\gamma_\es^2}\Big(\frac{1}{4\pi^2}\int_{\T^2} \frac{(2 + \cos p_1+\cos p_2)^2\d p}{(\emax-\dispersion(p))^2}\Big)^{-1}>0.
 $$
\end{itemize}
\end{theorem}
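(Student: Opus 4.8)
The plan is to reduce the whole statement to an implicit-function analysis of a single Fredholm determinant. By (the proof of) Proposition~\ref{prop:zeros_delta_es}, in $L^{2,\es}(\T^2)$ the operator $\mu V_{a,b}$ has rank two, with range spanned by $1$ and $w(p):=\cos p_1+\cos p_2$, so for $E>\emax$ the number $E$ is an eigenvalue of $H_{a,b}(\mu)\big|_{L^{2,\es}(\T^2)}$ iff $\Delta_{\es}(\mu,E)=0$, where, writing
$$
I_j(E):=\frac1{4\pi^2}\int_{\T^2}\frac{w(q)^j\,\d q}{E-\dispersion(q)}\qquad(j=0,1,2),
$$
one has, up to a non-vanishing factor,
$$
\Delta_{\es}(\mu,E)=\bigl(1-\mu a\,I_0(E)\bigr)\bigl(1-\mu b\,I_2(E)\bigr)-\mu^2ab\,I_1(E)^2 .
$$
By Theorem~\ref{teo:existence_eigenv}(b), each of $X_{a,b}(\mu)$ and $Y_{a,b}(\mu)$ is the unique root of $\Delta_{\es}(\mu,\cdot)$ in $(\emax,+\infty)$ tending to $\emax$ as $\mu\searrow0$, resp. as $\mu\searrow\mu_{\es}^0$ (and, by the same theorem, the relevant branch exists only when $ab<0$ or $a,b>0$, so in case (a) necessarily $a+4b\ge0$). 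Thus it suffices to solve $\Delta_{\es}(\mu,E)=0$ asymptotically.

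The first step is the threshold expansion of $I_j$. Put $\tau:=E-\emax>0$ and $L:=-\ln\tau$. Feeding the Morse chart $\psi$ of Remark~\ref{rem:morse_lemma} (with $\dispersion(\psi(y))=\emax-y^2$ near $\vec\pi$) into the integral expansions of the Appendix gives, for each $j$, a convergent expansion
$$
I_j(E)=J_0\,w(\vec\pi)^j\,L+A_j+(\text{a convergent series in }\tau\text{ and }\tau L),\qquad w(\vec\pi)=-2,
$$
with $A_j\in\R$ the finite parts. Since $w(\vec\pi)^0\,w(\vec\pi)^2=(w(\vec\pi))^2$, the $L^2$–term of $\Delta_{\es}$ cancels identically, and
$$
\Delta_{\es}(\mu,E)=c_1(\mu)\,L+c_0(\mu)+\tilde c_1(\mu)\,\tau L+\tilde c_0(\mu)\,\tau+(\text{higher order terms}),
$$
with $c_1(\mu):=-\mu J_0\bigl[(a+4b)-\mu ab\,\gamma_{\es}^{-1}\bigr]$ and $c_0(\mu):=(1-\mu aA_0)(1-\mu bA_2)-\mu^2abA_1^2$; the $\tau L^2$–term also drops out, by the same mechanism that makes $4I_0+4I_1+I_2$ converge together with its $E$–derivative at $\emax$. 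Here I use that the weights $2+w,\ 2w(2+w),\ 4-w^2,\ (2+w)^2$ all vanish at $\vec\pi$, which renders the corresponding weighted integrals over $\{E=\emax\}$ convergent and yields the linear relations
$$
2A_0+A_1=\Theta^*,\quad 2A_1+A_2=\tfrac12\Theta^{**},\quad 4A_0-A_2=\kappa_1,\quad 4A_0+4A_1+A_2=\gamma_{\es}^{-1}
$$
(hence also $\Theta^{**}+\kappa_1=\gamma_{\es}^{-1}$ and $\Theta^{**}=4\Theta^*-2\kappa_1$), and identifies the $L$–coefficient of $I_0I_2-I_1^2$ as $J_0\gamma_{\es}^{-1}$, whence the displayed $c_1$.

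Now I would solve $\Delta_{\es}=0$ in each regime. \emph{Part (a), $\mu\searrow0$.} If $a+4b>0$, then $c_1(\mu)/\mu\to-J_0(a+4b)\ne0$ and $c_0(0)=1$, so $L=\bigl(J_0(a+4b)\mu\bigr)^{-1}+\mathrm{const}+O(\mu)$ and the $\tau$–terms are exponentially small in $1/\mu$; writing $\tau=e^{-L}$ and iterating recasts $\Delta_{\es}=0$ as an analytic fixed point in the pair $\bigl(\mu,\ \tfrac1\mu e^{-1/(J_0(a+4b)\mu)}\bigr)$, which by the analytic implicit function theorem gives \eqref{esX000}, with $c_{\es}=\exp\!\bigl((\beta_1a^2+\beta_2ab+\beta_3b^2)/(a+4b)^2\bigr)$ after homogenising the $\mu$–independent part of $L$. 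If $a+4b=0$ (so $ab=-4b^2<0$), then $c_1(\mu)=\mu^2J_0ab\,\gamma_{\es}^{-1}$ vanishes to second order and $aI_0+bI_2\to-b\kappa_1$, so the balance is $\mu^2J_0ab\,\gamma_{\es}^{-1}L=1+\mu b\kappa_1+O(\mu^2)$ and the same scheme in $\bigl(\mu,\ \tfrac1{\mu^3}e^{(1+b\kappa_1\mu)/(J_0\gamma_{\es}^{-1}ab\mu^2)}\bigr)$ gives \eqref{esX000} with $c_{\es}=e^{\beta_0}$. \emph{Part (b), $\mu\searrow\mu_{\es}^0=\tfrac{(a+4b)\gamma_{\es}}{ab}$.} Here $\mu_{\es}^0$ is precisely the simple zero of $c_1$, and substituting $A_1=\Theta^*-2A_0$, $A_2=4A_0-\kappa_1$ into $c_0(\mu_{\es}^0)$ and using $\Theta^{**}+\kappa_1=\gamma_{\es}^{-1}$, $\Theta^{**}=4\Theta^*-2\kappa_1$ (the $A_0$–dependence cancels) gives the key identity
$$
ab\,c_0(\mu_{\es}^0)=-\gamma_{\es}^2\,(\Theta^*a-\Theta^{**}b)^2 .
$$
If $\Theta^*a\ne\Theta^{**}b$, then $c_0(\mu_{\es}^0)\ne0$; writing $\lambda:=\mu-\mu_{\es}^0$ and $c_1(\mu)=J_0(a+4b)\lambda+O(\lambda^2)$ one gets $L=\Lambda/\lambda+\mathrm{const}+O(\lambda)$ with $\Lambda=\gamma_{\es}^2(\Theta^*a-\Theta^{**}b)^2/(J_0ab(a+4b))>0$ (the identity above also gives $\Lambda>0$, since $ab(a+4b)>0$ in this regime), the $\tau$–terms are again exponentially small, and the fixed point in $\bigl(\lambda,\ \tfrac1\lambda e^{-\Lambda/\lambda}\bigr)$ produces \eqref{esY000}. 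If $\Theta^*a=\Theta^{**}b$, then $c_0(\mu_{\es}^0)=0$ as well, so both the $L$– and the $L^0$–coefficient of $\Delta_{\es}$ vanish at $\mu_{\es}^0$; the leading balance of $\Delta_{\es}=0$ is then between $c_1(\mu)L\sim J_0(a+4b)(-\lambda\ln\lambda)$ and $\tilde c_1(\mu_{\es}^0)\,\tau L$, forcing $\tau=c_{\es}\lambda(1+o(1))$ with $c_{\es}=-J_0(a+4b)/\tilde c_1(\mu_{\es}^0)$. Since $\tilde c_1(\mu_{\es}^0)$ — the $\tau L$–coefficient of $\Delta_{\es}$ at $\mu_{\es}^0$ — is, by the same algebra of the $A_j$–relations together with the boundedness of $(2+w)^2/(\emax-\dispersion)^2$ near $\vec\pi$, a multiple of $\tfrac1{4\pi^2}\int_{\T^2}(2+w(p))^2(\emax-\dispersion(p))^{-2}\,\d p$, one gets $c_{\es}=\tfrac{ab}{(a+4b)\gamma_{\es}^2}\bigl(\tfrac1{4\pi^2}\int_{\T^2}(2+w(p))^2(\emax-\dispersion(p))^{-2}\,\d p\bigr)^{-1}$; substituting $\tau=c_{\es}\lambda(1+\cdots)$ back turns the equation into an analytic fixed point in $\lambda$, $\eta:=1/(-\ln\lambda)$ and $\sigma:=-\lambda\ln\lambda$, giving \eqref{esY001}.

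The main obstacle is this degenerate case $\Theta^*a=\Theta^{**}b$ — the ``Hopping matrix effect'' of the introduction. There the term that controls the determinant in the generic case, $c_1(\mu)L$, becomes subleading and must be weighed against the first genuinely $\tau$–dependent term; one therefore has to isolate the correct non-vanishing leading coefficient $\tfrac1{4\pi^2}\int_{\T^2}(2+w(p))^2(\emax-\dispersion(p))^{-2}\,\d p$, and to verify $ab\,c_0(\mu_{\es}^0)=-\gamma_{\es}^2(\Theta^*a-\Theta^{**}b)^2$ precisely enough that the exponential law \eqref{esY000} degenerates continuously into the linear law \eqref{esY001} exactly along $\Theta^*a=\Theta^{**}b$. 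Extracting the stated closed forms of the universal constants $\beta_i$, $\beta_i'$ and of $\Lambda$ is then lengthy but routine bookkeeping of the Appendix expansions, and in every regime convergence of the displayed series follows from the analytic implicit function theorem applied to the corresponding fixed-point equation, together with the analyticity and monotonicity of the eigenvalue branches already recorded in Theorem~\ref{teo:existence_eigenv}.
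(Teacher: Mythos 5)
Your proposal follows essentially the same route as the paper's proof: expand the Fredholm determinant $\Delta_\es(\mu,E)$ near $E=\emax$ (using the Appendix), isolate the dominant balance in each regime, and apply the analytic implicit function theorem after a singular change of variables. Your identities $\Theta^*=2A_0+A_1$, $\Theta^{**}=2(2A_1+A_2)$, $\kappa_1=4A_0-A_2$, $\gamma_\es^{-1}=4A_0+4A_1+A_2$, the cancellation of $L^2$ and $\tau L^2$ terms, the identity $ab\,c_0(\mu_\es^0)=-\gamma_\es^2(\Theta^*a-\Theta^{**}b)^2$, and the computation of $c_\es$ in the degenerate case $\Theta^*a=\Theta^{**}b$ all match the paper's \eqref{maindetdelta_4}, \eqref{ozod_had_hollll}, \eqref{pos_consta8191} after translating between your finite parts $A_j$ and the paper's coefficients $w_0^{(2)},\hat w_0^{(2)},\tilde w_0^{(2)}$, etc. One small slip: in the case $a+4b=0$ your displayed balance $\mu^2J_0ab\gamma_\es^{-1}L=1+\mu b\kappa_1+\cdots$ is missing a minus sign (the correct balance $c_1(\mu)L=-c_0(\mu)$ yields $L>0$ because $ab<0$), but your final substitution $\tfrac1{\mu^3}e^{(1+b\kappa_1\mu)/(J_0\gamma_\es^{-1}ab\mu^2)}$ has the correct sign, so the error is only in the intermediate line.
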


\subsection{Threshold eigenfunctions and threshold resonances}

Consider the equation 
\begin{equation}\label{eigen_equation}
H_{a,b}(\mu_\omega^0) f= \emax f
\end{equation}
for $\omega\in\{\os,\oa,\ea,\es\}$ and 
for some nonzero $f\in L^{2,\omega}(\T^d)$ provided $\mu_{\omega}>0.$ Since the domain of the operator $H_{a,b}(\mu)$  can be extended to $L^1(\T^2)$ preserving boundedness, we look for a (generalized) solution of  \eqref{eigen_equation} in $L^{1,\omega}(\T^2).$
Recall that \cite{ALM:2004_ahp} a solution $f\in L^{1,\omega}(\T^2)$ of \eqref{eigen_equation} is called \emph{threshold resonance} if $f\notin L^{2,\omega}(\T^2).$ Any solution $f\in L^{2,\omega}(\T^2)$ of \eqref{eigen_equation} is called \emph{threshold eigenfunction}. Next we study threshold resonances and threshold eigenfunctions of $H_{a,b}(\mu)$.

\begin{theorem}\label{teo:thresholds}
(a) Assume that $b>0$ and $\omega\in \{\os,\oa\}.$ Then the function
$$
 \Phi_{\omega}(p) =
  \begin{cases}
  \frac{\sin p_1 + \sin p_2}{\emax - \dispersion(p)} & \text{if  $\omega=\os$},\\
  \frac{\sin p_1 - \sin p_2}{\emax - \dispersion(p)} & \text{if $\omega=\oa$ }
  \end{cases}
$$
belongs to $L^{1,\omega}(\T^2)\setminus L^{2,\omega}(\T^2)$ and solves \eqref{eigen_equation}.
\medskip

(b) Assume that $b>0$ and $\omega=\ea.$ Then the function
$$
\Phi_{\ea}(p)=  \dfrac{\cos p_1 - \cos p_2}{\emax - \dispersion(p)}
$$
belongs to $L^{2,\omega}(\T^2)$ and solves \eqref{eigen_equation}.
\medskip

(c) Assume that $\omega=\es,$ $\frac{a+4b}{ab}>0$ and  $\Theta^*a=\Theta^{**}b,$ where $\Theta^*$ and $\Theta^{**}$ are given by \eqref{theta_mayor} and \eqref{theta_podpolkovnik}. Then the function
$$
\Phi_{\es}(p)=  \dfrac{2+\cos p_1 + \cos p_2}{\emax - \dispersion(p)}
$$
belongs to $L^{2,\omega}(\T^2)$ and solves \eqref{eigen_equation}.
\medskip

(d) Assume that $\omega=\es,$ $\frac{a+4b}{ab}>0$ and  $\Theta^*a\ne \Theta^{**}b.$ Then  \eqref{eigen_equation} has only $0$-solution in $L^{1,\es}(\T^2)$.
\end{theorem}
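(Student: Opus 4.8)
The plan is to reduce the eigenvalue equation \eqref{eigen_equation} on each invariant subspace $L^{2,\omega}(\T^2)$ to a finite-dimensional linear system, using the finite-rank structure of $V_{a,b}$, and then analyze solvability at $\mu=\mu_\omega^0$. Writing $f\in L^{1,\omega}(\T^2)$ with $(H_0-\emax)f = -\mu_\omega^0 V_{a,b}f$, and noting that $V_{a,b}f$ is a linear combination of $1,\cos p_1+\cos p_2,\cos p_1-\cos p_2,\sin p_1+\sin p_2,\sin p_1-\sin p_2$ with coefficients given by the corresponding Fourier-type integrals of $f$, we get on each $L^{2,\omega}$ that $f(p)=\frac{P_\omega(p)}{\emax-\dispersion(p)}$, where $P_\omega$ is a trigonometric polynomial of the type dictated by the symmetry class: $P_{\os}=c(\sin p_1+\sin p_2)$, $P_{\oa}=c(\sin p_1-\sin p_2)$, $P_{\ea}=c(\cos p_1-\cos p_2)$, and $P_{\es}=c_1+c_2(\cos p_1+\cos p_2)$. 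Substituting this ansatz back and projecting onto the generating functions yields a scalar equation in the one-dimensional cases $\os,\oa,\ea$ and a $2\times2$ homogeneous system in the $\es$ case. The key computational input is that the diagonal coefficient of these systems, evaluated at $\mu=\mu_\omega^0$, vanishes precisely by the definitions \eqref{def:mu_os}--\eqref{def:mu_es}, \eqref{theta_mayor}, \eqref{theta_podpolkovnik}: e.g.\ for $\omega=\os$, the consistency condition is $1=\frac{\mu_\omega^0 b}{4\pi^2}\int_{\T^2}\frac{(\sin q_1+\sin q_2)^2}{\emax-\dispersion(q)}\,\d q=\frac{\mu_\omega^0 b}{\gamma_\omega}$, which holds iff $\mu_\omega^0=\gamma_\omega/b$. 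Thus at the threshold the stated function indeed solves \eqref{eigen_equation}; this proves the existence claims in (a), (b), (c).

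For the $\es$ case in part (c) I would set up the $2\times2$ matrix $M(\mu)$ acting on $(c_1,c_2)$, whose entries are $\mu$ times integrals of $1,\ (\cos p_1+\cos p_2),\ (\cos p_1+\cos p_2)^2$ against $(\emax-\dispersion)^{-1}$, shifted by the identity. One checks that $\det M(\mu_\es^0)=0$ follows from the relation $\mu_\es^0=\frac{a+4b}{ab}\gamma_\es$ together with the definition \eqref{def:mu_es} of $\gamma_\es$; the kernel direction is then computed from the matrix entries, and the extra hypothesis $\Theta^*a=\Theta^{**}b$ is exactly what forces the kernel eigenvector to be proportional to $(2,1)$, giving $P_{\es}(p)=2+\cos p_1+\cos p_2$ as claimed. (If $\Theta^*a\ne\Theta^{**}b$ the null direction of $M(\mu_\es^0)$ is a different combination — or $M(\mu_\es^0)$ is invertible — and this is the content of part (d): one shows $\det M(\mu)\ne0$, equivalently the only $L^{1,\es}$ solution is $0$, by the same determinant computation. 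I would record $\det M(\mu_\es^0)$ as an explicit expression in $a,b,\gamma_\es,\Theta^*,\Theta^{**},\kappa_1$ and observe it is nonzero precisely when $\Theta^*a\ne\Theta^{**}b$.) The membership statements — $\Phi_{\os},\Phi_{\oa}\notin L^2$ but $\in L^1$, while $\Phi_{\ea},\Phi_{\es}\in L^2$ — follow from the Morse Lemma (Remark \ref{rem:morse_lemma}): near $\vec\pi$, $\emax-\dispersion\sim y^2$ in the Morse coordinates, while the numerators vanish to order $1$ for $\os,\oa$ (since $\sin p_i$ has a simple zero structure making $\sin p_1\pm\sin p_2$ generically order $1$ in $|p-\vec\pi|$) and to order $1$ also for $\ea,\es$; the difference is that for $\os,\oa$ the square of the numerator over $(\emax-\dispersion)^2\sim |y|^2/|y|^4=|y|^{-2}$ is non-integrable in $2$D, whereas for $\ea,\es$ a closer look shows the numerator actually vanishes to order $\ge 1$ in a way that, after squaring and dividing by $|y|^4$, gives an integrable singularity — more precisely $\cos p_1-\cos p_2$ and $2+\cos p_1+\cos p_2$ both vanish to order $1$ near $\vec\pi$, so $|P|^2/|y|^4\sim|y|^{-2}$... the sharper statement needed is that for $\ea,\es$ the singularity is actually milder. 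I would invoke Corollary \ref{cor:integral_finite00} (which already guarantees $\int (\cos q_1-\cos q_2)^2(\emax-\dispersion)^{-1}<\infty$ etc.) together with the second-order estimate $\emax-\dispersion(p)\ge c|p-\vec\pi|^2$ to get $\int |\Phi_{\ea}|^2=\int\frac{(\cos q_1-\cos q_2)^2}{(\emax-\dispersion)^2}\le \frac{1}{c}\int\frac{(\cos q_1-\cos q_2)^2}{|p-\vec\pi|^2(\emax-\dispersion)}$, and since $(\cos q_1-\cos q_2)^2\lesssim |p-\vec\pi|^2$... this needs the numerator to vanish to order $\ge 2$, which is false pointwise but true along the relevant directions — so the honest argument is a direct polar-coordinate integration near $\vec\pi$.

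I expect the main obstacle to be the sharp integrability dichotomy in the $\ea$ and $\es$ cases: showing $\Phi_{\ea},\Phi_{\es}\in L^2$ is not automatic from $\emax-\dispersion\sim|y|^2$ alone, since naively $|P|^2/(\emax-\dispersion)^2$ behaves like $|y|^{-2}$ which is borderline non-integrable in $\R^2$. The resolution must use that the numerators $\cos p_1-\cos p_2$ and $2+\cos p_1+\cos p_2$ are \emph{not} generic: in Morse coordinates centered at $\vec\pi$ one has $\cos p_i = -1 + \tfrac12(p_i-\pi)^2+O((p_i-\pi)^4)$, so $\cos p_1-\cos p_2 = \tfrac12[(p_1-\pi)^2-(p_2-\pi)^2]+\dots$ vanishes to order \emph{two}, and likewise $2+\cos p_1+\cos p_2=\tfrac12[(p_1-\pi)^2+(p_2-\pi)^2]+\dots$ vanishes to order two; hence $|P|^2\lesssim|p-\vec\pi|^4$ and $|P|^2/(\emax-\dispersion)^2\lesssim 1$ is bounded, giving $L^2$ membership. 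By contrast $\sin p_1\pm\sin p_2$ vanishes only to order one near $\vec\pi$ (since $\sin p_i=-(p_i-\pi)+O((p_i-\pi)^3)$ near $p_i=\pi$ has a simple zero), so $|\Phi_{\os}|^2\sim|y|^{-2}$ is non-integrable, yielding $\Phi_{\os}\notin L^2$; integrability in $L^1$ is clear from $|\Phi_{\os}|\lesssim|y|^{-1}$ which is integrable in $2$D. This order-of-vanishing bookkeeping is the crux and should be stated carefully; everything else is the finite-rank reduction plus the determinant identities already prepared by the definitions of $\gamma_\omega,\Theta^*,\Theta^{**}$.
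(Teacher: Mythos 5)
Your proposal handles parts (a) and (b) essentially the way the paper does (the paper packages the rank-one reduction plus the order-of-vanishing dichotomy as Propositions \ref{prop:existence_rank1} and \ref{prop:reson_rank1}), and your eventual Taylor-expansion argument for $L^2$ membership --- noting that $2+\cos p_1+\cos p_2$ and $\cos p_1-\cos p_2$ vanish to order two at $\vec\pi$ while $\sin p_1\pm\sin p_2$ vanish only to order one --- is correct, despite some mid-paragraph wobble (``false pointwise \dots but true along the relevant directions''). The upper bound $|P(p)|\lesssim|p-\vec\pi|^2$ holds pointwise for the $\ea$ and $\es$ numerators, so no polar-coordinate digression is needed.

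The genuine gap is in your treatment of parts (c) and (d). Your proposed $2\times2$ matrix $M(\mu)$ has entries $1-\mu a I_1$, $-\mu a I_3$, $-\mu b I_3$, $1-\mu b I_2$ where
\[
I_1=\tfrac1{4\pi^2}\int_{\T^2}\tfrac{\d q}{\emax-\dispersion},\qquad
I_2=\tfrac1{4\pi^2}\int_{\T^2}\tfrac{(\cos q_1+\cos q_2)^2\d q}{\emax-\dispersion},\qquad
I_3=\tfrac1{4\pi^2}\int_{\T^2}\tfrac{(\cos q_1+\cos q_2)\d q}{\emax-\dispersion},
\]
and \emph{all three integrals diverge} (logarithmically, as $z\searrow\emax$ in Proposition~\ref{prop:kernel_asymptotics_max}), because none of the numerators $1$, $\cos q_1+\cos q_2$, $(\cos q_1+\cos q_2)^2$ vanish at $\vec\pi$. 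So ``recording $\det M(\mu_\es^0)$ as an explicit expression in $a,b,\gamma_\es,\Theta^*,\Theta^{**},\kappa_1$'' and ``computing the kernel direction from the matrix entries'' are not well-defined operations as stated; the divergences only cancel in the determinant after a careful limit $z\searrow\emax$, and the kernel of a matrix with diverging entries is not a meaningful object. The paper avoids this by imposing the $L^1$-constraint \emph{first}: since $\emax-\dispersion(p)\sim|p-\vec\pi|^2$, an $L^1$-solution with numerator $c_1+c_2(\cos p_1+\cos p_2)$ must satisfy $c_1-2c_2=0$, i.e.\ must be a multiple of $\Phi_{\es}(p)=\frac{2+\cos p_1+\cos p_2}{\emax-\dispersion(p)}$ --- and for \emph{that} specific numerator, the projections $\frac1{4\pi^2}\int\frac{(2+\cos q_1+\cos q_2)\d q}{\emax-\dispersion}=\Theta^*$ and $\frac1{4\pi^2}\int\frac{2(\cos q_1+\cos q_2)(2+\cos q_1+\cos q_2)\d q}{\emax-\dispersion}=\Theta^{**}$ are finite. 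Substituting into the eigenvalue equation and matching the constant and $\cos p_1+\cos p_2$ coefficients then yields the pair of \emph{finite} scalar equations $\mu_\es^0\,a\Theta^*=2$ and $\mu_\es^0\,b\Theta^{**}=2$, which, using $\gamma_\es^{-1}=2\Theta^*+\tfrac12\Theta^{**}$ and $\mu_\es^0=(a+4b)\gamma_\es/(ab)$, are simultaneously satisfiable if and only if $\Theta^*a=\Theta^{**}b$. This collapse from a formal $2\times2$ system with divergent entries to a one-parameter family with finite projections is exactly the step your write-up skips; add it, and the rest of your argument goes through.
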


\begin{remark}\label{rem:origins}
The functions  $\Phi_\omega$ are nothing but the pointwise limits of the eigenfunctions $\Psi_\omega^\mu(p)$ of $H_{a,b}(\mu)$ (given by Theorem \ref{teo:existence_eigenv}) as $\mu\searrow\mu_\omega^0$, and hence, they are  the ``origin''s of eigenfunctions. In this sense, we can say that odd-symmetric and
odd-antisymmetric eigenfunctions come out from threshold resonances and even-antisymmetric functions appear from threshold eigenfunctions.
The case of even-symmetric eigenfunctions are bit delicate: eigenfunctions may appear ``mostly'' from nothing, and in ``rare'' cases, appear from threshold eigenfunctions.
\end{remark}

\begin{remark}\label{rem:thre_resonaaa}
When $\dispersion(\cdot)$ is even in each coordinates, then by \eqref{def:mu_os} and \eqref{def:mu_oa} one has $\gamma_\os=\gamma_\oa.$ In particular, if we choose $a>0$ such that 
$\mu_\os=\mu_\oa=\mu_\es$ for any $b>0,$ then three eigenvalues simultaneously release from the essential spectrum. 
\end{remark}

\section{Proofs of main results}\label{sec:proofs}

In what follows we write
\begin{align*}
V_{b}^{\os}f(p) := & \frac{b}{4\pi^2}\,(\sin p_1 +\sin p_2)\int_{\T^2}(\sin q_1+\sin q_2)f(q)\d q,\\
V_{b}^{\oa}f(p) := & \frac{b}{4\pi^2}\,(\sin p_1 - \sin p_2) \int_{\T^2} (\sin q_1 - \sin q_2)f(q)\,\d q\\
V_{b}^{\ea}f(p) := & \frac{b}{4\pi^2}\,(\cos p_1 - \cos p_2) \int_{\T^2} (\cos q_1 - \cos q_2)f(q)\,\d q,\\
V_{a,b}^{\es}f(p) := & \frac{a}{4\pi^2}\int_{\T^2} f(q)\,\d q + \frac{b}{4\pi^2}\,(\cos p_1 + \cos p_2) \int_{\T^2} (\cos q_1 + \cos q_2)f(q)\,\d q.
\end{align*}
Then
$$
H_{a,b}(\mu)\big|_{L^{2,\omega}(\T^2)} = H_b^\omega(\mu):=H_0 + \mu V_{b}^\omega
$$
for $\omega\in\{\os,\oa,\ea\}$ and 
$$
H_{a,b}(\mu)\big|_{L^{2,\omega}(\T^2)} = H_{a,b}^\es(\mu):=H_0 + \mu V_{a,b}^\es,
$$
and the essential spectrum of $H_{a,b}(\mu)\big|_{L^{2,\omega}(\T^2)}$ coincides with the segment $[\emin,\emax].$ 

\subsection{Eigenvalues of rank-one perturbations of \text{$H_0$}}\label{subsec:eigen_rank1}

In this subsection we study the eigenvalues of operators of the form
$$
H_v(\mu):=H_0 + \mu v(p)\int_{\T^2} v(q)f(q)\d q
$$
defined in $L^2(\T^2),$ where $v(\cdot)$ is a given nonzero real-analytic function on $\T^2$. By the min-max principle, $H_v(\mu)$ can have at most one eigenvalue  outside the essential spectrum. Also note that $z_0>\emax$ is eigenvalue of $H_v(\mu)$ if and only if $z_0$ is a zero of the corresponding Fredholm determinant
\begin{equation}\label{fred_deter_rank1}
\Delta_v(\mu;z):=1 - \mu\int_{\T^2} \frac{v(q)^2\d q}{z- \dispersion(q)}.
\end{equation}
Such an equivalence will be frequently used subsequently.

First we study the existence of eigenvalues of $H_v(\mu).$

\begin{proposition}\label{prop:existence_rank1}
Let $v$ be nonzero real-analytic function on $\T^2$ and let
$$
\mu_v^0:=\Big[\int_{\T^2}\frac{v(q)^2\d q}{\emax- \dispersion(q)}\Big]^{-1}.
$$
Then $\mu_v^0=0$ if $v(\vec\pi)\ne0$ and $\mu_v^0\in(0,+\infty)$ if $v(\vec\pi)=0.$ Moreover, for any $\mu>\mu_v^0$ the operator $H_v(\mu)$ has a unique eigenvalue $z_v(\mu)>\emax$ and the corresponding eigenfunction is
$$
f_\mu(p) = \frac{v(p)}{z_v(\mu) - \dispersion(p)}.
$$
Moreover, the function $z_v(\cdot)$ is real-analytic, strictly increasing and strictly convex in $(\mu_v^0,+\infty)$ with the asymptotics
\begin{equation}\label{small_mu_asympo}
z_v(\mu)\searrow \emax\quad\text{as $\mu\searrow \mu_v^0$.}
\end{equation}
Finally, if $v\in L^{2,\omega}(\T^2)$ for some $\omega\in\{\os,\oa,\ea,\es\},$ then $L^{2,\omega}(\T^2)$ is invariant subspace of $H_v(\mu)$ and $f_\mu\in L^{2,\omega}(\T^2).$
\end{proposition}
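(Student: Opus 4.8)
The plan is to reduce the entire statement to the behaviour of the scalar function
$$
g(z):=\int_{\T^2}\frac{v(q)^2\,\d q}{z-\dispersion(q)},\qquad z>\emax,
$$
because by \eqref{fred_deter_rank1} one has $\Delta_v(\mu;z)=1-\mu g(z)$, so a point $z_0>\emax$ is an eigenvalue of $H_v(\mu)$ precisely when $g(z_0)=1/\mu$. First I would note that, $v$ being a nonzero real-analytic function, its zero set is Lebesgue-null, so $v^2>0$ a.e.; moreover $0<\emax-\dispersion(q)\le z-\dispersion(q)\le z-\emin$ off $\vec\pi$, so differentiation under the integral sign is legitimate on $(\emax,+\infty)$ and gives that $g$ is real-analytic there with
$$
g'(z)=-\int_{\T^2}\frac{v(q)^2\,\d q}{(z-\dispersion(q))^2}<0,\qquad g''(z)=2\int_{\T^2}\frac{v(q)^2\,\d q}{(z-\dispersion(q))^3}>0 .
$$
Hence $g$ is strictly decreasing and strictly convex, $g(z)\to0$ as $z\to+\infty$, and by monotone convergence $g(z)\nearrow\int_{\T^2}\frac{v(q)^2\,\d q}{\emax-\dispersion(q)}=1/\mu_v^0\in(0,+\infty]$ as $z\searrow\emax$. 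The dichotomy for $\mu_v^0$ is then exactly the statement that this last integral is finite iff $v(\vec\pi)=0$: near $\vec\pi$ the Morse Lemma (Remark \ref{rem:morse_lemma}) gives $\emax-\dispersion(q)\asymp|q-\vec\pi|^2$, so the integrand is $\asymp|q-\vec\pi|^{-2}$ and the integral diverges when $v(\vec\pi)\ne0$, while it is bounded near $\vec\pi$ (as $v^2$ vanishes there to order $\ge2$) and finite when $v(\vec\pi)=0$; this is Corollary \ref{cor:integral_finite00}. Thus $\mu_v^0=0$ in the first case and $\mu_v^0\in(0,+\infty)$ in the second.

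Since $g:(\emax,+\infty)\to(0,1/\mu_v^0)$ is a strictly decreasing bijection (with the convention $1/0=+\infty$), the equation $g(z)=1/\mu$ has a solution $z>\emax$ iff $1/\mu<1/\mu_v^0$, i.e. iff $\mu>\mu_v^0$, and this solution, call it $z_v(\mu)$, is unique; uniqueness is in any case guaranteed by the min--max principle. For the eigenfunction, the relation $(z_v(\mu)-\dispersion(p))f(p)=\mu\langle v,f\rangle\,v(p)$ forces $f$ to be a scalar multiple of $f_\mu(p)=v(p)/(z_v(\mu)-\dispersion(p))$; this $f_\mu$ lies in $L^2(\T^2)$ since $z_v(\mu)-\dispersion\ge z_v(\mu)-\emax>0$ and $v$ is bounded, and it is a genuine eigenfunction because $\langle v,f_\mu\rangle=g(z_v(\mu))=1/\mu\ne0$.

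To get the qualitative properties of $z_v(\cdot)$ I would proceed as follows. Real-analyticity follows from the analytic implicit function theorem applied to $F(\mu,z)=1-\mu g(z)$, whose $z$-derivative $-\mu g'(z)$ never vanishes. For monotonicity and convexity, put $h:=1/g$; then $\mu g(z_v(\mu))=1$ reads $h(z_v(\mu))=\mu$, so $z_v=h^{-1}$, and it suffices to show that $h$ is strictly increasing and strictly concave on $(\emax,+\infty)$. Now $h'=-g'/g^2>0$ and $h''=g^{-3}\,(2(g')^2-g''g)$, and the Cauchy--Schwarz inequality for the measure $v^2\,\d q$,
$$
\Big(\int_{\T^2}\frac{v^2\,\d q}{(z-\dispersion)^2}\Big)^2\le\Big(\int_{\T^2}\frac{v^2\,\d q}{z-\dispersion}\Big)\Big(\int_{\T^2}\frac{v^2\,\d q}{(z-\dispersion)^3}\Big),
$$
with strict inequality because $z-\dispersion$ is not a.e. constant, gives $2(g')^2<g''g$, hence $h''<0$; therefore $z_v=h^{-1}$ is strictly increasing and strictly convex on $(\mu_v^0,+\infty)$. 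Finally, $z_v$ being increasing and $\ge\emax$, the limit $L:=\lim_{\mu\searrow\mu_v^0}z_v(\mu)\ge\emax$ exists, and $L>\emax$ would force $g(L)=\lim 1/\mu=1/\mu_v^0$, contradicting $g(L)<\lim_{z\searrow\emax}g(z)=1/\mu_v^0$; so $z_v(\mu)\searrow\emax$. For the last assertion: since $\dispersion$ is even and symmetric each $L^{2,\omega}(\T^2)$ is $H_0$-invariant, and if $v\in L^{2,\omega}(\T^2)$ the rank-one part maps $L^2(\T^2)$ into $\mathrm{span}(v)\subset L^{2,\omega}(\T^2)$, so $L^{2,\omega}(\T^2)$ is $H_v(\mu)$-invariant; and $f_\mu=v/(z_v(\mu)-\dispersion)$ has the same parity/symmetry type as $v$ because $\dispersion$ is even and symmetric, whence $f_\mu\in L^{2,\omega}(\T^2)$.

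The only genuinely delicate points are the two endpoint issues: identifying $\lim_{z\searrow\emax}g(z)=1/\mu_v^0$ together with the finiteness dichotomy, which rests on the Morse normal form near $\vec\pi$ and is precisely Corollary \ref{cor:integral_finite00}; and the strict convexity of $z_v$, for which passing to $h=1/g$ and invoking Cauchy--Schwarz is the clean route. Everything else — existence and uniqueness via a monotone bijection, the explicit eigenfunction, the analyticity and monotonicity of $z_v$, and the invariance under symmetry — is routine once differentiation under the integral sign has been justified.
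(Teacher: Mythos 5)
Your proof is correct. The paper omits its own proof, saying only that "all assertions but the last one can be done along the lines of \cite[Theorem 2.1]{KhKhP:2020} using Proposition \ref{prop:kernel_asymptotics_max}"; your reconstruction is the natural one and matches that pointer: you reduce everything to the scalar Birman--Schwinger function $g(z)=\int_{\T^2} v^2/(z-\dispersion)\,\d q$, obtain the threshold dichotomy from the Morse normal form near $\vec\pi$ (i.e.\ from the mechanism behind Proposition \ref{prop:kernel_asymptotics_max} and Corollary \ref{cor:integral_finite00}), deduce existence and uniqueness from $g$ being a strictly decreasing bijection of $(\emax,\infty)$ onto $(0,1/\mu_v^0)$, get analyticity from the implicit function theorem, and then handle strict convexity of $z_v=(1/g)^{-1}$ by the Cauchy--Schwarz inequality $2(g')^2<g\,g''$ in the measure $v^2\,\d q$ (strict because $\{v\neq0\}$ has full Lebesgue measure and $\dispersion$ is nonconstant). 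The invariance step at the end is exactly the paper's "obvious" last assertion.
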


We drop the proof since the all assertions but the last one can be done along the lines of \cite[Theorem 2.1]{KhKhP:2020} using Proposition \ref{prop:kernel_asymptotics_max}. The last assertion is obvious.

Next we study the asymptotics of $z_v(\mu)$ as $\mu\searrow\mu_v^0.$

\begin{proposition}\label{prop:asym_rank1}
(a) Assume that $v(\vec\pi)\ne0.$ Then for sufficiently small and positive $\mu>0$
\begin{align}\label{asymp_at_v2pi0}
z_v(\mu) =\emax+ c_v\,e^{-\frac{1}{4\pi^2J_0 v(\vec{\pi})^2\,\mu} } +\sum\limits_{\substack{n,m\geq0,\\n+m\ge1}}c_{nm} \mu^n\,\Big(\tfrac{1}{\mu}\,e^{-\frac{1}{4\pi^2 J_0v(\vec{\pi})^2\,\mu}}\Big)^{m+1},
\end{align}
where $J_0>0$ is given by Remark \ref{rem:morse_lemma}, $\{c_{nm}\}$ are real  coefficients,
$
c_v:=e^{\frac{\omega_\dispersion}{4\pi^2\,J_0v(\vec{\pi})^2}}
$
and $\omega_\dispersion\in\R$ is a constant depending only on $\dispersion;$

(b) Assume that $v(\vec\pi)=0$ and $\nabla v(\vec\pi)\ne0.$ Then for sufficiently small and positive $\mu-\mu_v^0$
\begin{align}\label{asymp_at_vpi0mas}
z_v(\mu)& =\emax+ \frac{c_v\,(\mu-\mu_v^0)}{-\ln(\mu-\mu_v^0)} \nonumber \\
&+\sum\limits_{\substack{n,m,k\geq0,\\n+m+k\ge1}}c_{nmk} (\mu-\mu_v^0)^{n}\Big( \frac{\mu-\mu_v^0}{-\ln(\mu-\mu_v^0)}\Big)^{m+1}
 \Big(\frac{\ln\ln(\mu-\mu_v^0)^{-1}}{-\ln(\mu-\mu_v^0)}\Big)^k,
\end{align}
where $\{c_{nmk}\}$ are real  coefficients and 
$$
c_v= \frac{1}{2\pi^2 J_0(\mu_v^0)^2} \Big[\Big(\frac{\p v}{\p q_1}(\vec{\pi})\Big)^2\,\Big(\frac{\p \psi}{\p y_1}(\vec{0})\Big)^2+\Big(\frac{\p v}{\p q_2}(\vec{\pi})\Big)^2\,\Big(\frac{\p \psi}{\p y_2}(\vec{0})\Big)^2\Big]^{-1} ;
$$

(c) Assume that $v(\vec\pi)=0$ and $\nabla v(\vec\pi)=0.$ Then
for sufficiently small and positive $\mu-\mu_v^0$
\begin{align}\label{asymp_at_vpi0}
z_v(\mu) =\emax+ c_v\,(\mu-\mu_v^0) +\sum\limits_{\substack{n,m\geq0,\\n+m\ge1}}c_{nm} (\mu-\mu_v^0)^{n+1}(-(\mu-\mu_v^0)\ln(\mu-\mu_v^0))^m,
\end{align}
where $\{c_{nm}\}$ are real  coefficients  and
$$
c_v=\frac{1}{(\mu_v^0)^2}\Big[\int_{\T^2}\frac{v(q)^2\, \d q}{(\emax-\dispersion(q))^2}\Big]^{-1}>0.
$$
\end{proposition}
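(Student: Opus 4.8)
The plan is to convert the eigenvalue problem into the scalar equation $\mu\,G_v(z)=1$, where
\[
G_v(z):=\int_{\T^2}\frac{v(q)^2\,\d q}{z-\dispersion(q)},\qquad z>\emax,
\]
since, by Proposition \ref{prop:existence_rank1} (equivalently, by \eqref{fred_deter_rank1}), $z_v(\mu)$ is the unique solution of $\Delta_v(\mu;z)=0$ in $(\emax,+\infty)$, i.e. $G_v(z_v(\mu))=\mu^{-1}$. Thus $z_v(\cdot)$ is obtained by inverting $G_v$, and the whole proof splits into (i) producing a sufficiently precise description of $G_v(z)$ as $z\searrow\emax$, and (ii) inverting it in an appropriate class of convergent series. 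The qualitative claims (real-analyticity, monotonicity, convexity, and \eqref{small_mu_asympo}) are already in Proposition \ref{prop:existence_rank1}, so only the quantitative expansions \eqref{asymp_at_v2pi0}, \eqref{asymp_at_vpi0mas}, \eqref{asymp_at_vpi0} require work.

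For step (i) I would localize near $\vec\pi$. Away from the Morse neighbourhood $U(\vec\pi)$ of Remark \ref{rem:morse_lemma} the integrand is real-analytic in $z$ on a neighbourhood of $\emax$, contributing a convergent power series in $t:=z-\emax$. On $U(\vec\pi)$, the substitution $q=\psi(y)$ turns the integral into $\int_{B_\gamma(\vec0)}\frac{g(y)\,\d y}{t+y^2}$ with $g(y):=v(\psi(y))^2\,J(\psi(y))$ real-analytic and $g(\vec0)=4\pi J_0\,v(\vec\pi)^2$; expanding $g$ in its Taylor series and integrating $\frac{y^\alpha}{t+y^2}$ over $B_\gamma(\vec0)$ monomial by monomial (only multi-indices with both components even survive the angular integration) — which is exactly the content of the integral expansions in the Appendix (cf. Proposition \ref{prop:kernel_asymptotics_max}) — yields
\[
G_v(z)=P(t)+Q(t)\ln t
\]
with $P,Q$ convergent power series near $t=0$, $Q(0)=-4\pi^2 J_0\,v(\vec\pi)^2$, and $Q'(0)=2\pi^2 J_0\big[(\tfrac{\p v}{\p q_1}(\vec\pi))^2(\tfrac{\p\psi}{\p y_1}(\vec0))^2+(\tfrac{\p v}{\p q_2}(\vec\pi))^2(\tfrac{\p\psi}{\p y_2}(\vec0))^2\big]$. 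The three cases correspond to how many leading coefficients of $g$ vanish: in (a) $Q(0)\neq0$; in (b) $Q(0)=0$, $Q'(0)\neq0$, and $P(0)=(\mu_v^0)^{-1}$; in (c) $Q$ vanishes to second order, $P(0)=(\mu_v^0)^{-1}$ and $P'(0)=-\int_{\T^2}\frac{v(q)^2\,\d q}{(\emax-\dispersion(q))^2}$ (finite because $v$ vanishes to second order at $\vec\pi$).

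For step (ii) I would solve $P(t)+Q(t)\ln t=\mu^{-1}$. In case (a), isolate the logarithm: writing $Q(t)=Q(0)+t\widetilde Q(t)$ gives $\ln t=-\frac{1}{4\pi^2 J_0 v(\vec\pi)^2}\big(\mu^{-1}-P(t)-t\widetilde Q(t)\ln t\big)$, hence
\[
t=\exp\!\Big(-\tfrac{1}{4\pi^2 J_0 v(\vec\pi)^2\,\mu}\Big)\cdot\exp\!\Big(\tfrac{P(t)+t\widetilde Q(t)\ln t}{4\pi^2 J_0 v(\vec\pi)^2}\Big),
\]
a fixed-point equation whose right-hand side is $O\!\big(\tfrac1\mu e^{-c/\mu}\big)$ and analytic in $t$ near $0$ (note $t\widetilde Q(t)\ln t\to0$); a contraction argument in the space of convergent series in $\mu$ and $\varepsilon:=\tfrac1\mu e^{-c/\mu}$ gives \eqref{asymp_at_v2pi0} with $c_v=\exp\!\big(P(0)/(4\pi^2 J_0 v(\vec\pi)^2)\big)$ and $\omega_\dispersion$ read off from $P(0)$. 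In case (c), $\mu^{-1}-(\mu_v^0)^{-1}=P'(0)t+O(t^2)+O(t^2\ln t)$, and since $\mu^{-1}-(\mu_v^0)^{-1}=-(\mu-\mu_v^0)/(\mu\mu_v^0)$ one inverts directly: $t=\frac{\mu-\mu_v^0}{(\mu_v^0)^2}\big(-P'(0)\big)^{-1}+\cdots$, i.e. \eqref{asymp_at_vpi0}, the correction series living in $\mu-\mu_v^0$ and $-(\mu-\mu_v^0)\ln(\mu-\mu_v^0)$.

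The main obstacle is case (b), where the dominant balance is the transcendental relation $Q'(0)\,t\ln t\approx-(\mu-\mu_v^0)/(\mu_v^0)^2$. Here I would set $\lambda:=\mu-\mu_v^0$, substitute $t=\tau\,u$ with $\tau:=\lambda/(-\ln\lambda)$, so that $\ln t=\ln\lambda-\ln(-\ln\lambda)+\ln u$ and the equation for $u$ takes the form $u=\big(Q'(0)(\mu_v^0)^2\big)^{-1}+\big(\text{terms in }\lambda,\ \tau,\ \theta\big)$ with $\theta:=\tfrac{\ln\ln\lambda^{-1}}{-\ln\lambda}$, amenable to iteration; proving that the resulting series converges — i.e. identifying the correct three-variable space of monomials $\lambda^{n}\tau^{m+1}\theta^{k}$ and checking the self-map is a contraction there — is the technical heart of the argument, and it is precisely the mechanism reused (with extra variables) in the proofs of Theorems \ref{teo:depend_eigenv_I} and \ref{teo:depend_eigenv_II}. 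The leading term then reads $c_v\,\tau$ with $c_v=\big(Q'(0)(\mu_v^0)^2\big)^{-1}$, which is the stated expression.
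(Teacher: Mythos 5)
Your proposal follows essentially the same route as the paper: reduce the eigenvalue problem to $\mu\,G_v(z)=1$, expand $G_v=P(t)+Q(t)\ln t$ near $t=z-\emax=0$ via the Morse change of coordinates and the Appendix (Proposition~\ref{prop:kernel_asymptotics_max} and Corollary~\ref{cor:integral_finite00} applied with $v\mapsto v^2$), and invert by the singular substitutions $\tau=\tfrac1\mu e^{-1/(4\pi^2 J_0 v(\vec\pi)^2\mu)}$ in (a), $\tau=\lambda/(-\ln\lambda)$ together with $\theta=\tfrac{\ln\ln\lambda^{-1}}{-\ln\lambda}$ in (b), and $\sigma=-\lambda\ln\lambda$ in (c), which is exactly what the paper does. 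The ``contraction argument in the space of convergent series'' you invoke is precisely the analytic implicit function theorem that the paper applies after writing $\alpha=(\text{singular prefactor})\cdot(c+u)$ and checking that $F(u,\mu,\tau,\ldots)=0$ is jointly real-analytic with $F(0,0,\ldots)=0$ and $\p_u F\neq0$, and your identifications of $Q(0)$, $Q'(0)$, $P'(0)$ and hence of the leading constants $c_v$ in all three cases coincide with the paper's.
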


\begin{proof}
By Proposition \ref{prop:existence_rank1} $\mu_v^0=0$ if $v(\vec\pi)=0$ and $\mu_v^0>0$ if $v(\vec\pi)\ne0.$
Recall that $z:=z_v(\mu)>\emax$ is an eigenvalue of $H_v(\mu)$ if and only if its Fredholm determinant $\Delta^v$ given by \eqref{fred_deter_rank1} satisfies
\begin{equation}\label{delta_teng0_rank1}
\Delta_v(\mu;z)=1 - \mu\int_{\T^2}\frac{v(q)^2\d q}{z - \dispersion(q)}=0.
\end{equation}
By \eqref{small_mu_asympo} $z\searrow\emax$ as $\mu\searrow \mu_v^0.$ From Proposition \ref{prop:kernel_asymptotics_max} and Corollary \ref{cor:integral_finite00}  applied with $v:=v^2$  for sufficiently small $\lambda:=\mu-\mu_v^0>0$ (so that $\alpha:=z-\emax>0$ is also small) the equation \eqref{delta_teng0_rank1} is represented as follows:
\medskip

\begin{itemize}
\item[(a)] if $v(\vec\pi)\ne0$,
\begin{equation}\label{eq_v_noteng0}
\frac{1}{\mu} = \omega_0^{(2)}-4\pi^2 J_0v(\vec{\pi})^2\,\ln \alpha+\sum\limits_{n\geq1}\omega_{n}^{(1)}\alpha^n \ln\alpha +\sum\limits_{n\geq1}\omega_{n}^{(2)}\alpha^n;
\end{equation}

\item[(b)] if $v(\vec\pi)=0$ and $\nabla v(\vec\pi)\ne0,$
\begin{equation}\label{eq_v_ten01}
-\frac{\lambda}{\mu_{v}^0\lambda +(\mu_{v}^0)^2}=\sum\limits_{n\geq1}\omega_{n}^{(1)}\alpha^n \ln\alpha + \sum\limits_{n\geq1}\omega_{n}^{(2)}\alpha^n,
\end{equation}
where 
$$
\omega_1^{(1)} = 2\pi^2 J_0 \Big[\Big(\frac{\p v}{\p q_1}(\vec{\pi})\Big)^2\,\Big(\frac{\p \psi}{\p y_1}(\vec{0})\Big)^2+\Big(\frac{\p v}{\p q_2}(\vec{\pi})\Big)^2\,\Big(\frac{\p \psi}{\p y_2}(\vec{0})\Big)^2\Big]  >0;
$$

\item[(c)] if $v(\vec\pi)=0$ and $\nabla v(\vec\pi)=0,$
\begin{equation}\label{eq_v_ten02}
-\frac{\lambda}{\mu_{v}^0\lambda +(\mu_{v}^0)^2}=\sum\limits_{n\geq1}\omega_{n}^{(2)}\alpha^n  + \sum\limits_{n\geq2}\omega_{n}^{(1)}\alpha^n \ln\alpha,
\end{equation}
where
$$
\omega_1^{(2)}=-\int_{\T^2} \frac{v(q)^2\,\d q}{( \emax-\dispersion(q) )^2}< 0.
$$
\end{itemize}
Here the coefficients $\{\omega_n^{(1)}\},\omega_n^{(2)}\}$ in \eqref{eq_v_noteng0}-\eqref{eq_v_ten02} are real numbers. Now using a singular change of variables, we reduce the equations to the implicit function theorem in analytical case.

(a) To find the implicit function $\alpha=\alpha(\mu)$ solving \eqref{eq_v_noteng0} we set
\begin{equation}\label{alpha_d299}
\alpha:=e^{-\frac{1}{4\pi^2 J_0v(\vec{\pi})^2\,\mu}}(u+d_0),\quad \tau:=\tfrac{1}{\mu}e^{-\frac{1}{4\pi^2 J_0v(\vec{\pi})^2\mu}},
\end{equation}
where
$
d_0:=e^{\frac{\omega_0^{(2)}}{4\pi^2 J_0v(\vec{\pi})^2}}>0 
$
and $\tau$ is small if $\mu>0$ is small. 
Inserting this change of variables in \eqref{eq_v_noteng0} we get
\begin{align*}
F(u,\mu,\tau): = & \omega_0^{(2)}-4\pi^2 J_0v(\vec{\pi})^2\ln (u+d_0)\nonumber \\
+ & \sum\limits_{n\geq 1}\omega_n^{(1)} \tau^n (u+d_0)^n \Big(\tfrac{\mu^{n-1}}{4\pi^2 J_0v(\vec{\pi})^2} 
 + \mu^n \ln (u+d_0)\Big)
+\sum\limits_{n\geq 1} \omega_n^{(2)}\mu^n \tau^n(u+d_0)^n =0.
\end{align*}
The function $F(u,\mu,\tau)$ is real-analytic for small  $|u|,$ $|\mu|,$ $|\tau|$ and by the definition of $d_0,$ 
$$
F(0,0,0)=0\quad\text{and}\quad 
\frac{\p F}{\p u}(0,0,0)=-4\pi^2 J_0v(\vec{\pi})^2/d_0 < 0.
$$
Then by the implicit function theorem in the analytical case for sufficiently small $|\mu|$ and $|\tau|$ there exists a unique $u=u(\mu,\tau)$ solving $F(u,\mu,\tau)\equiv0$ and is given by the absolutely convergent series
\begin{equation}\label{def_u_oshkormas1}
u=\sum\limits_{n,m\geq0} c_{nm}\,\mu^n\tau^m,
\end{equation}
where $\{c_{nm}\}$ are real coefficients. Since $u(0)=0,$ $c_{00}=0.$ Inserting the representation \eqref{def_u_oshkormas1} of $u$ in the expression of $\alpha$ in \eqref{alpha_d299} we get \eqref{asymp_at_v2pi0}.

Before solving \eqref{eq_v_ten01} and \eqref{eq_v_ten02} in $\alpha$ we observe that the function $\lambda:=\lambda(\cdot)$ is continuous and satisfies
$$
\lambda(0):=\lim\limits_{\alpha\searrow0} \lambda(\alpha)=0.
$$
By continuity, there exists $\alpha_1>0$ such that $\lambda(\alpha)\in(0,\mu_v^0/2)$ for all $\alpha\in(0,\alpha_1).$ For such $\alpha$ we can write
\begin{equation}\label{bir_taqsim_lambda0}
\frac{\lambda}{\mu_v^0\lambda +  (\mu_v^0)^2} = \frac{\lambda}{(\mu_v^0)^2(1 + \lambda/\mu_v^0)} = \frac{\lambda}{(\mu_v^0)^2}\sum\limits_{n\ge0} (-1)^n\, \frac{\lambda^n}{(\mu_v^0)^n}, 
\end{equation}

(b) Since $\omega_1^{(1)}>0$ in \eqref{eq_v_ten01}, setting
\begin{equation*}
\alpha:=\tau\Big(\tfrac{1}{\omega_1^{(1)}\,(\mu_v^0)^2}  + u\Big),\quad \tau:=-\frac{\lambda}{\ln \lambda},\quad \sigma:=-\frac{\ln\ln \lambda^{-1}}{\ln\lambda},
\end{equation*}
in \eqref{eq_v_ten01} and using \eqref{bir_taqsim_lambda0}, as in (a) we get \eqref{asymp_at_vpi0mas}.

(c) Since $\omega_1^{(2)}<0$ in \eqref{eq_v_ten02}, setting
\begin{equation*}
\alpha:=\lambda\Big( -\tfrac{1}{\omega_1^{(2)} (\mu_v^0)^2}  + u\Big),\quad \theta:=-\lambda\ln \lambda,
\end{equation*} 
in \eqref{eq_v_ten02} and using \eqref{bir_taqsim_lambda0}, as in (a) we get \eqref{asymp_at_vpi0}.
\end{proof}

Finally we study the threshold resonances and threshold eigenfunctions.

\begin{proposition}\label{prop:reson_rank1}
Let $v(\vec\pi)=0$ and
$$
f_v(p) = \frac{v(p)}{\emax - \dispersion(p)}.
$$
\begin{itemize}
\item[(a)] Let $\nabla v(\vec\pi)\ne0.$ Then $f_v\in L^1(\T^2)\setminus L^2(\T^2).$

\item[(b)] Let $\nabla v(\vec\pi)=0.$ Then $f_v\in L^2(\T^2).$
\end{itemize}
\end{proposition}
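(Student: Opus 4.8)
The plan is to reduce everything to the local behavior of $\dispersion$ near its unique maximum $\vec\pi$, since away from $\vec\pi$ the denominator $\emax-\dispersion(p)$ is bounded below by a positive constant and $v$ is bounded, so $f_v$ is bounded (hence in $L^1\cap L^2$) on $\T^2\setminus U(\vec\pi)$. Thus the membership of $f_v$ in $L^1$ or $L^2$ is governed entirely by the integrability of $|f_v(p)|$ resp. $|f_v(p)|^2$ on a small neighborhood of $\vec\pi$. On such a neighborhood I would use the Morse coordinates $\psi$ from Remark \ref{rem:morse_lemma}: substituting $p=\psi(y)$ with $y\in B_\gamma(0)\subset\R^2$, the Jacobian $J(\psi(y))$ is bounded above and below by positive constants, and $\emax-\dispersion(\psi(y))=|y|^2$ exactly. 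Hence the integrals transform (up to constants) into $\int_{B_\gamma(0)}\frac{|v(\psi(y))|}{|y|^2}\,\d y$ and $\int_{B_\gamma(0)}\frac{|v(\psi(y))|^2}{|y|^4}\,\d y$, and I must estimate the numerators using the vanishing hypotheses on $v$ at $\vec\pi$.

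For part (a), since $v$ is real-analytic on $\T^2$ with $v(\vec\pi)=0$ but $\nabla v(\vec\pi)\neq0$, the composition $w(y):=v(\psi(y))$ is real-analytic near $0$ with $w(0)=0$ and $\nabla w(0)=(D\psi(0))^{T}\nabla v(\vec\pi)\neq0$ (as $D\psi(0)$ is invertible, the Jacobian being positive). Therefore $w(y)=\ell(y)+O(|y|^2)$ where $\ell$ is a nonzero linear form, so $|w(y)|\le C|y|$, giving $\int_{B_\gamma(0)}\frac{|w(y)|}{|y|^2}\d y\le C\int_{B_\gamma(0)}\frac{\d y}{|y|}<\infty$ by passing to polar coordinates ($\int_0^\gamma r^{-1}\cdot r\,\d r<\infty$). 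Hence $f_v\in L^1$. For the failure of $L^2$, I would show the lower bound: since $\ell\not\equiv0$, on a conical sector $S\subset B_\gamma(0)$ around a direction where $|\ell|\ge c|y|$, we have $|w(y)|\ge c|y|/2$ for $|y|$ small, so $\int_S\frac{|w(y)|^2}{|y|^4}\d y\ge c'\int_S\frac{\d y}{|y|^2}=+\infty$ (in polar coordinates $\int_0^\gamma r^{-2}\cdot r\,\d r=\int_0^\gamma r^{-1}\d r=+\infty$). Thus $f_v\notin L^2$.

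For part (b), with $v(\vec\pi)=0$ and $\nabla v(\vec\pi)=0$, the real-analytic function $w(y)=v(\psi(y))$ satisfies $w(0)=0$, $\nabla w(0)=0$, so by Taylor's theorem $|w(y)|\le C|y|^2$ near $0$. Then $\int_{B_\gamma(0)}\frac{|w(y)|^2}{|y|^4}\d y\le C^2\int_{B_\gamma(0)}\frac{|y|^4}{|y|^4}\d y=C^2|B_\gamma(0)|<\infty$, so $f_v\in L^2(\T^2)$ (and a fortiori in $L^1$ on the bounded torus). The main obstacle is a bookkeeping matter rather than a conceptual one: making precise that the chain rule identity $\nabla(v\circ\psi)(0)=(D\psi(0))^{T}\,\nabla v(\vec\pi)$ together with invertibility of $D\psi(0)$ transfers the vanishing/non-vanishing of derivatives of $v$ at $\vec\pi$ to those of $w$ at $0$, and confirming that the Morse substitution is a genuine bi-analytic change of variables on $U(\vec\pi)$ so the Jacobian stays pinched between two positive constants; both are guaranteed by Remark \ref{rem:morse_lemma} but should be invoked explicitly. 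One should also note the global part (away from $\vec\pi$) uses only continuity of $v$ and the strict positivity of $\emax-\dispersion$ there, which follows since $\vec\pi$ is the unique maximum.
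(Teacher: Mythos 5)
Your proof is correct and follows essentially the same route as the paper's: localize at the unique maximum $\vec\pi$, pass to Morse coordinates so that $\emax-\dispersion(\psi(y))=|y|^2$, and deduce the integrability class from the order of vanishing of $v$ at $\vec\pi$. The one place where you are more careful than the paper is in establishing that $f_v\notin L^2$ in case (a): the paper simply states that ``$v(p)^2$ behaves like $(p-\vec\pi)^2$,'' which by itself only gives an upper bound, whereas the lower bound is what forces divergence; your argument restricting to a sector where the linear form $\ell$ stays comparable to $|y|$ is exactly the missing detail that makes the divergence rigorous.
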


\begin{proof}
Since both
$v(p)^2$ and $\dispersion(q) - \emax$ behave like $(p - \vec\pi)^2$ near $\vec\pi,$ repeating a similar argument to Proposition \ref{prop:kernel_asymptotics_max} with $v=|v|$ we get
$$
\int_{\T^2} |f_v|\d p = \lim\limits_{z\to\emax} \int_{\T^2} \frac{|v(p)|\d p}{z - \dispersion(p)}<+\infty,
$$
hence, $f_v\in L^1(\T^2).$  Now if $\nabla v(\vec\pi)\ne0,$ then
$v(p)^2$ behaves like $(p-\vec\pi)^2$ and $(\dispersion(q) - \emax)^2$ behaves like $(p - \vec\pi)^4$ near $\vec\pi,$ hence,
$$
\int_{\T^2} |f_v|^2\d p = \lim\limits_{z\to\emax} \int_{\T^2} \frac{|v(p)|\d p}{z - \dispersion(p)}=+\infty,
$$
hence, $f_v\notin L^2(\T^2).$
Finally, if $\nabla v(\vec\pi)=0,$ then both $v(p)^2$ and $(\dispersion(q) - \emax)^2$ behave like $(p - \vec\pi)^4$ near $\vec\pi$ and hence,
$$
\int_{\T^2} |f_v|^2\d p = \lim\limits_{z\to\emax} \int_{\T^2} \frac{|v(p)|^2\d p}{(z - \dispersion(p))^2}<+\infty
$$
so that $f_v\in L^2(\T^2).$
\end{proof}

Now we are ready to prove the main results in case $\omega\in\{\os,\oa,\ea\}$.

\begin{proof}[Proofs of  Theorems \ref{teo:existence_eigenv}- \ref{teo:thresholds} for $\omega\in\{\os,\oa,\ea\}$]
Note that if $b\le0,$ then the perturbation $V_{b}^\omega\le0$ and hence, $\sup\, \sigma(H_b^\omega(\mu))\le \sup\sigma_\ess(H_b^\omega(\mu))=\emax,$ i.e., $\sigma(H_b^\omega(\mu))\cap (\emax,+\infty)=\emptyset.$ Hence, we assume $b>0.$
Let 
$$
v_\omega(p) =
\begin{cases}
\frac{\sqrt{b}}{2\pi}\,(\sin p_1 + \sin p_2) & \text{if $\omega=\os$},\\
\frac{\sqrt{b}}{2\pi}\,(\sin p_1 - \sin p_2) & \text{if $\omega=\oa$},\\
\frac{\sqrt{b}}{2\pi}\,(\cos p_1 - \cos p_2) & \text{if $\omega=\ea$}.
\end{cases}
$$ 
Now the existence of eigenvalues $H_b^\omega(\mu)$ follows from Proposition \ref{prop:existence_rank1}, the asymptotics of eigenvalues follows from Proposition \ref{prop:asym_rank1} and the classification of threshold eigenfunctions and resonances follows from Proposition \ref{prop:reson_rank1}, all propositions applied with $v=v_\omega.$
\end{proof}

\subsection{Discrete spectrum of \text{$H_{a,b}^\es(\mu)$}}

Since $V_{a,b}^{\es}$ is rank-two, from the min-max principle (see also  \cite[Lemma 4.4]{LKhKh:2021}) we get

\begin{proposition}\label{prop:es_eigens_minmax}
The operator $H_{a,b}^\es(\mu)$ has at most two eigenvalues outside the essential spectrum $[\dispersion_{\min},\dispersion_{\max}].$ Moreover:
\begin{itemize}
\item[(a)] if $a,b>0$  resp. $a,b<0,$ then $H_{a,b}^{\es}(\mu) $ has no discrete spectrum below resp. above the essential spectrum;

\item[(b)] if $ab<0,$ then $H_{a,b}^{\es}(\mu) $ has at most one eigenvalue on either sides of the essential spectrum.
\end{itemize}
\end{proposition}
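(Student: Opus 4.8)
The plan is to deduce the entire statement from the explicit rank-two structure of $V_{a,b}^\es$ together with the min-max principle. First I would record the decomposition
$$
V_{a,b}^\es f = a\,\langle f,u_1\rangle\, u_1 + b\,\langle f,u_2\rangle\, u_2,\qquad u_1(p):=\tfrac1{2\pi},\quad u_2(p):=\tfrac1{2\pi}(\cos p_1+\cos p_2),
$$
where $\langle f,g\rangle:=\int_{\T^2}f\overline g\,\d q$ is the inner product of $L^2(\T^2)$. A direct computation gives $\langle u_i,u_j\rangle=\delta_{ij}$, so $\{u_1,u_2\}$ is orthonormal and $V_{a,b}^\es$ is a bounded self-adjoint operator of rank exactly two (with nonzero eigenvalues $a$ and $b$). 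Hence $H_{a,b}^\es(\mu)=H_0+\mu V_{a,b}^\es$ is a finite-rank self-adjoint perturbation of the multiplication operator $H_0$, and since $\sigma(H_0)=[\emin,\emax]$ we have $N_{(\emax,\infty)}(H_0)=N_{(-\infty,\emin)}(H_0)=0$, where $N_I(A)$ denotes the number of eigenvalues (counted with multiplicity) of a bounded self-adjoint operator $A$ lying in the interval $I$; recall also $\sigma_\ess(H_{a,b}^\es(\mu))=[\emin,\emax]$.

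Next comes the standard min-max bookkeeping for finite-rank perturbations. Write $\mu V_{a,b}^\es=W_+-W_-$ with $W_\pm\ge0$: the rank-one piece $\mu a\langle\cdot,u_1\rangle u_1$ is put into $W_+$ if $a>0$ and into $W_-$ if $a<0$, and likewise for the $b$-piece, so that $\rank W_++\rank W_-=\rank V_{a,b}^\es=2$. Since $H_{a,b}^\es(\mu)\le H_0+W_+$ and $H_0+W_+$ is a nonnegative perturbation of $H_0$ of rank $\rank W_+$, the min-max principle gives
$$
N_{(\emax,\infty)}\!\big(H_{a,b}^\es(\mu)\big)\le N_{(\emax,\infty)}\!\big(H_0+W_+\big)\le N_{(\emax,\infty)}(H_0)+\rank W_+=\rank W_+ .
$$
Applying the same two inequalities to $-H_{a,b}^\es(\mu)=-H_0+W_--W_+$ and using $N_{(-\infty,\emin)}(A)=N_{(-\emin,\infty)}(-A)$ yields $N_{(-\infty,\emin)}(H_{a,b}^\es(\mu))\le\rank W_-$. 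Summing the two bounds shows that $H_{a,b}^\es(\mu)$ has at most two eigenvalues outside $[\emin,\emax]$.

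Finally I would specialize according to the sign of $ab$. If $a,b>0$ then $W_-=0$ while $W_+=\mu V_{a,b}^\es$ has rank two, so $H_{a,b}^\es(\mu)$ has no eigenvalue below $\emin$ and at most two above $\emax$; the case $a,b<0$ is obtained by reflecting in $H_0$. If $ab<0$, then exactly one of the two rank-one pieces is positive and the other negative, so $\rank W_+=\rank W_-=1$ and $H_{a,b}^\es(\mu)$ has at most one eigenvalue on each side of $[\emin,\emax]$. The argument is entirely soft; the one point that deserves care is the counting at the edge $\emax$ (and symmetrically $\emin$): here one uses that $\sup\sigma(H_0)=\emax$, so the largest eigenvalue of $H_0$ does not exceed $\emax$, whence the $(\rank W_++1)$-st eigenvalue from the top of $H_0+W_+$ cannot exceed $\emax$ either — this is exactly the min-max inequality $\lambda^{\max}_{k+1}(H_0+W_+)\le\lambda^{\max}_{1}(H_0)$ for a nonnegative rank-$k$ perturbation, cf. \cite[Lemma 4.4]{LKhKh:2021}.
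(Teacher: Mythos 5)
Your proof is correct and follows essentially the same route the paper takes, namely treating $V_{a,b}^\es$ as a rank-two perturbation with sign structure determined by $a$ and $b$, and invoking the min-max principle to bound the eigenvalue count on each side of $[\emin,\emax]$; the paper simply cites \cite[Lemma 4.4]{LKhKh:2021} without spelling it out, whereas you supply the details (in particular the clean observation that $\{u_1,u_2\}$ is orthonormal, so that $V_{a,b}^\es$ has nonzero eigenvalues exactly $a$ and $b$).
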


The following lemma provides an implicit equation for eigenvalues of $H_{a,b}^\es(\mu),$ which is a simple application of Fredholm determinants theory (see e.g., \cite{S:1977_adv.math}).

\begin{lemma}\label{lem:eigen_fred}
A point $z \in \C\setminus [\emin,\emax]$ is an eigenvalue of $H_{a,b}^\es(\mu)$ with multiplicity $m$ if and only if $z$ is a zero with the multiplicity $m$ of the function
\begin{equation}\label{determin00}
\Delta_{a,b}(\mu;z)  =  \Delta_a^{(1)}(\mu;z)\Delta_b^{(2)}(\mu;z) - \mu^2ab \Delta^{(3)}(z)^2,
\end{equation}
where
\begin{align*}
& \Delta_a^{(1)}(\mu;z):= 1-\frac{a\mu}{4\pi^2}\int_{\T^2}\frac{\d q}{z-\dispersion(q)},\\
& \Delta_b^{(2)}(\mu;z) := 1 - \frac{b\mu}{4\pi^2}\int_{\T^2}\frac{(\cos q_1+\cos q_2)^2 \d q}{z-\dispersion(q)}\\
& \Delta^{(3)}(z) :=\frac{1}{4\pi^2}\,\int_{\T^2}\frac{(\cos q_1+\cos q_2 )\d q}{z-\dispersion(q)}.
\end{align*}
\end{lemma}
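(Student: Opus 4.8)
The plan is to exhibit $H_{a,b}^{\es}(\mu) - z$ as a rank-two perturbation of the invertible operator $H_0 - z$ (invertible since $z\notin[\emin,\emax]=\sigma(H_0)$) and to reduce the eigenvalue equation to the vanishing of a $2\times 2$ determinant. First I would write the perturbation in separated form: from the definition
$$
V_{a,b}^{\es}f(p) = \frac{a}{4\pi^2}\int_{\T^2} f(q)\,\d q + \frac{b}{4\pi^2}\,(\cos p_1+\cos p_2)\int_{\T^2}(\cos q_1+\cos q_2)f(q)\,\d q,
$$
so that, setting $\phi_1(p)\equiv 1$ and $\phi_2(p)=\cos p_1+\cos p_2$, we have $V_{a,b}^{\es} = \tfrac{a}{4\pi^2}\,\langle\,\cdot\,,\phi_1\rangle\phi_1 + \tfrac{b}{4\pi^2}\,\langle\,\cdot\,,\phi_2\rangle\phi_2$ (note $\phi_1,\phi_2$ are real, so no complex conjugation issue). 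The key step is then the standard Birman--Schwinger / Schur-complement argument: $z$ is an eigenvalue of $H_0+\mu V_{a,b}^{\es}$ of multiplicity $m$ if and only if the $2\times 2$ matrix
$$
M(\mu;z) = I_2 + \mu\,\mathrm{diag}\Big(\tfrac{a}{4\pi^2},\tfrac{b}{4\pi^2}\Big)\,
\Big(\big\langle (H_0-z)^{-1}\phi_j,\phi_i\big\rangle\Big)_{i,j=1,2}
$$
is singular, with $\dim\Ker M(\mu;z)=m$; equivalently $\det M(\mu;z)=0$ with that multiplicity. Here $(H_0-z)^{-1}$ acts as multiplication by $(\dispersion(q)-z)^{-1}$, so $\langle(H_0-z)^{-1}\phi_j,\phi_i\rangle = \int_{\T^2}\frac{\phi_i(q)\phi_j(q)}{\dispersion(q)-z}\,\d q = -\int_{\T^2}\frac{\phi_i(q)\phi_j(q)}{z-\dispersion(q)}\,\d q$.

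Next I would compute the four matrix entries explicitly and identify them with the functions in the statement. With the sign bookkeeping above, $M_{11} = 1 - \tfrac{a\mu}{4\pi^2}\int_{\T^2}\frac{\d q}{z-\dispersion(q)} = \Delta_a^{(1)}(\mu;z)$; the entry $M_{22} = 1 - \tfrac{b\mu}{4\pi^2}\int_{\T^2}\frac{(\cos q_1+\cos q_2)^2\,\d q}{z-\dispersion(q)} = \Delta_b^{(2)}(\mu;z)$; and the off-diagonal entries are $M_{12} = -\tfrac{a\mu}{4\pi^2}\int_{\T^2}\frac{(\cos q_1+\cos q_2)\,\d q}{z-\dispersion(q)} = -a\mu\,\Delta^{(3)}(z)$ and $M_{21} = -b\mu\,\Delta^{(3)}(z)$. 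Expanding the $2\times 2$ determinant,
$$
\det M(\mu;z) = \Delta_a^{(1)}(\mu;z)\,\Delta_b^{(2)}(\mu;z) - (a\mu\,\Delta^{(3)}(z))(b\mu\,\Delta^{(3)}(z)) = \Delta_a^{(1)}\Delta_b^{(2)} - \mu^2 ab\,\big(\Delta^{(3)}(z)\big)^2 = \Delta_{a,b}(\mu;z),
$$
which is exactly \eqref{determin00}.

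For the multiplicity statement I would invoke the Fredholm-determinant formalism (the perturbation determinant of $H_0+\mu V_{a,b}^{\es}$ relative to $H_0$), cited as \cite{S:1977_adv.math}: because $V_{a,b}^{\es}$ is finite rank, the perturbation determinant $\det\big(I + \mu(H_0-z)^{-1}V_{a,b}^{\es}\big)$ is finite-dimensionally reducible and equals $\det M(\mu;z)$ up to a nowhere-vanishing analytic prefactor on $\C\setminus[\emin,\emax]$ (in fact the reduction is exact), and the order of its zero at $z$ coincides with the algebraic multiplicity of $z$ as an eigenvalue of $H_{a,b}^{\es}(\mu)$; since the operator is self-adjoint, algebraic and geometric multiplicities agree. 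I expect the only genuinely delicate point to be the bookkeeping that ties the \emph{order of vanishing} of $\Delta_{a,b}$ to the \emph{multiplicity} $m$ (rather than merely the equivalence "eigenvalue $\Leftrightarrow$ zero"); everything else is a routine Schur-complement computation. For that point one uses that near a zero $z_0$ the matrix $M(\mu;z)$ is analytic, $\det M$ vanishes to order $m$ iff $\mathrm{rank}\,M(\mu;z_0) = 2-m$ together with the non-degeneracy supplied by the strict monotonicity in $z$ of the diagonal integrals (each $\int_{\T^2}\frac{\phi_i^2}{z-\dispersion}$ is strictly decreasing in $z>\emax$), which rules out higher-order tangencies; alternatively one simply quotes the general multiplicity statement from Fredholm-determinant theory.
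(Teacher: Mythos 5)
The paper does not actually give a proof of this lemma; it simply remarks that it is ``a simple application of Fredholm determinants theory'' and cites Simon's paper. Your proposal is exactly the standard argument the paper leaves implicit: reduce the eigenvalue equation for the rank-two perturbation $H_0+\mu V_{a,b}^{\es}$ to a $2\times 2$ linear system via the Schur complement, compute the Gram-matrix entries using that $(H_0-z)^{-1}$ is multiplication by $(\dispersion(q)-z)^{-1}$, and identify the resulting $2\times 2$ determinant with $\Delta_{a,b}(\mu;z)$. Your determinant computation checks out (whether one puts the diagonal weight matrix on the left or the right gives the same determinant, since $\det(I+\mu DK)=\det(I+\mu KD)$), and the equivalence ``$z$ eigenvalue $\Leftrightarrow$ $\det M(\mu;z)=0$'' is correctly established; this also covers non-real $z$ trivially, since self-adjointness prevents non-real eigenvalues while a vanishing determinant would produce one.

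The one place where your wording is loose is the multiplicity claim ``$\det M$ vanishes to order $m$ iff $\mathrm{rank}\,M(\mu;z_0)=2-m$'': for a general analytic $2\times 2$ matrix-valued function this equivalence fails (e.g.\ $\mathrm{diag}(z,z^2)$ has nullity $2$ at $z=0$ but its determinant vanishes to order $3$). Your hedging clause — invoking the strict monotonicity in $z$ of the integrals $\int \phi_i^2/(z-\dispersion)$ to rule out higher-order tangencies — is the correct remedy, but it would need to be made precise (e.g.\ via the Herglotz-type structure of the $2\times 2$ Birman--Schwinger matrix for real $z>\emax$, which forces its eigenvalues to be strictly monotone in $z$). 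Alternatively, and more in the spirit of the paper, one simply quotes the order-of-vanishing statement for finite-rank perturbation determinants from \cite{S:1977_adv.math}, as you suggest in your final sentence; that is exactly what the authors do. So the proposal is essentially complete and correct, modulo tightening that one ``iff.''
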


\noindent 
The function $\Delta_{a,b}(\mu;z)$ is called the \emph{Fredholm determinant} associated to $H_{a,b}^\es(\mu).$

\begin{remark}\label{rem:eigenvectorsss}
Considering the eigenvalue equation $H_{a,b}^\es(\mu) f= z_0f$ we observe that the eigenfunctions of $H_{a,b}^\es(\mu)$  are of the form
$$
f(p)= \frac{c_1 + c_2(\cos p_1 +\cos p_2)}{z_0 - \dispersion(p)}
$$
for some constants $c_1$ and $c_2.$ Using the equality $\Delta_{a,b}(\mu;z_0)=0$ one can readily check
\begin{itemize}
 \item if $\Delta^{(3)}(z_0)\ne0,$ then we can take $c_1=\Delta_a^{(1)}(\mu;z_0)$ and $c_2=\Delta^{(3)}(z_0);$

 \item if $\Delta^{(3)}(z_0)=0$ and $\Delta_a^{(1)}(\mu;z_0)\ne0,$ then we can take $c_1=0$ and $c_2=1;$

 \item if $\Delta^{(3)}(z_0)=0$ and $\Delta_b^{(2)}(\mu;z_0)\ne0,$ then we can take $c_1=1$ and $c_2=0;$

 \item if $\Delta_a^{(1)}(\mu;z_0) = \Delta_b^{(2)}(\mu;z_0) = \Delta^{(3)}(z_0)=0,$ then $z_0$ is the eigenvalue of multiplicity two and the corresponding eigenfunctions are
 $$
 f_1(p)= \frac{1}{z_0 - \dispersion(p)}\quad\text{and}\quad
 f_2(p)= \frac{\cos p_1 + \cos p_2}{z_0 - \dispersion(p)}.
$$
\end{itemize}
\end{remark}

Let us establish some properties of the zeros of $\Delta_{a,b}(\mu;\cdot).$ Since $\mu\mapsto \Delta_{a,b}(\mu;z)=0$ is quadratic with nonzero constant term, for any $z>\emax$ the equation $\Delta_{a,b}(\cdot;z)=0$ has at most two solutions.

\begin{proposition}[\textbf{Zeros of higher multiplicity}]\label{prop:es_zero_high_multos}
A number $z_0>\emax$ is a zero of $\Delta_{a,b}(\mu;\cdot)$ of multiplicity two if and only if
$$
\Delta_a^{(1)}(\mu;z_0)=\Delta_b^{(2)}(\mu;z_0)=\Delta^{(3)}(z_0)=0.
$$
\end{proposition}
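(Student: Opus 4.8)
The plan is to reduce both implications to a statement about the eigenspace of $H_{a,b}^\es(\mu)$ at $z_0$ and then to settle that statement by elementary linear algebra. By Lemma~\ref{lem:eigen_fred}, the order of vanishing of the (real-analytic near $z_0$) function $\Delta_{a,b}(\mu;\cdot)$ at a point $z_0>\emax$ equals the multiplicity of $z_0$ as an eigenvalue of the self-adjoint operator $H_{a,b}^\es(\mu)$, that is $\dim\Ker\big(H_{a,b}^\es(\mu)-z_0\big)$; and by Proposition~\ref{prop:es_eigens_minmax} this dimension is at most two. Hence it suffices to characterise exactly when this eigenspace is two-dimensional.

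To do this, I would first record, as in Remark~\ref{rem:eigenvectorsss}, that since the range of $V_{a,b}^\es$ is spanned by the two functions $1$ and $\cos p_1+\cos p_2$, every $f\in L^2(\T^2)$ solving $H_{a,b}^\es(\mu)f=z_0f$ is necessarily of the form
$$
f(p)=\frac{c_1+c_2(\cos p_1+\cos p_2)}{z_0-\dispersion(p)}
$$
for a unique pair $(c_1,c_2)\in\C^2$; uniqueness holds because $1$ and $\cos p_1+\cos p_2$ are linearly independent in $L^2(\T^2)$ and $z_0-\dispersion>0$ a.e. Substituting this ansatz into the eigenvalue equation, rewritten as $\mu V_{a,b}^\es f=(z_0-\dispersion)f$, and matching the coefficients of $1$ and of $\cos p_1+\cos p_2$, one obtains the homogeneous linear system $M(\mu;z_0)\,(c_1,c_2)^{\top}=0$ with
$$
M(\mu;z_0)=\begin{pmatrix}\Delta_a^{(1)}(\mu;z_0) & -\mu a\,\Delta^{(3)}(z_0)\\[1mm] -\mu b\,\Delta^{(3)}(z_0) & \Delta_b^{(2)}(\mu;z_0)\end{pmatrix},
$$
whose determinant is precisely $\Delta_{a,b}(\mu;z_0)$ from \eqref{determin00}. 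Consequently the map $(c_1,c_2)\mapsto f$ is a linear isomorphism from $\Ker M(\mu;z_0)$ onto $\Ker\big(H_{a,b}^\es(\mu)-z_0\big)$, so the two kernels have the same dimension.

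The proposition then follows: $z_0$ is a zero of $\Delta_{a,b}(\mu;\cdot)$ of multiplicity two iff $\dim\Ker M(\mu;z_0)=2$ iff $M(\mu;z_0)$ is the zero matrix, which --- using $\mu>0$ and $a,b\ne0$ to cancel the off-diagonal factors $\mu a$ and $\mu b$ --- is equivalent to $\Delta_a^{(1)}(\mu;z_0)=\Delta_b^{(2)}(\mu;z_0)=\Delta^{(3)}(z_0)=0$. As an alternative for the ``if'' direction one may instead differentiate \eqref{determin00} in $z$: if all three factors vanish at $z_0$ then $\partial_z\Delta_{a,b}(\mu;z_0)=0$ as well, so the order of the zero is at least two, hence exactly two by Proposition~\ref{prop:es_eigens_minmax} and Lemma~\ref{lem:eigen_fred}.

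The argument is short, and the only step requiring any care is the bookkeeping in the second paragraph --- reconciling the order of vanishing of the Fredholm determinant, the (algebraic $=$ geometric, by self-adjointness) eigenvalue multiplicity, and $\dim\Ker M$. Once Lemma~\ref{lem:eigen_fred}, Proposition~\ref{prop:es_eigens_minmax} and Remark~\ref{rem:eigenvectorsss} are in hand, everything else is a routine verification.
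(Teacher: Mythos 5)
Your argument is correct, and it takes a genuinely different route from the one in the paper. The paper proves only the ``only if'' direction, and does so by a direct calculus computation on the explicit function $\Delta_{a,b}(\mu;\cdot)$: assuming $\Delta_{a,b}(\mu;z_0)=\partial_z\Delta_{a,b}(\mu;z_0)=0$ and (for contradiction) $\Delta^{(3)}(z_0)\ne 0$, the authors write $\Delta_a^{(1)}=c\,\Delta^{(3)}$ and $\Delta_b^{(2)}=\frac{\mu^2 ab}{c}\,\Delta^{(3)}$ for some $c\ne 0$, observe that the $z$-derivative of $\Delta_{a,b}$ then factors into $\frac{b\mu\Delta^{(3)}(z_0)}{4c\pi^2}$ times the \emph{strictly positive} integral $\int_{\T^2}\frac{(a\mu+c(\cos q_1+\cos q_2))^2}{(z_0-\dispersion(q))^2}\d q$, and reach a contradiction; a shorter version of the same derivative-factorisation then eliminates $\Delta_a^{(1)}\ne 0$ and $\Delta_b^{(2)}\ne 0$. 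Your proof instead goes through the spectral picture: by Lemma~\ref{lem:eigen_fred} the order of the zero equals the eigenvalue multiplicity, which by the explicit form of rank-two eigenfunctions equals $\dim\Ker M(\mu;z_0)$ for the $2\times 2$ matrix $M$ with $\det M=\Delta_{a,b}$, and a $2\times 2$ matrix has two-dimensional kernel exactly when it is the zero matrix. Your route is shorter and more conceptual, and it simultaneously proves both implications and sharpens Remark~\ref{rem:eigenvectorsss}; the paper's route is more elementary in that the ``only if'' direction never invokes Lemma~\ref{lem:eigen_fred} or the operator at all, but works directly with derivatives of the analytic function and hinges on noticing a perfect square in the integrand. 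One small remark: your invocation of Proposition~\ref{prop:es_eigens_minmax} to cap the multiplicity at two is not actually needed in your main chain of equivalences, since $\dim\Ker M\le 2$ is automatic for a $2\times 2$ matrix; it is needed only in your alternative derivative-based argument for the ``if'' direction, which is precisely the implication the paper treats as obvious.
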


\begin{proof}
It is enough to prove the ``only if'' implication. Assume that
\begin{equation*}
 \Delta_{a,b}(\mu; z_0) = \tfrac{\partial }{\partial z}\big|_{z=z_0}\Delta_{a,b}(\mu; z_0)=0 \quad \text{for some $z_0>\emax.$}
\end{equation*} 
 By contradiction, assume that  $\Delta^{(3)}(z_0)\ne 0.$  Then $\Delta_a^{(1)}(\mu; z_0)\Delta_b^{(2)}(\mu; z_0)=\mu^2ab\Delta^{(3)}(z_0)\ne 0,$ and hence, there exists $c\ne 0$ such that $\Delta_a^{(1)}(\mu; z_0) = c\Delta^{(3)}(z_0)$ and $\Delta_b^{(2)}(\mu; z_0) = \frac{\mu^2ab}{c}\,\Delta^{(3)}(z_0).$ Then 
\begin{align*}
0=\tfrac{\partial}{\partial z}\big|_{z=z_0}\Delta_{a,b}(\mu; z)
=& \frac{b\mu\Delta^{(3)}(z_0)}{4c\pi^2}\,  \int_{\T^2} \frac{(a\mu + c(\cos q_1+\cos q_2)^2)\d q}{(z_0-\dispersion(q))^2}\ne 0,
\end{align*}
a contradiction. Hence, $\Delta^{(3)}(z_0)=0$ and so $\Delta_a^{(1)}(\mu; z_0)\Delta_b^{(2)}(\mu;z_0)=0.$ If $\Delta_a^{(1)} (\mu;z_0)\ne0,$ then again
$$
0=\tfrac{\partial}{\partial z}\big|_{z=z_0}\Delta_{a,b}(\mu; z)
= \frac{b\mu \Delta_a^{(1)}(\mu;z_0)}{4\pi^2}\int_{\T^2}\frac{(\cos q_1+\cos q_2)^2 \d q}{(z_0-\dispersion(q))^2} \ne0,
$$
a contradiction. Similarly, one shows $\Delta_b^{(2)}(\mu;z_0)=0$.
\end{proof}

Further in Section \ref{subsec:multiple_eigen} we provide a function $\dispersion$ and numbers $a,b,\mu>0$ for which $\Delta_{a,b}(\mu;\cdot)$ has a zero of multiplicity two.

\begin{lemma}[\textbf{Number of zeros of $\Delta^{(3)}$}]\label{lem:delta3_nonzero99}

The set of zeros of the function $z\mapsto \Delta^{(3)}(z)$ in $(\emax,+\infty)$ is locally finite, i.e., cannot have a (finite) limit point.

\end{lemma}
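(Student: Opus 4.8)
The plan is to show that $\Delta^{(3)}$ extends to an analytic function near every point of $(\emax,+\infty)$ and is not identically zero on this interval; then the identity theorem for analytic functions of one variable immediately gives that the zero set is locally finite.

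First I would observe that for $z>\emax$ the denominator $z-\dispersion(q)$ is bounded below by $z-\emax>0$ on all of $\T^2$, so the integrand $(\cos q_1+\cos q_2)/(z-\dispersion(q))$ is a bounded continuous function of $q$ and depends analytically on $z$ in a complex neighbourhood of any point $z_0\in(\emax,+\infty)$ (one can differentiate under the integral sign, the differentiated integrals converging uniformly for $z$ in a small disc around $z_0$). Hence $z\mapsto\Delta^{(3)}(z)$ is real-analytic on $(\emax,+\infty)$, indeed the restriction of a function holomorphic on $\C\setminus[\emin,\emax]$.

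Next I would rule out $\Delta^{(3)}\equiv 0$ on $(\emax,+\infty)$. The cleanest way is to examine the behaviour as $z\to+\infty$: expanding $\frac{1}{z-\dispersion(q)}=\frac1z+\frac{\dispersion(q)}{z^2}+O(z^{-3})$ gives
$$
\Delta^{(3)}(z) = \frac{1}{4\pi^2 z}\int_{\T^2}(\cos q_1+\cos q_2)\,\d q + \frac{1}{4\pi^2 z^2}\int_{\T^2}(\cos q_1+\cos q_2)\dispersion(q)\,\d q + O(z^{-3}).
$$
The leading integral vanishes, but the $z^{-2}$-coefficient equals $\frac{1}{4\pi^2}\int_{\T^2}(\cos q_1+\cos q_2)\dispersion(q)\,\d q$, which I expect to be nonzero for the dispersion relations under consideration; if it happens to vanish for a particular $\dispersion$ one simply passes to the first nonvanishing Fourier-type moment $\int_{\T^2}(\cos q_1+\cos q_2)\dispersion(q)^k\,\d q$. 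Alternatively, and independently of $\dispersion$, one can argue directly on $(\emax,+\infty)$: writing $\Delta^{(3)}(z)=\frac{1}{4\pi^2}\int_{\T^2}\frac{\cos q_1+\cos q_2}{z-\dispersion(q)}\d q$ and differentiating, $\frac{\d}{\d z}\Delta^{(3)}(z)=-\frac{1}{4\pi^2}\int_{\T^2}\frac{\cos q_1+\cos q_2}{(z-\dispersion(q))^2}\d q$; since $\dispersion$ attains its max only at $\vec\pi$ where $\cos\pi+\cos\pi=-2<0$, for $z$ close to $\emax$ the integral is dominated by the singular region near $\vec\pi$ and is therefore strictly negative, so $\Delta^{(3)}$ is strictly increasing near $\emax$ and thus not identically zero.

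Finally, since $\Delta^{(3)}$ is analytic and not identically zero on the connected interval $(\emax,+\infty)$, its zero set has no accumulation point in $(\emax,+\infty)$ by the identity theorem; this is exactly the ``locally finite'' statement. The main obstacle is the second step — certifying $\Delta^{(3)}\not\equiv 0$ using only Hypothesis \ref{hyp:main} on $\dispersion$ — and I expect the monotonicity argument near $\emax$ (exploiting that $\cos q_1+\cos q_2$ is negative at the unique maximum $\vec\pi$ of $\dispersion$, so the dominant contribution to $\partial_z\Delta^{(3)}$ has a definite sign) to be the robust way to do it without extra assumptions on $\dispersion$.
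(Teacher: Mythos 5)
Your proof is correct, and it is genuinely different from the paper's. The paper argues by contradiction in a more roundabout way: if $\Delta^{(3)}\equiv 0$, then for $a=b=1$ the Fredholm determinant factors as $\Delta_{1,1}(\mu;z)=\Delta_1^{(1)}(\mu;z)\,\Delta_1^{(2)}(\mu;z)$, so $H_{1,1}^\es(\mu)$ would have two eigenvalues above the essential spectrum for \emph{every} $\mu>0$; that contradicts \cite[Theorem~1.4]{KhLA:2021_jmaa}, which says $\hat H_{1,1}(\mu)$ has a unique eigenvalue for small $\mu>0$. Your route is more elementary and self-contained: you get $\Delta^{(3)}\not\equiv 0$ directly from the local blow-up at $z=\emax$, using only that $\cos q_1+\cos q_2$ equals $-2\ne 0$ at $\vec\pi$. (In fact the paper already records exactly the needed logarithmic asymptotics of $\Delta^{(3)}$ in equation \eqref{v_teng_cos} as a consequence of Proposition~\ref{prop:kernel_asymptotics_max}, which gives the even shorter argument $\Delta^{(3)}(z)=2J_0\ln(z-\emax)+O(1)\to-\infty$; either that or your derivative computation works.) Your ``first attempt'' via the $z\to+\infty$ expansion is indeed not robust --- as you note, the moment $\int(\cos q_1+\cos q_2)\dispersion(q)\,\d q$ could vanish for admissible $\dispersion$ --- but you correctly discard it in favour of the behaviour near $\emax$, which is the argument that closes the gap. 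Net: same conclusion, different (and arguably cleaner) non-vanishing certificate; the paper trades this for an appeal to an external uniqueness result for small coupling.
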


\begin{proof}
Indeed, otherwise, by analyticity we would have
$
\Delta^{(3)}\equiv0
$
in $\C\setminus [\emin,\emax].$ For $a=b=1$ consider the operator $H_{1,1}(\mu).$ As $\Delta^{(3)}\equiv 0,$ the Fredholm determinant $\Delta_{1,1}(\mu;z)$ associated to $H_{1,1}^\es(\mu).$   factors out as
$$
\Delta_{1,1}(\mu;z)  =  \Delta_1^{(1)}(\mu;z)\Delta_1^{(2)}(\mu;z).
$$
In view of Proposition \ref{prop:existence_rank1} applied with $v\equiv1$ resp. $v =\cos p_1 + \cos p_2,$ for any $\mu>0$ the  function $\Delta_1^{(1)}(\mu;\cdot)$ resp. $\Delta_1^{(2)}(\mu;\cdot)$ has a unique zero in $(\emax,+\infty).$ Thus, by Lemma \ref{lem:eigen_fred} the operator $H_{1,1}^\es(\mu)$ has two eigenvalues for any $\mu>0.$ Then by \eqref{sigma_Hmu} the operator $H_{1,1}(\mu)$ has  at least two eigenvalues for any $\mu>0.$ However, this contradicts to \cite[Theorem 1.4]{KhLA:2021_jmaa} since for sufficiently small $\mu>0$ the operator $H_{1,1}(\mu)$ has a unique eigenvalue in $L^2(\T^2).$
\end{proof}

\begin{remark}
If $\dispersion$ is $\pi$-periodic in each coordinates, then $\Delta^{(3)}(z)\equiv0$ in $\C\setminus [-\emin,\emax].$ However, such $\dispersion(\cdot)$  do not satisfy Hypothesis \ref{hyp:main} as it has at least two maximum points in $\T^2$.
\end{remark}

\begin{lemma}[\textbf{Monotonicity of $C^1$-zeros}]\label{lem:zeros_delta_es}
Let for some $\mu_0>0$ and $\epsilon>0$ there exist
$E\in C^1([\mu_0-\epsilon,\mu_0+\epsilon])$ such that
$
\Delta_{a,b}(\mu;E(\mu))\equiv0\quad\text{in $(\mu_0-\epsilon,\mu_0+\epsilon).$}
$
Then $E'(\cdot)>0$ in $(\mu_0-\epsilon,\mu_0+\epsilon).$
\end{lemma}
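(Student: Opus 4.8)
The plan is to differentiate the identity $\Delta_{a,b}(\mu;E(\mu))\equiv 0$ in $\mu$ and solve for $E'(\mu)$ via the implicit function theorem, i.e.\ to write
\[
E'(\mu) = -\,\frac{\partial_\mu \Delta_{a,b}(\mu;E(\mu))}{\partial_z \Delta_{a,b}(\mu;z)\big|_{z=E(\mu)}},
\]
and then show the numerator is negative and the denominator is positive for every $\mu\in(\mu_0-\epsilon,\mu_0+\epsilon)$. For this to be legitimate I first need $\partial_z\Delta_{a,b}(\mu;E(\mu))\ne 0$; this is where Proposition~\ref{prop:es_zero_high_multos} enters. If $\partial_z\Delta_{a,b}(\mu;E(\mu))=0$ at some $\mu$, then $E(\mu)$ is a zero of multiplicity $\ge 2$, so by that proposition $\Delta_a^{(1)}(\mu;E(\mu))=\Delta_b^{(2)}(\mu;E(\mu))=\Delta^{(3)}(E(\mu))=0$. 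I would then argue this cannot happen on a whole interval, or handle it as an isolated point: actually the cleanest route is to compute $\partial_z\Delta_{a,b}$ directly (as in the proof of Proposition~\ref{prop:es_zero_high_multos}) and show it has a definite sign whenever the constraint $\Delta_{a,b}(\mu;E(\mu))=0$ holds, sidestepping the multiplicity issue.

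Concretely, differentiating $\Delta_a^{(1)}$, $\Delta_b^{(2)}$, $\Delta^{(3)}$ in $z$ produces integrals of the form $\int_{\T^2}(\cdots)(z-\dispersion(q))^{-2}\d q$ with nonnegative integrands, so each of $\partial_z\Delta_a^{(1)}$, $\partial_z\Delta_b^{(2)}$ is negative times $a$ resp.\ $b$ and $\partial_z\Delta^{(3)}<0$. Using $\Delta_{a,b}(\mu;E(\mu))=0$, i.e.\ $\Delta_a^{(1)}\Delta_b^{(2)}=\mu^2 ab(\Delta^{(3)})^2$, one substitutes to express $\partial_z\Delta_{a,b}=\Delta_b^{(2)}\partial_z\Delta_a^{(1)}+\Delta_a^{(1)}\partial_z\Delta_b^{(2)}-2\mu^2 ab\,\Delta^{(3)}\partial_z\Delta^{(3)}$ in a form whose sign can be read off; the computation in Proposition~\ref{prop:es_zero_high_multos} already shows this combination is nonzero on the zero set, and tracking signs shows it is in fact strictly negative (for $E(\mu)>\emax$, decreasing behaviour of $\Delta_{a,b}$ in $z$ is consistent with $\Delta_{a,b}\to 1>0$ as $z\to+\infty$ and $\Delta_{a,b}(\mu;E(\mu))=0$). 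Similarly $\partial_\mu\Delta_{a,b}=\Delta_b^{(2)}\partial_\mu\Delta_a^{(1)}+\Delta_a^{(1)}\partial_\mu\Delta_b^{(2)}-2\mu ab(\Delta^{(3)})^2$, and since $\partial_\mu\Delta_a^{(1)}=-\tfrac{a}{4\pi^2}\int(z-\dispersion)^{-1}$, $\partial_\mu\Delta_b^{(2)}=-\tfrac{b}{4\pi^2}\int(\cos q_1+\cos q_2)^2(z-\dispersion)^{-1}$, on the zero set this simplifies to a strictly positive multiple of $a$, $b$, and $\mu$-dependent positive integrals (again using $\Delta_a^{(1)}\Delta_b^{(2)}=\mu^2 ab(\Delta^{(3)})^2\ge 0$ to fix the sign). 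Combining, $E'(\mu)=-\partial_\mu\Delta_{a,b}/\partial_z\Delta_{a,b}>0$.

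The main obstacle I anticipate is the sign bookkeeping in the two bilinear expressions for $\partial_z\Delta_{a,b}$ and $\partial_\mu\Delta_{a,b}$ when $ab$ may be negative and one of $\Delta_a^{(1)}$, $\Delta_b^{(2)}$ may be negative: one cannot naively say each summand has a fixed sign. The trick is to use the relation $\Delta_a^{(1)}(\mu;E(\mu))\,\Delta_b^{(2)}(\mu;E(\mu))=\mu^2ab\,\Delta^{(3)}(E(\mu))^2\ge 0$, which forces $\Delta_a^{(1)}$ and $\Delta_b^{(2)}$ to have the same sign (or one to vanish), and then factor out that common sign. A slightly delicate subcase is $\Delta^{(3)}(E(\mu))=0$, where the product relation degenerates; there one of $\Delta_a^{(1)},\Delta_b^{(2)}$ vanishes and the derivative formulas collapse to a single-term rank-one computation already covered by the reasoning behind Proposition~\ref{prop:existence_rank1}, giving $E'>0$ directly. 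Once the sign of the common factor is pinned down (it is determined by requiring consistency with $\Delta_{a,b}(\mu;z)\to 1$ as $z\to\infty$), the rest is routine, and one concludes $E'(\mu)>0$ on all of $(\mu_0-\epsilon,\mu_0+\epsilon)$.
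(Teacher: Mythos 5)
Your overall plan (implicit differentiation, use the constraint $\Delta_a^{(1)}\Delta_b^{(2)}=\mu^2ab(\Delta^{(3)})^2$ to control signs) is the right one and matches the paper's, but there is a genuine gap in how you propose to control the signs. You state as your fallback the claim that $\partial_z\Delta_{a,b}$ ``has a definite sign whenever the constraint holds'', pinned down via $\Delta_{a,b}(\mu;z)\to 1$ as $z\to\infty$. This is false. When $a,b>0$ and $\mu>\mu^0_{\es}$ the function $\Delta_{a,b}(\mu;\cdot)$ has \emph{two} zeros $z_2<z_1$ in $(\emax,+\infty)$; since $\Delta_{a,b}\to+\infty$ as $z\searrow\emax$ and $\Delta_{a,b}\to 1$ as $z\to+\infty$, it crosses downward at $z_2$ and upward at $z_1$, so $\partial_z\Delta_{a,b}(\mu;z_2)<0$ while $\partial_z\Delta_{a,b}(\mu;z_1)>0$. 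The ``$\to 1$'' asymptotics only constrains the sign at the outermost zero, not on the whole zero set, so the ``numerator negative / denominator positive'' decomposition fails on the other branch.

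The paper's resolution is precisely to \emph{not} try to fix either sign: using the constraint one writes $\Delta_a^{(1)}=c_\mu\Delta^{(3)}$ and $\Delta_b^{(2)}=\frac{\mu^2ab}{c_\mu}\Delta^{(3)}$ (with $c_\mu\ne 0$ whenever $\Delta^{(3)}(E(\mu))\ne0$), and then both $\partial_\mu\Delta_{a,b}$ and $\partial_z\Delta_{a,b}$ collapse to a common prefactor $\pm\tfrac{b}{c_\mu}\Delta^{(3)}$ times the \emph{same} manifestly positive integral $\int(a\mu+c_\mu(\cos q_1+\cos q_2))^2(E(\mu)-\dispersion(q))^{-k}\,\d q$ (with $k=1,2$ respectively). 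The prefactor cancels in the ratio $E'=-\partial_\mu\Delta_{a,b}/\partial_z\Delta_{a,b}$, and positivity of $E'$ follows without ever knowing the sign of $c_\mu$. You gesture at ``factoring out a common sign'', which is the correct idea, but you then spend it on the wrong claim (pinning down that sign globally) instead of noticing it drops out.

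Your treatment of $\Delta^{(3)}(E(\mu))=0$ is also incomplete. You say one of $\Delta_a^{(1)},\Delta_b^{(2)}$ vanishes and the computation reduces to a rank-one case, but you do not address the situation where \emph{both} vanish, i.e.\ $E(\mu)$ is a zero of multiplicity two (Proposition~\ref{prop:es_zero_high_multos}). There $\partial_z\Delta_{a,b}(\mu;E(\mu))=0$ and the implicit-function quotient is $0/0$; the paper instead invokes Lemma~\ref{lem:delta3_nonzero99} to see that such $\mu$ are isolated (since $E$ cannot be locally constant, being a zero of a quadratic in $\mu$ with constant term $1$) and passes to the limit in the formula for $E'$ using the $C^1$ hypothesis on $E$. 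Without this limiting step your argument does not cover the degenerate points.
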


\begin{proof}
By the definition \eqref{determin00} of $\Delta_{a,b}$
\begin{equation}\label{rio999}
\Delta_a^{(1)}(\mu;E(\mu))\, \Delta_b^{(2)}(\mu;E(\mu)) - \mu^2ab\, \Delta^{(3)}(E(\mu))^2\equiv0\quad\text{in $(\mu_0-\epsilon,\mu_0+\epsilon).$}
\end{equation}
Since $\mu\mapsto \Delta_{a,b}(\mu;z)$ is quadratic, $E(\mu)$ cannot be locally constant.

Assume first that $\Delta^{(3)}(E(\mu))\ne0$ for some $\mu.$ Then by \eqref{rio999} there exists $c_\mu\ne0$ such that $\Delta_a^{(1)}(\mu;E(\mu))=c_\mu\, \Delta^{(3)}(E(\mu))$ and $\Delta_b^{(2)}(\mu;E(\mu)) = \frac{\mu^2 ab}{c_\mu}\,\Delta^{(3)}(E(\mu)).$ Hence, differentiating \eqref{rio999}  in $\mu$ and simplying we get
\begin{equation}\label{Emu_hosila}
E'(\mu) \int_{\T^2}\frac{(a\mu + c_\mu(\cos q_1+\cos q_2))^2\d q}{(E(\mu) - \dispersion(q))^2} = \frac{1}{\mu} \int_{\T^2}\frac{(a\mu + c_\mu(\cos q_1+\cos q_2))^2\d q}{E(\mu) - \dispersion(q)}.
\end{equation}
Since both integrals are positive, $E'(\mu)>0.$

Now assume that $\Delta^{(3)}(E(\mu))=0.$ Since $E(\cdot)$ is not locally constant, by Lemma \ref{lem:delta3_nonzero99} the set of such $\mu$'s are locally finite. Fix any $\bar\mu$ for which $\Delta^{(3)}(E(\bar\mu))=0$ and take any sequence $\mu_k\searrow \bar\mu$ for which \eqref{Emu_hosila} holds with $\mu=\mu_k.$ Then up to a subsequence, the sequence $c_{\mu_k}$ either converges to some finite number $\bar c$ or diverges to $+\infty.$ 

In the first case, writing \eqref{Emu_hosila} with $\mu=\mu_k$ and letting $k\to+\infty$ as well as using the continuity of $E(\cdot)$  and $E'(\cdot)$ we get
$$
E'(\bar \mu) \int_{\T^2}\frac{(a\bar\mu + \bar c(\cos q_1+\cos q_2))^2\d q}{(E(\bar \mu) - \dispersion(q))^2} = \frac{1}{\bar \mu} \int_{\T^2}\frac{(a\bar\mu + \bar c(\cos q_1+\cos q_2))^2\d q}{E(\bar \mu) - \dispersion(q)}
$$
and hence, $E'(\bar\mu)>0.$

In the second case, first dividing \eqref{Emu_hosila} by $c_{\mu_k}$ and then letting $k\to+\infty$ we get
$$
E'(\bar \mu) \int_{\T^2}\frac{(\cos q_1+\cos q_2)^2\d q}{(E(\bar \mu) - \dispersion(q))^2} = \frac{1}{\bar \mu} \int_{\T^2}\frac{( \cos q_1+\cos q_2)^2\d q}{E(\bar \mu) - \dispersion(q)},
$$
hence, again $E'(\bar\mu)>0.$
\end{proof}

Now we are ready to study the existence of zeros of $\Delta_{a,b} (\mu;z)$ or equivalently the existence of eigenvalues of $H_{a,b}^\es(\mu).$

\begin{proposition}\label{prop:zeros_delta_es}
Let $\mu_\es^0$ be given as in Theorem \ref{teo:existence_eigenv}.

\begin{itemize}
 \item[(a)] Assume that $ab<0$ and $a+4b\ge0.$ Then for any $\mu>\mu_\es^0=0$ the function $\Delta_{a,b}(\mu; \cdot)$ has a unique zero in $(\emax,+\infty).$

\item[(b)] Assume that $ab<0$ and $a+4b<0.$ Then for any $\mu>\mu_\es^0>0$ the function $\Delta_{a,b}(\mu; \cdot)$ has a unique zero in  $(\emax,+\infty)$ and  for any $\mu\in(0,\mu_\es^0]$ the function $\Delta_{a,b}(\mu; \cdot)>0$ in $(\emax,+\infty).$

\item[(c)] Assume that $a,b>0$. Then for any $\mu\in(0,\mu_\es^0]$ the function $\Delta_{a,b} (\mu; \cdot)$ has a unique zero in $(\emax,+\infty)$ and for any $\mu>\mu_\es^0 $ the function $\Delta_{a,b} (\mu; \cdot)$ has two zeros in  $(\emax,+\infty).$
\end{itemize}
\end{proposition}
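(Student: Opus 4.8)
The plan is to study, for fixed $\mu>0$, the real-analytic function $z\mapsto\Delta_{a,b}(\mu;z)$ on $(\emax,+\infty)$, whose zeros, counted with multiplicity, are exactly the eigenvalues of $H_{a,b}^{\es}(\mu)$ above $\emax$ by Lemma \ref{lem:eigen_fred}. Abbreviate $I_k(z):=\frac1{4\pi^2}\int_{\T^2}(\cos q_1+\cos q_2)^k(z-\dispersion(q))^{-1}\,\d q$ for $k=0,1,2$, so that $\Delta_a^{(1)}=1-a\mu I_0$, $\Delta_b^{(2)}=1-b\mu I_2$, $\Delta^{(3)}=I_1$, and $\Delta_{a,b}(\mu;z)=1-\mu(aI_0+bI_2)+\mu^2ab\,D$ with $D:=I_0I_2-I_1^2$. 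Here $I_0,I_2>0$ and $D>0$ (strict Cauchy--Schwarz, $\cos q_1+\cos q_2$ being non-constant), and $I_0,I_1,I_2\to0$ as $z\to+\infty$, so $\Delta_{a,b}(\mu;z)\to1$. Two further tools are used throughout: Proposition \ref{prop:es_eigens_minmax}, which bounds by two the number of zeros in $(\emax,+\infty)$ and records the side/sign information of cases (a)--(c); and Proposition \ref{prop:es_zero_high_multos}, by which $z_0$ is a double zero precisely when $\Delta_a^{(1)}(\mu;z_0)=\Delta_b^{(2)}(\mu;z_0)=\Delta^{(3)}(z_0)=0$ --- equivalently, since $I_0,I_2,\mu>0$, precisely when $\mu$ is a double root of the quadratic $\mu'\mapsto\Delta_{a,b}(\mu';z_0)$ (which forces $\sign a=\sign b$). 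Hence for each $z>\emax$ the multiplicity of $z$ as a zero of $\Delta_{a,b}(\mu;\cdot)$ equals the multiplicity of $\mu$ as a root of $\mu'\mapsto\Delta_{a,b}(\mu';z)$.

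Next I describe the roots of the quadratic $\mu'\mapsto\Delta_{a,b}(\mu';z)$, which takes the value $1$ at $\mu'=0$, as $z$ runs over $(\emax,+\infty)$. Using Corollary \ref{cor:integral_finite00} and the limits $2I_0+I_1\to\Theta^*$, $4I_0-I_2\to\kappa_1$, $4I_0+4I_1+I_2\to\gamma_\es^{-1}$ as $z\searrow\emax$ (the last since $(2+\cos q_1+\cos q_2)^2=4+4(\cos q_1+\cos q_2)+(\cos q_1+\cos q_2)^2$), together with the identity $D=I_0(4I_0+4I_1+I_2)-(2I_0+I_1)^2$ and $I_0(z)\to+\infty$, one finds $D(z)\sim\gamma_\es^{-1}I_0(z)\to+\infty$ and $aI_0(z)+bI_2(z)=(a+4b)I_0(z)-b(4I_0-I_2)(z)\sim(a+4b)I_0(z)$ as $z\searrow\emax$, while $D(z),\,aI_0(z)+bI_2(z)\to0$ as $z\to+\infty$. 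Consequently the product $1/(abD)$ of the roots tends to $0$ at both ends; their sum $(aI_0+bI_2)/(abD)$ tends to $\mu_\es^0$ as $z\searrow\emax$ if $a+4b\ne0$ (and to $0$ if $a+4b=0$), and to $+\infty$ as $z\to+\infty$; and no root stays bounded as $z\to+\infty$ (else $\Delta_{a,b}$ would tend to $1\ne0$ along it). The hard part is exactly this near-$\emax$ analysis --- cleanly peeling off the logarithmically divergent factor $I_0(z)$ and controlling the remainder through the near-$\emax$ expansions of $\Delta_a^{(1)},\Delta_b^{(2)},\Delta^{(3)}$ from Proposition \ref{prop:kernel_asymptotics_max}, Corollary \ref{cor:integral_finite00} and the appendix.

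Now the three cases. In (a) and (b) we have $ab<0$, so $\mu'\mapsto\Delta_{a,b}(\mu';z)$ opens downward and, being $1$ at $0$, has a unique positive root $z\mapsto\mu_+(z)$, with $\Delta_{a,b}(\mu;z)>0$ iff $\mu<\mu_+(z)$. The continuous map $\mu_+$ is injective --- two coincident values would give two eigenvalues above $\emax$, contradicting Proposition \ref{prop:es_eigens_minmax}(b) --- hence strictly monotone; by the previous paragraph it maps $(\emax,+\infty)$ onto $(\ell,+\infty)$, where $\ell=\mu_\es^0$ in case (b) and $\ell=0=\mu_\es^0$ in case (a). So for $\mu>\mu_\es^0$ there is exactly one $z$ with $\mu_+(z)=\mu$, a unique simple zero, and for $0<\mu\le\mu_\es^0$ (case (b)) one has $\mu<\mu_+(z)$ for all $z>\emax$, so $\Delta_{a,b}(\mu;\cdot)>0$ on $(\emax,+\infty)$. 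In (c), $a,b>0$, the quadratic opens upward with minimum value $1-\frac{(aI_0+bI_2)^2}{4abD}=-\frac{(aI_0-bI_2)^2+4abI_1^2}{4abD}\le0$ at a positive argument, hence has two positive roots $\mu_-(z)\le\mu_+(z)$ for every $z>\emax$, and by the multiplicity identity the number of zeros of $\Delta_{a,b}(\mu;\cdot)$ in $(\emax,+\infty)$ equals $\#\mu_-^{-1}(\mu)+\#\mu_+^{-1}(\mu)$. For $\mu<\mu_\es^0$ the root asymptotics (and $abD(z)\to+\infty$) give $\Delta_{a,b}(\mu;z)\to-\infty$ as $z\searrow\emax$ and $\to1$ as $z\to+\infty$; the number of zeros is then odd and at most two, hence equal to one (simple), which also forces $\#\mu_+^{-1}(\mu)=0$, i.e.\ the image of $\mu_+$ lies in $[\mu_\es^0,+\infty)$. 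Since the image of $\mu_-$ is $(0,+\infty)$ and that of $\mu_+$ contains all of $(\mu_\es^0,+\infty)$ (its values approach $\mu_\es^0$ and $+\infty$), for $\mu>\mu_\es^0$ both preimages are nonempty, so the count is $\ge2$, hence $=2$. Finally, for $\mu=\mu_\es^0$: injectivity of $\mu_+$ (a repeated $\mu_+$-value, or a $\mu_+$-value $>\mu_\es^0$ taken twice together with the forced $\mu_-$-preimage, would produce $>2$ zeros) makes $\mu_+$ strictly increasing with image $(\mu_\es^0,+\infty)$, so $\#\mu_+^{-1}(\mu_\es^0)=0$, and $\mu_-$ --- injective on $\mu_-^{-1}((0,\mu_\es^0))$ by the count for $\mu<\mu_\es^0$ --- attains $\mu_\es^0$ exactly once, giving exactly one zero. (The needed monotonicity of the branches may alternatively be read off from Lemma \ref{lem:zeros_delta_es}.)

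A side remark clarifying the role of $\mu_\es^0$: at the borderline $\mu=\mu_\es^0$ the leading $I_0$-divergence in $\Delta_{a,b}(\mu_\es^0;z)$ cancels, and a short computation (substituting $\mu_\es^0=\frac{(a+4b)\gamma_\es}{ab}$ and $\gamma_\es^{-1}=4\Theta^*-\kappa_1$) yields $\lim_{z\searrow\emax}\Delta_{a,b}(\mu_\es^0;z)=-\big(2b\gamma_\es^{-1}-(a+4b)\Theta^*\big)^2\big/\big(ab(\gamma_\es^{-1})^2\big)$, which is $\le0$ in case (c) and vanishes exactly when $\Theta^*a=\Theta^{**}b$; this is the bridge from the present proposition to Theorems \ref{teo:depend_eigenv_II} and \ref{teo:thresholds}.
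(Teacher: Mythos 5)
Your argument is correct in substance, and it takes a genuinely different route from the paper. The paper fixes $\mu$ and studies the sign and $z\searrow\emax$ asymptotics of $z\mapsto\Delta_{a,b}(\mu;z)$ directly via the expansion \eqref{maindetdelta_4}, deducing the number of sign changes through the intermediate value theorem, the rank-two bound, the auxiliary zeros of $\Delta_a^{(1)}$ and $\Delta_b^{(2)}$ from Proposition~\ref{prop:existence_rank1} (for case (c), $\mu>\mu_\es^0$), and Kato--Rellich perturbation (for case (c), $\mu=\mu_\es^0$). You instead dualize: for each $z$ you analyze the explicit quadratic $\mu'\mapsto\Delta_{a,b}(\mu';z)=1-\mu'(aI_0+bI_2)+(\mu')^2abD$, track the branches $\mu_\pm(z)$ by Vieta's formulas and the endpoint asymptotics (the near-$\emax$ analysis of $I_0,I_1,I_2$ doing the work of \eqref{maindetdelta_4}), and convert the desired count into a preimage count for $\mu_\pm$. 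This buys you a uniform framework for (a)--(c), avoids the auxiliary Proposition~\ref{prop:existence_rank1}, and replaces Kato--Rellich by an elementary monotonicity/IVT argument for the branches. Two points are worth tightening. First, a small imprecision: in case (a) with $a+4b>0$ (and $ab<0$) the sum of the roots tends to $\frac{(a+4b)\gamma_\es}{ab}<0$, \emph{not} to $\mu_\es^0=0$; your conclusion $\mu_+\to0^+$ is still right, but one should deduce it from ``product $\to0$, sum $\to s\le0$, $\mu_+>0>\mu_-$'', not from ``sum $\to\mu_\es^0$''. Second, the argument that $\mu_-$ attains $\mu_\es^0$ exactly once in case (c) is stated a bit tersely; it does close, e.g.\ via the fact that $\#\mu_-^{-1}(\mu)=1$ for all $\mu\ne\mu_\es^0$ (which you establish), together with the isolated-zero property of the real-analytic function $z\mapsto\Delta_{a,b}(\mu_\es^0;z)$ and a short IVT contradiction; alternatively one can use the paper's Kato--Rellich step or its Lemma~\ref{lem:zeros_delta_es}, as you note. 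With these two points filled in, the proof is complete.
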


\begin{proof}
Applying Proposition \ref{prop:kernel_asymptotics_max} with $v\equiv 1,$ $v(q)=(\cos q_1+\cos q_2)^2$ and $v(q)=\cos q_1+\cos q_2,$ respectively, we find $\delta=\delta_\dispersion>0$ such that for any $z\in(\emax,\emax+\delta)$
\begin{align}
&\frac{1}{4\pi^2} \int_{\T^2}\frac{\d p}{z-\dispersion(p)}  = -J_0\,\ln \alpha+\ln \alpha\sum\limits_{n\geq 1} w_n^{(1)}\alpha^n+\sum\limits_{n\geq 0} w_n^{(2)}\alpha^n, \label{v_teng1}\\
&\frac{1}{4\pi^2} \int_{\T^2}\frac{(\cos p_1 +\cos p_2)^2\d p}{z-\dispersion(p)} =-4J_0\ln \alpha+\ln \alpha\sum\limits_{n\geq 1} \hat{w}_n^{(1)}\alpha^n+\sum\limits_{n\geq 0} \hat{w}_n^{(2)}\alpha^n,  \label{v_teng_cos_kvad}\\
& \frac{1}{4\pi^2} \int_{\T^2}\frac{(\cos p_1 +\cos p_2)\d p}{z-\dispersion(p)} = 2J_0\ln \alpha+\ln \alpha\sum\limits_{n\geq 1} \tilde{w}_n^{(1)}\alpha^n+\sum\limits_{n\geq 0} \tilde{w}_n^{(2)}\alpha^n, \label{v_teng_cos}
\end{align}
where $\alpha:=z-\emax$ and $\{w_n^{(1)}\},\{w_n^{(2)}\},\{\hat{w}_n^{(1)}\},\{\hat{w}_n^{(2)}\},\{\tilde{w}_n^{(1)}\},\{\tilde{w}_n^{(2)}\}$ are real numbers. Inserting these expansions in the definition of $\Delta_{a,b}$ we get
\begin{align}\label{maindetdelta_4}
\Delta_{a,b}(\mu;z)& = 1 + A_0 \mu + \tilde A_0 \mu^2+[(a+4b) - ab\tilde B_0 \mu]J_0\mu \ln \alpha \nonumber \\
& +\sum\limits_{n\geq 1}(A_n\mu+\tilde{A}_n\mu^2) \alpha^n +\sum\limits_{n\geq 1}(B_n\mu + \tilde{B}_n\mu^2) \alpha^n \ln \alpha+\sum\limits_{n\geq 1}\tilde C_n\mu^2 \alpha^n \ln^2 \alpha
\end{align}
where $A_n,\tilde{A},B_n,\tilde{B},\tilde C_n$ are  real numbers (depending only on $\dispersion.$ $a$ and $b$), $J_0$ is given by Remark \ref{rem:morse_lemma}, $\alpha :=z-\emax,$ and
\begin{subequations}
\begin{align}
A_0&:= -aw_0^{(2)} - b\hat{w}_0^{(2)}, \label{A0} \\
\tilde A_0&:= ab(w_0^{(2)}\hat{w}_0^{(2)}- [\tilde{w}_0^{(2)}]^2),\label{tildeA0} \\
\tilde B_0 &:=4w_0^{(2)}+\hat w_0^{(2)}+4\tilde w_0^{(2)}, \label{tildeB0}\\
\tilde{B}_1&:= ab\,( \hat{w}_1^{(1)} w_0^{(2)}+\hat{w}_0^{(2)} w_1^{(1)}-2\tilde{w}_1^{(1)} \tilde{w}_0^{(2)} )-ab\,J_0( 4w_1^{(2)}+\hat{w}_1^{(2)} +4\tilde{w}_1^{(2)}), \label{B1}\\
B_1&:= -a\,w_1^{(1)}-b\,\hat{w}_1^{(1)}. \label{tildeB1}\\
\tilde C_1 &:= -ab J_0 (4w_1^{(1)} + \hat w_1^{(1)} + 4\tilde w_1^{(1)} ) \label{tildeC1}
\end{align}
\end{subequations}
By \eqref{v_teng1}-\eqref{v_teng_cos}
$$
\begin{aligned}
\frac{1}{4\pi^2} \int_{\T^2}\frac{(2+\cos p_1 +\cos p_2)^2\d p}{z-\dispersion(p)} = &\ln \alpha\sum\limits_{n\geq 1}
(4w_n^{(1)} + \hat{w}_n^{(1)} + 4\tilde w_n^{(1)})
\alpha^n
+\sum\limits_{n\geq 0}(4w_n^{(2)}+ \hat{w}_n^{(2)} + 4\tilde w_n^{(2)})\alpha^n
\end{aligned}
$$
Then by the definition \eqref{def:mu_es} of $\gamma_\es$ and Corollary \ref{cor:integral_finite00} applied with $v(p)=(2+\cos p_1+\cos p_2)^2$ and using $v(\vec\pi)=0,$ $\nabla v(\vec\pi)=0$ and $\nabla^2v(\vec\pi)=0,$  we find
\begin{align*}
\tilde B_0 = 4w_0^{(2)}+\hat{w}_0^{(2)}+4\tilde{w}_0^{(2)} = \frac{1}{4\pi^2}\int_{\T^2}\frac{(2 + \cos q_1 + \cos q_2)^2\d q}{\emax - \dispersion(q)} =\frac1{\gamma_\es} ,
\end{align*}
$$
4w_1^{(1)} + \hat w_1^{(1)} + 4\tilde w_1^{(1)}  = 0
$$
and
\begin{equation}\label{mana_hosilasisis}
4w_1^{(2)} + \hat w_1^{(2)} + 4\tilde w_1^{(2)}  = -\frac{1}{4\pi^2} \int_{\T^2}\frac{(2+\cos p_1 +\cos p_2)^2\d p}{(\emax-\dispersion(p))^2}
\end{equation}
In particular, $\tilde B_0>0,$ $\tilde C_1=0,$ and the last sum in \eqref{maindetdelta_4} starts from $n=2.$
\smallskip

(a) Assume that $ab<0$ and $a+4b\ge0.$ Then for any $\mu>0$ the coefficient $[(a+4b) - ab\tilde B_0 \mu]J_0\mu $ staying in front of the leading term of the expansion \eqref{maindetdelta_4} is positive. In particular,
$$
\lim\limits_{z\searrow\emax} \Delta_{a,b}(\mu;z)  = -\infty.
$$
Since $\Delta_{a,b}(\mu;z)\to1$ as $z\to+\infty,$ by the continuity of $\Delta_{a,b}(\mu;\cdot),$ for any $\mu>0$ there exists at least one $E_\es(\mu)>\emax$ such that $\Delta_{a,b}(\mu;E_\es(\mu))=0.$ By Lemma \ref{lem:eigen_fred} $E_\es(\mu)$ is an eigenvalue of $H_{a,b}^\es(\mu).$ By Proposition \ref{prop:es_eigens_minmax} (b) $E_\es(\mu)$ is the unique eigenvalue of $H_{a,b}^\es(\mu)$ in $(\emax,+\infty),$ and therefore, again by Lemma \ref{lem:eigen_fred}  $E_\es(\mu)$ is the unique zero of $\Delta_{a,b}(\mu;\cdot)$ in $(\emax,+\infty).$
\smallskip

(b) Assume that $ab<0$ and $a+4b<0.$ Then
$
\mu_\es^0=\frac{a+4b}{\tilde B_0 ab}>0.
$
Hence, as in the proof of the assertion (a), $\Delta_{a,b}(\mu;\cdot)$ has a unique zero in $(\emax,+\infty)$ for any  $\mu\in(\mu_\es^0,+\infty).$ Let us prove that $\Delta_{a,b}(\mu;z)>0$ for any $\mu\in(0,\mu_\es^0]$ and $z>\emax.$

(b1) Assume that $\mu\in(0,\mu_\es^0).$
Since
$$
\lim\limits_{z\searrow\emax} \Delta_{a,b}(\mu;z)  = +\infty,\quad \mu\in(0,\mu_\es^0],
$$
the function $\Delta_{a,b}(\mu;\cdot)$ admits positive values near $z=\emax$ and $z=+\infty.$ Hence, if $\Delta_{a,b}(\mu;\cdot)$ has a zero, then by continuity it admits either at least two different zeros or at least one zero with multiplicity two. Then by Lemma \ref{lem:eigen_fred} $H_{a,b}^\es(\mu)$ should have at least two zeros which contradicts to Proposition \ref{prop:es_eigens_minmax} (b).

(b2) Assume that $\mu=\mu_\es^0.$ Letting $z\searrow\emax$ we get
$$
\lim\limits_{z\searrow \emax} \,\Delta_{a,b}(\mu_\es^0;z) = 1 + A_0\mu_\es^0 + \tilde A_0(\mu_\es^0)^2.
$$
Observe that
\begin{align}
1 + A_0\mu_\es^0 + \tilde A_0 (\mu_{\es}^0)^2  
= & -\frac{[(2w_0^{(2)}+\tilde{w}_0^{(2)})a-(4\tilde{w}_0^{(2)}+2\hat{w}_0^{(2)})b]^2}{ab\tilde B_0^2}. \label{constant00}
\end{align}
Applying Corollary \ref{cor:integral_finite00} with $v(p) = 2+\cos p_1+\cos p_2$ and $v(p)=2(\cos p_1 +\cos p_2)(2+\cos p_1+\cos p_2),$ and recalling the definitions \eqref{theta_mayor} and \eqref{theta_podpolkovnik} of $\Theta^*$ and $\Theta^{**}$ we establish
\begin{equation}\label{theta_star}
2w_0^{(2)} + \tilde{w}_0^{(2)} = \frac{1}{4\pi^2}\int_{\T^2} \frac{(2+\cos p_1+\cos p_2)\d p}{\emax - \dispersion(p)} = \Theta^*
\end{equation}
and
\begin{equation}\label{theta_starstar}
4\tilde{w}_0^{(2)} + 2\hat w_0^{(2)} = \frac{1}{4\pi^2}\int_{\T^2} \frac{2(\cos p_1 +\cos p_2)(2+\cos p_1+\cos p_2)\d p}{\emax - \dispersion(p)} = \Theta^{**}.
\end{equation}
Thus, recalling $\tilde B_0=1/\gamma_\es$ \eqref{constant00} reads as
\begin{equation}\label{ozod_had_hollll}
1 + A_0\mu_\es^0 + \tilde A_0 (\mu_{\es}^0)^2=
- \frac{\gamma_\es^2 [\Theta^*a - \Theta^{**}b]^2}{ab}.
\end{equation}
If $\Theta^*a \ne \Theta^{**}b,$ using $ab<0$ we get $1 + A_0\mu_\es^0 + \tilde A_0 (\mu_{\es}^0)^2>0.$ Hence, again $\Delta_{a,b}(\mu_\es^0;\cdot)$ admits positive values near $z=\emax$ and $z=+\infty.$ Hence, as in (a) $\Delta_{a,b}(\mu_\es^0;\cdot)$ has no zeros in $(\emax,+\infty).$ Finally, assume that $\Theta^*a= \Theta^{**}b.$ In this case, using \eqref{B1}, \eqref{tildeB1},   \eqref{theta_star}, \eqref{theta_starstar} and \eqref{mana_hosilasisis}
we get
\begin{align}
B_1\mu_{\es}^0+\tilde{B}_1(\mu_{\es}^0)^2 
%
=  \frac{(a+4b)(2\tilde{w}_1^{(1)}+\hat{w}_1^{(1)})[(2w_0^{(2)}+\tilde{w}_0^{(2)})a-(4\tilde{w}_0^{(2)}+2\hat{w}_0^{(2)})b]}{ab\,\tilde B_0^2}\nonumber \\
- ab\mu_{es}^2J_0 (4w_1^{(2)}+4\tilde{w}_1^{(2)}+\hat{w}_1^{(2)}) = \frac{J_0 ab (\mu_{\es}^0)^2}{4\pi^2} \int_{\T^2}\frac{(2+\cos p_1 +\cos p_2)^2\d p}{(\emax-\dispersion(p))^2}. \label{pos_consta8191}
\end{align}
Since $ab<0,$ $B_1\mu_\es^0 + \tilde B_1(\mu_\es^0)^2<0$ and hence, by \eqref{maindetdelta_4} $\frac{\Delta_{a,b}(\mu_\es^0;z)}{(z-\emax)\ln(z-\emax)}<0$ for sufficiently small $z-\emax>0.$ Thus, again $\Delta_{a,b}(\mu_\es^0;\cdot)$ is positive near $z=\emax$ and $z=+\infty$ as so  it has no zeros in $(\emax,+\infty).$
\smallskip

(c) Assume that $a,b>0.$ Then
$
\mu_\es^0=\frac{a+4b}{\tilde B_0 ab}>0.
$
Since $H_{a,b}^\es(\mu)$ has at most two zeros in $(\emax,+\infty) $ (see  Proposition \ref{prop:es_eigens_minmax} (a)), by Lemma \ref{lem:eigen_fred} $\Delta_{a,b}(\mu;\cdot)$ has at most two zeros greater than $\emax.$

(c1) Assume that $\mu\in(0,\mu_\es^0).$ Since
\begin{equation}\label{asymp_deleeee}
\lim\limits_{z\searrow\emax} \Delta_{a,b}(\mu;z)  = -\infty\quad \text{and} \quad \lim\limits_{z\to+\infty} \Delta_{a,b}(\mu;z)  = 1,
\end{equation}
by continuity, $\Delta_{a,b}(\mu;\cdot)$ has at least one zero $z_1(\mu)$ in $(\emax,+\infty).$ Assume that $\Delta_{a,b}(\mu;\cdot)$ has another zero $z_2(\mu)$ in $(\emax,+\infty).$ Then \eqref{asymp_deleeee} and easy continuity arguments show that $\Delta_{a,b}(\mu;\cdot)$ should have at least three zeros (counted with multiplicities) in $(\emax,+\infty)$ (see Figure \ref{fig:location_of_zeros} (a)) so that by  Lemma \ref{lem:eigen_fred} $H_{a,b}^\es(\mu)$ has three eigenvalues greater than $\emax.$ However, this contradicts to Proposition \ref{prop:es_eigens_minmax} (a). Thus, $\Delta_{a,b}(\mu;\cdot)$ has a unique zero.

(c2) Assume that $\mu=\mu_\es^0.$ As in the proof of (b2) we can prove that $\Delta_{a,b}(\mu_\es^0;\cdot )$ has at least one zero in $(\emax,+\infty).$ By contradiction assume that $\Delta_{a,b}(\mu_\es^0;\cdot)$ has two zeros $z_1\ge z_2>\emax.$ By Lemma \ref{lem:eigen_fred} $z_1$ and $z_2$ are two eigenvalues of  $H_{a,b}^\es(\mu_\es^0).$
Since $z_1,z_2>\emax$ and the operator-valued map $\mu\mapsto H_{a,b}^\es(\mu)$  is analytic, by the Kato-Rellich perturbation theory \cite[Chapter XII]{RS:vol.IV} there exists $\epsilon>0$ such that for any $\mu\in(\mu_\es^0-\epsilon,\mu_\es^0+\epsilon)$ the operator $H_{a,b}^\es(\mu)$ has two eigenvalues. However, this contradicts to assertion (a) since $H_{a,b}^\es(\mu)$ has a unique eigenvalue for $\mu\in(\mu_\es^0-\epsilon,\mu_\es^0).$ Thus, $\Delta_{a,b}(\mu_\es^0;\cdot)$ has a unique zero.

(c3) Assume that $\mu>\mu_\es^0.$ In this case
\begin{equation}\label{asymp_dolooo}
\lim\limits_{z\searrow\emax} \Delta_{a,b}(\mu;z)  = +\infty\quad \text{and} \quad \lim\limits_{z\to+\infty} \Delta_{a,b}(\mu;z)  = 1.
\end{equation}
By Proposition \ref{prop:existence_rank1} applied with $v\equiv \sqrt{\frac{a}{2\pi}}$ and $v(p)=\sqrt{\frac{b}{2\pi}}(\cos p_1 + \cos p_2)$ for any $\mu>0$ the functions $\Delta_a^{(1)}(\mu;\cdot)$ and $\Delta_b^{(2)}(\mu;\cdot)$ have unique zeros $z_1$ and $z_2,$ respectively. Then either
$$
\text{either}\quad \Delta^{(3)}(z_1) = \Delta^{(3)}(z_2)=0
\quad\text{or}\quad 
\Delta^{(3)}(z_1)^2 + \Delta^{(3)}(z_2)^2>0.
$$
\begin{figure}[h]
\begin{minipage}{0.45\textwidth}
\centering
\includegraphics[width=\textwidth]{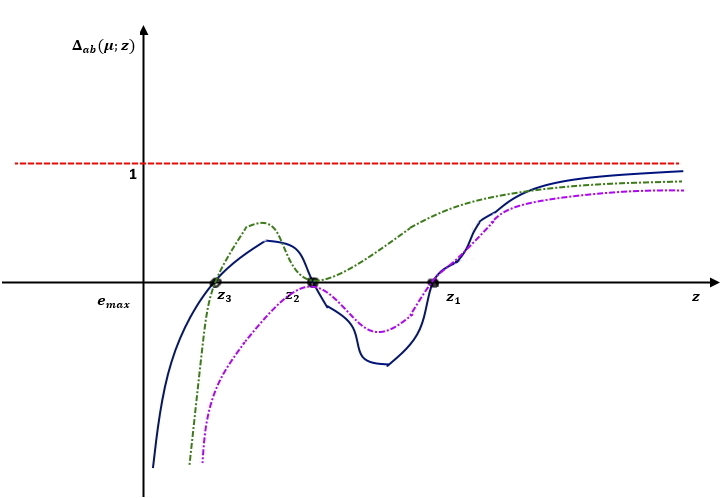}
\subcaption{(a)}
\end{minipage}
\quad
\begin{minipage}{0.45\textwidth}
\centering
\includegraphics[width=\textwidth]{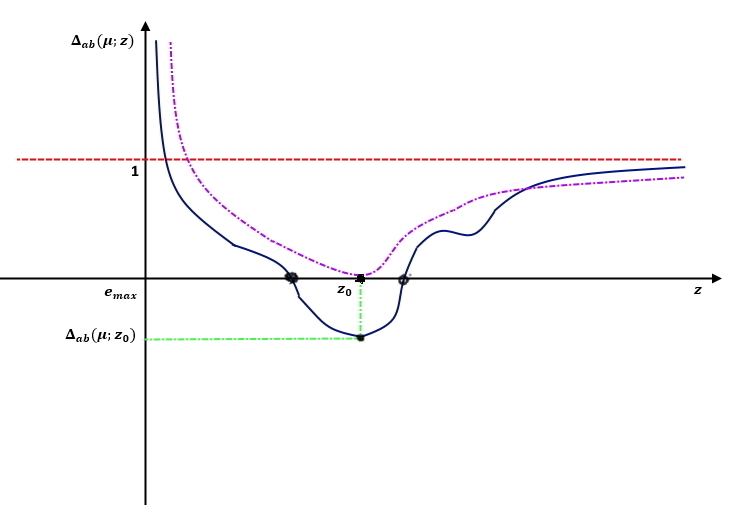}
\subcaption{(b)}
\end{minipage}
\caption{\small A schematic location of  zeros of $\Delta_{a,b}(\mu;\cdot)$ for fixed $a,b>0$ and $\mu>\mu_\es^0.$} \label{fig:location_of_zeros}
\end{figure}

In the first case both $z_1$ and $z_2$ are zeros of $\Delta_{a,b}(\mu;\cdot)$. In the second case, $\Delta_{a,b}(\mu;\cdot)$ admits negative values either at $z_0=z_1$ or $z_0=z_2.$ Now the asymptotics \eqref{asymp_dolooo} at $z=\emax$ and at $z=-\infty$ as well as the continuity of $\Delta_{a,b}(\mu;\cdot)$ imply that $\Delta_{a,b}(\cdot)$ has a zero in $(\emax,z_0)$ and in $(z_0,+\infty),$ respectively (see Figure \ref{fig:location_of_zeros} (b)). Hence, $\Delta_{a,b}(\mu;\cdot)$ has two zeros in $(\emax,+\infty).$
\end{proof}

\begin{proposition}\label{prop:analytic_eigen}
(a) Assume that $ab<0$ and for $\mu>\mu_\es^0$ let $E_\es(\mu)$ be the unique zero of $\Delta_{a,b}(\mu;\cdot)$. Then $E_\es(\cdot)$ is real-analytic in $(\mu_\es^0,+\infty).$

(b) Assume that $a,b>0$ and for $\mu\in(0,\mu_\es^0]$ let $E_\es^{(1)}(\mu)$ and for $\mu>\mu_\es^0$ let $E_\es^{(1)}(\mu)$ and $E_\es^{(2)}(\mu)$ be the zeros of $\Delta_{a,b}(\mu;\cdot).$ Then one can arrange $E_\es^{(1)}(\mu)$ and $E_\es^{(2)}(\mu)$ in such a way that $E_\es^{(1)}(\mu)$ is analytic at any $\mu>0$ and $E_\es^{(1)}(\mu)$ is analytic at any $\mu>\mu_\es^0.$
\end{proposition}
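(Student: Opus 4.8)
The plan is to run the Kato--Rellich analytic perturbation theory \cite[Chapter XII]{RS:vol.IV}. Both branches are defined as zeros of the Fredholm determinant $\Delta_{a,b}(\mu;\cdot)$, equivalently as eigenvalues of the analytic family of self-adjoint operators $\mu\mapsto H_{a,b}^\es(\mu)=H_0+\mu V_{a,b}^\es$. By Lemma \ref{lem:eigen_fred}, Proposition \ref{prop:es_eigens_minmax} and Proposition \ref{prop:zeros_delta_es}, for each $\mu$ in the relevant range the eigenvalues of $H_{a,b}^\es(\mu)$ in $(\emax,+\infty)$ form a finite cluster of total multiplicity $m(\mu)\le 2$ separated by a gap from the essential spectrum $[\emin,\emax]$; hence Rellich's theorem supplies, near each $\mu_0$, real-analytic functions $\lambda_1,\dots,\lambda_{m(\mu_0)}$ whose values, counted with multiplicity, are exactly the eigenvalues of $H_{a,b}^\es(\mu)$ above $\emax$ for $\mu$ near $\mu_0$. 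The proof then reduces to gluing these local branches consistently.

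For part (a), if $ab<0$ then $m(\mu)=1$ for every $\mu>\mu_\es^0$ by Proposition \ref{prop:es_eigens_minmax}(b) and Proposition \ref{prop:zeros_delta_es}(a)--(b); so near any such $\mu_0$ the single analytic Rellich branch is forced to be $E_\es(\cdot)$, and since $\mu_0$ is arbitrary $E_\es$ is real-analytic on $(\mu_\es^0,+\infty)$.

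For part (b), let $a,b>0$. Proposition \ref{prop:zeros_delta_es}(c) (and its proof, which shows via Kato--Rellich that the unique zero at $\mu=\mu_\es^0$ is simple) gives $m(\mu)=1$ on $(0,\mu_\es^0]$ and $m(\mu)=2$ on $(\mu_\es^0,+\infty)$; in particular the cluster never empties and never drops below two elements on $(\mu_\es^0,+\infty)$, so no eigenvalue is absorbed into $[\emin,\emax]$ at any finite $\mu>0$. Starting at small $\mu$ with the unique eigenvalue and continuing it by Rellich --- across $\mu_\es^0$ the branch passes analytically (there $m=1$) and for $\mu>\mu_\es^0$ it continues as the eigenvalue that stays bounded away from $\emax$, not as the one emerging from $\emax$ --- and using the no-absorption fact, one obtains a function $E_\es^{(1)}$ real-analytic on all of $(0,+\infty)$. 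For $\mu>\mu_\es^0$, define $E_\es^{(2)}(\mu)$ to be the remaining element of the two-point cluster. Near any $\mu_0>\mu_\es^0$, Rellich produces analytic $\lambda_1,\lambda_2$ exhausting the cluster; since $E_\es^{(1)}$ is analytic and equals one cluster element, it coincides identically with one of $\lambda_1,\lambda_2$ (because $\lambda_1-\lambda_2$ has only isolated zeros unless $\lambda_1\equiv\lambda_2$), whence $E_\es^{(2)}$ coincides with the other and is real-analytic on $(\mu_\es^0,+\infty)$. By Lemma \ref{lem:zeros_delta_es} these branches are strictly increasing, hence are the ones in Theorem \ref{teo:existence_eigenv}(b).

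The main obstacle --- and the reason that simply ordering the eigenvalues by size does not work --- is that the two even-symmetric branches can genuinely cross, i.e. $\Delta_{a,b}(\mu;\cdot)$ can have a zero of multiplicity two (Proposition \ref{prop:es_zero_high_multos} and Section \ref{subsec:multiple_eigen}), so the ``largest eigenvalue'' acquires a corner there; Rellich's theorem is precisely the device that yields analytic branches through such crossings, self-adjointness excluding Puiseux-type behaviour. A more self-contained alternative, within the determinant formalism of this section, is to parametrize the zero set of $\Delta_{a,b}$ by $z$: writing $\Delta_{a,b}(\mu;z)=1+P(z)\mu+Q(z)\mu^2$ with $P(z)=-aI_1(z)-bI_2(z)$ and $Q(z)=ab\big(I_1(z)I_2(z)-I_3(z)^2\big)$, where $I_1,I_2,I_3$ are the three integrals of Lemma \ref{lem:eigen_fred}, one checks for $a,b>0$ that $Q(z)>0$ on $(\emax,+\infty)$ (Cauchy--Schwarz, since $1$ and $\cos q_1+\cos q_2$ are linearly independent) and that $P(z)^2-4Q(z)=(aI_1(z)-bI_2(z))^2+4ab\,I_3(z)^2\ge 0$ there, so the two $\mu$-roots
$$
\mu_\pm(z)=\frac{-P(z)\pm\sqrt{P(z)^2-4Q(z)}}{2Q(z)}
$$
are real and, since a nonnegative real-analytic function has only even-order zeros, can be taken real-analytic on $(\emax,+\infty)$; after identifying $\mu_-$ (resp. $\mu_+$) as the branch tending to $0$ (resp. $\mu_\es^0$) at $\emax$ and verifying $\mu_\pm'\ne 0$ --- using Lemma \ref{lem:zeros_delta_es} together with the eigenvalue count in Proposition \ref{prop:zeros_delta_es}(c) to rule out vertical tangents of the zero set --- one sets $E_\es^{(1)}:=\mu_-^{-1}$ on $(0,+\infty)$ and $E_\es^{(2)}:=\mu_+^{-1}$ on $(\mu_\es^0,+\infty)$. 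This last step, $\mu_\pm'\ne 0$, is the one requiring the most care, and it is exactly what the Rellich argument renders unnecessary.
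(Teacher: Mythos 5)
Your argument is correct and, for part~(b), is the same Kato--Rellich continuation that the paper invokes (the paper simply cites \cite[Proposition~2.1]{KS:1980} and Lemma~\ref{lem:eigen_fred} and leaves the gluing of local Rellich branches to the reader, which is exactly what you spell out). The small difference is in part~(a): the paper appeals to the Implicit Function Theorem applied to $\Delta_{a,b}(\mu;z)=0$ --- valid because for $ab<0$ the unique zero is simple by Proposition~\ref{prop:es_eigens_minmax}(b) together with Lemma~\ref{lem:eigen_fred}, so $\partial_z\Delta_{a,b}\neq 0$ there --- whereas you derive it as the $m(\mu)=1$ special case of the Rellich argument. Both are sound; the IFT route is slightly more elementary since it stays entirely inside the determinant formalism without invoking operator perturbation theory. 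Your concluding sketch of an explicit parametrization of the zero set by $z$ (solving the quadratic in $\mu$ and showing the discriminant $(aI_1-bI_2)^2+4ab\,I_3^2\ge 0$) is a genuine alternative not in the paper and gives a nice determinant-only route, though, as you correctly flag, it requires an extra verification ($\mu_\pm'\neq 0$, i.e.\ no vertical tangents of the zero set) that the Rellich argument bypasses; that missing step can in fact be supplied by Lemma~\ref{lem:zeros_delta_es}, which shows any $C^1$ parametrization of the zero set by $\mu$ has $E'>0$, hence $\mu_\pm'=1/E'>0$ wherever defined.
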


\begin{proof}
The assertion (a) follows from the Implicit Function Theorem.
Since the zeros of $\Delta_{a,b}(\mu;\cdot)$ coincide with the eigenvalues of $H_{a,b}^\es(\mu)$ (by Lemma \ref{lem:eigen_fred}),
the assertion (b) follows from the  Kato-Rellich theory (see e.g. \cite[Proposition 2.1]{KS:1980}).
\end{proof}

Note that due to higher multiplicity of zeros, the  analyticity does not allow us state in general $E_\es^{(1)}(\mu)\ge E_\es^{(2)}(\mu)$ for all $\mu>\mu_\es^0.$
Since the functions $z=E_\es(\mu),$ $z=E_\es^{(1)}(\mu)$ and $z=E_\es^{(2)}(\mu)$ solve the implicit equation $\Delta_{a,b}(\mu;z)\equiv0$ from Lemma \ref{lem:zeros_delta_es} and Proposition \ref{prop:zeros_delta_es} we get

\begin{corollary}\label{cor:increasing_eigen}
The functions $E_\es(\cdot),E_\es^{(2)}(\cdot)$ and $E_\es^{(1)}(\cdot)$ strictly increase in $(\mu_\es^0,+\infty)$ and   $(0,+\infty),$ respectively.
\end{corollary}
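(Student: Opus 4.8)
The plan is to combine the real-analyticity of the branches $E_\es$, $E_\es^{(1)}$, $E_\es^{(2)}$ established in Proposition \ref{prop:analytic_eigen} with the local monotonicity statement of Lemma \ref{lem:zeros_delta_es}. The key observation is that, by construction, each of these three functions is a zero of $\Delta_{a,b}(\mu;\cdot)$ lying strictly above $\emax$, hence it solves the implicit identity $\Delta_{a,b}(\mu;E(\mu))\equiv0$ on the relevant parameter interval; and each is $C^1$ there because it is in fact real-analytic.

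First I would fix an arbitrary $\mu_0$ in the relevant open interval, namely $\mu_0>\mu_\es^0$ for $E\in\{E_\es,E_\es^{(2)}\}$ and $\mu_0>0$ for $E=E_\es^{(1)}$. Since these intervals are open, I can choose $\epsilon>0$ small enough that $[\mu_0-\epsilon,\mu_0+\epsilon]$ is contained in the interval. On this closed subinterval the branch $E$ belongs to $C^1([\mu_0-\epsilon,\mu_0+\epsilon])$ by Proposition \ref{prop:analytic_eigen} and satisfies $\Delta_{a,b}(\mu;E(\mu))\equiv0$ on $(\mu_0-\epsilon,\mu_0+\epsilon)$ by Proposition \ref{prop:zeros_delta_es} together with Lemma \ref{lem:eigen_fred}. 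Thus the hypotheses of Lemma \ref{lem:zeros_delta_es} are met, and that lemma yields $E'(\mu_0)>0$. As $\mu_0$ was arbitrary, $E'>0$ throughout the corresponding open interval, and strict monotonicity follows immediately.

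There is essentially no obstacle at this stage: the analytic heavy lifting has already been carried out, in Lemma \ref{lem:zeros_delta_es} (which handles, via a limiting argument, the locally finite set of parameters at which $\Delta^{(3)}(E(\mu))=0$) and in Proposition \ref{prop:analytic_eigen}. The only minor points to be careful about are that Lemma \ref{lem:zeros_delta_es} is phrased with $C^1$ regularity on a \emph{closed} interval, which is why one first passes to a compact subinterval before invoking it, and that for $E_\es^{(1)}$ the interval $(0,+\infty)$ is half-open, but since $0$ is excluded every point still has a compact neighbourhood lying inside the interval, so the argument goes through unchanged.
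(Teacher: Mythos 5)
Your proposal is correct and matches the paper's own argument: the paper also obtains the corollary directly by combining Lemma \ref{lem:zeros_delta_es} (positivity of the derivative for $C^1$ zero-branches of $\Delta_{a,b}$) with the analyticity from Proposition \ref{prop:analytic_eigen} and the identity $\Delta_{a,b}(\mu;E(\mu))\equiv0$ from Proposition \ref{prop:zeros_delta_es} and Lemma \ref{lem:eigen_fred}. The extra care you take about restricting to a compact subinterval is a fine point the paper leaves implicit, but it does not change the substance.
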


Now we are ready to prove Theorem \ref{teo:existence_eigenv} for $\omega=\es.$

\begin{proof}[Proof of Theorem \ref{teo:existence_eigenv} in $L^{2,\es}(\T^2)$]

The existence part of the theorem follows from Proposition \ref{prop:zeros_delta_es} while the real-analyticity and strict monotonicity of eigenvalues follow from Proposition \ref{prop:analytic_eigen} and Corollary \ref{cor:increasing_eigen}. Finally, the  convexity of $E_\es(\mu)$ follows from the obvious equality
$$
E_\es(\mu) = \sup\limits_{f\in L^{2,\es}(\T^2),\,\|f\|=1}\, (H_{a,b}^\es(\mu) f,f).
$$
\end{proof}

Now we establish  the expansions \eqref{esX000}, \eqref{esY000} and \eqref{esY001} of $E_\es(\mu),$ $E_\es^{(1)}$ and $E_\es^{(2)}$ at the corresponding thresholds.

\begin{proof}[Proof of Theorem \ref{teo:depend_eigenv_II}]

(a) Assume that either $\frac{a+4b}{ab}\le0$ or $a,b>0$  and let $X:=X_{a,b}$ be given by \eqref{def:Xab09}. By definition and Proposition \ref{prop:zeros_delta_es} $\Delta_{a,b}(\mu;X(\mu))\equiv0$ for all $\mu>0.$ Since  $X(\mu)\searrow\emax$ as $\mu\searrow 0,$ we can apply the expansion \eqref{maindetdelta_4}  of $\Delta_{a,b}(\mu;\cdot)$ with $z=X(\mu)$
\begin{align}\label{def_falpha00}
F(\mu;\alpha)& := \Delta_{a,b}(\mu;X) = 1 + A_0 \mu + \tilde A_0 \mu^2+[(a+4b) - ab\tilde B_0 \mu]J_0\mu \ln \alpha \nonumber \\
& +\sum\limits_{n\geq 1}(A_n\mu+\tilde{A}_n\mu^2) \alpha^n +\sum\limits_{n\geq 1}(B_n\mu + \tilde{B}_n\mu^2) \alpha^n \ln \alpha+\sum\limits_{n\geq 2}\tilde C_n\mu^2 \alpha^n \ln^2 \alpha
\end{align}
for all sufficiently small $\mu>0,$ where $\alpha:=X-\emax$ and we took into account $\tilde C_1=0.$  Thus, we need to solve the equation
\begin{equation}\label{implicit_eq}
F(\mu;\alpha)=0
\end{equation}
in $\alpha$ for all $\mu>0$ small enough.
\smallskip

(a1) Assume that $a+4b\ne 0$ and let
$$
\tau:=\tfrac{1}{\mu}e^{-\frac{1}{J_0(a+4b)\mu}}.
$$
Note that $\tau>0$ is small if $\mu>0$ is small. Replacing the variable $\alpha$ in \eqref{implicit_eq} with
\begin{equation}\label{repl_alpha100}
\alpha = \mu \tau (c + u),\quad c:=e^{-\frac{ab\tilde B_0 + (a+4b)A_0}{J_0(a+4b)^2}}
\end{equation}
from \eqref{implicit_eq} we obtain
$$
G(u,\mu,\tau) = 0,
$$
where
\begin{align*}
G(u,\mu,\tau) = &\frac{ab\tilde B_0 + (a+4b)A_0}{a+4b}+J_0(a+4b) \ln (u+c) + \tilde{A}_0\mu -ab\tilde B_0 J_0\mu\ln(u+c)
\nonumber \\
& +\sum\limits_{n\geq 1} (A_n+\tilde{A}_n\mu)\mu^n\tau^n (u+c)^n \\
& +\sum\limits_{n\geq 1}(B_n + \tilde{B}_n\mu) \mu^{n-1}\tau^n (u+c)^n \Big[\mu\,\ln (u+c)-\tfrac{1}{J_0(a+4b)}\Big]\\
&+\sum\limits_{n\geq 2}\tilde C_n \mu^{n-1} \tau^n (u+c)^n \Big[\mu\,\ln (u+c)-\tfrac{1}{J_0(a+4b)}\Big]^2.
\end{align*}
The function $G(u,\mu,\tau)$  is real-analytic for small  $|u|,|\mu|,|\tau|$ and satisfies $G(0,0,0)=0$ and $\frac{\p G}{\p u}(0,0,0)\neq 0.$
Thus by the Implicit Function Theorem in the analytical case for sufficiently small $|\mu|,|\tau|$ there exists a unique $u=u(\mu,\tau)$ solving $G(u,\mu,\tau)\equiv0$ and given by the absolutely convergent series
\begin{equation}\label{analitik_u_2hol}
u=\sum\limits_{n,m\geq0}c_{nm} \mu^n\tau^m
\end{equation}
where $\{c_{nm}\}$ are real coefficients.  Since $u(0,0)=0,$ $c_{00}=0.$ Inserting the representation \eqref{analitik_u_2hol} of $u$ and \eqref{A0} of $A_0$ in the expression of $\alpha$ in \eqref{repl_alpha100}  we get \eqref{esX000} in case $a+4b>0.$

(a2) Assume that $a+4b=0$ and let
$$
\tau:=\frac{1}{\mu^3} e^{\frac{1+A_0 \mu}{J_0ab \tilde B_0 \mu^2}}.
$$
Since $ab=-4b^2<0,$ $\tau>0$ is small if $\mu>0$ is small.
Replacing the variable $\alpha$ in \eqref{implicit_eq} with
\begin{equation*} 
\alpha = \mu^3 \tau (c + u),\quad c:=e^{ \frac{\tilde A_0}{ab \tilde B_0 J_0}}
\end{equation*}
and using the representation \eqref{A0} of $A_0$ in the expression of $\alpha,$ as in (a1) we get \eqref{esX000}. 
Since $a=-4b$ from \eqref{A0} and Corollary \ref{cor:integral_finite00} applied with $v(p)=4 - (\cos p_1 +\cos _2)^2$ we get
$$
A_0 = 4b w_0^{(2)} - b\hat w_0^{(2)} = b (4 w_0^{(2)} - \hat w_0^{(2)})
=\frac{b}{4\pi}\int_{\T^2} \frac{(4 - (\cos p_1 +\cos _2)^2)\d p}{\emax - \dispersion(p)}
$$
and hence, by \eqref{kappa1} $A_0 = b\kappa_1.$
Also by defintion \eqref{tildeA0} of $\tilde A_0$ we get
$
c= e^{\frac{w_0\hat w_0 - \tilde w_0^2}{J_0 \tilde B_0}},
$
and hence, $c$ depends only on $\dispersion.$

(b) Assume that $\frac{a+4b}{ab}>0$ and let $Y:=Y_{a,b}$ be defined as \eqref{def:Yab09}. Since $\Delta_{a,b}(\mu;Y(\mu))=0$ for all $\mu>\mu_\es^0$ and $Y(\mu)\searrow\emax$ as $\mu\searrow \mu_\es^0,$ we can still write \eqref{implicit_eq} with $\alpha=Y-\emax$ provided that $\mu-\mu_\es^0>0$ is small enough.
Since $\mu_\es^0 = \frac{a+4b}{ab\tilde B_0},$ expanding $F(\cdot;\alpha)$ to power series in $\mu-\mu_\es^0$ we represent \eqref{def_falpha00} as
\begin{align*}
F(\mu;\alpha)& := 1 + A_0 \mu_\es^0 + \tilde A_0 (\mu_\es^0)^2- J_0(a+4b)(\mu - \mu_\es^0) \ln \alpha \nonumber \\
& + (A_0+2\tilde{A}_0\mu_\es^0)(\mu-\mu_\es^0)+\tilde{A}_0(\mu-\mu_\es^0)^2-ab\tilde{B}_0J_0(\mu-\mu_\es^0)^2\ln \alpha  \nonumber\\
& + \sum\limits_{n\geq 1} (A_n\mu_\es^0+\tilde{A}_n (\mu_\es^0)^2) \alpha^n+\sum\limits_{n\geq 1} (B_n\mu_\es^0+\tilde{B}_n (\mu_\es^0)^2) \alpha^n\ln \alpha \nonumber \\
& + \sum\limits_{m\ge0,n\geq 1}\hat B_{mn}(\mu-\mu_\es^0)^m \alpha^n \ln \alpha 
+ \sum\limits_{m\ge0,n\geq 2}\hat C_{mn}(\mu-\mu_\es^0)^m \alpha^n \ln^2 \alpha
\end{align*}
Recalling \eqref{ozod_had_hollll}, by the definition of $\mu_\es^0$
\begin{equation}\label{der_ozod_had}
- \frac{1 + A_0\mu_\es^0 + \tilde A_0 (\mu_{\es}^0)^2}{J_0(a+4b)} =
\frac{\gamma_\es^2 [\Theta^*a - \Theta^{**}b]^2}{J_0(a+4b)ab} =:\Lambda\ge 0.
\end{equation}

(b1) Assume that $\Theta^*a \ne  \Theta^{**}b.$ Then $\Lambda>0$ and we let
$$
\lambda=\mu-\mu_\es^0\quad \text{and}\quad \tau = \frac{1}{(\mu-\mu_\es^0)^2} e^{-\frac{\Lambda}{\mu - \mu_\es^0} }.
$$
Obviously, $\tau>0$ is small if $\mu-\mu_\es^0>0$ is small. Let
\begin{equation}\label{hahaha8171}
\alpha=\lambda^2\tau (c+u),\quad c= e^{\frac{A_0+2\tilde{A}_0\mu_\es^0+ab\tilde{B}_0 J_0 \Lambda}{J_0(a+4b)}}.
\end{equation}
By the definitions \eqref{A0}, \eqref{tildeA0} and \eqref{der_ozod_had} of $A_0,$ $\tilde A_0$ and $\Lambda$ as well as equalities $\tilde B_0=1/\gamma_\es$ and $\mu_\es^0=\frac{(a+4b)\gamma_\es}{ab}$
\begin{equation}\label{c_es_hpppqqq}
\frac{A_0+2\tilde{A}_0\mu_\es^0+ab\tilde{B}_0 J_0 \Lambda}{J_0(a+4b)} = \frac{2\gamma_\es (w_0^{(2)}\hat{w}_0^{(2)} - (\tilde{w}_0^{(2)})^2)}{J_0} - \frac{aw_0^{(2)} + b\hat{w}_0^{(2)}}{J_0(a+4b)}
+\frac{\gamma_\es [\Theta^*a - \Theta^{**}b]^2}{J_0(a+4b)^2}.
\end{equation}
Now \eqref{esY000} can be obtained as in (a1), using \eqref{hahaha8171} and \eqref{c_es_hpppqqq} in \eqref{implicit_eq}.
\smallskip

(b2) Assume that $\Theta^*a =  \Theta^{**}b.$ Then by \eqref{ozod_had_hollll}  $1 + A_0\mu_\es^0 + \tilde A_0 (\mu_{\es}^0)^2=0$ and by \eqref{pos_consta8191} $B_1\mu_{\es}^0+\tilde{B}_1(\mu_{\es}^0)^2\ne0.$
Let
$$
\lambda:=\mu-\mu_\es^0,
\quad
\tau:=-\frac{1}{\ln \lambda},
\quad
\theta=-\lambda\ln \lambda
$$
Obviously, $\tau,\theta>0$ are small if $\lambda=\mu-\mu_\es^0>0$ is small. Now using 
\begin{equation*} 
\alpha:=\lambda (u+c),
\end{equation*}
where
$$
c:=\frac{J_0(a+4b)}{B_1\mu_{\es}^0+\tilde{B}_1(\mu_{\es}^0)^2}
=\frac{4\pi^2ab}{(a+4b)\gamma_{es}^2} \Big(\int_{\T^2}\frac{(2+\cos p_1 +\cos p_2)^2\d p}{(\emax-\dispersion(p))^2}\Big)^{-1},
$$
in \eqref{implicit_eq} and repeating similar arguments as in (a1) 
we get \eqref{esY001}.
\end{proof}

Finally we study the threshold eigenvalues and threshold resonances of $H_{a,b}^\es(\mu).$

\begin{proof}[Proof of Theorem \ref{teo:thresholds} for $\omega=\es$]
Let us consider the eigenvalue equation \eqref{eigen_equation} for $\omega=\es$ in $L^{1,\es}(\T^2)$.  Since we are interested in positive threshold $\mu_\es^0>0$ we have  $\frac{a+4b}{ab}>0.$ By the definition of $H_{a,b}^\es(\mu),$ any $L^{1}$-solution $f$ of
 \eqref{eigen_equation}
must be of the form
\begin{equation}\label{equ_reso}
f(p) = C\,\frac{2+\cos p_1+\cos p_2}{\emax - \dispersion(p)}
\end{equation}
(for some constant $C\ne0$). Then by Corollary \ref{cor:integral_finite00} $f\in L^{2,\es}(\T^2).$ Let us show that this necessarily implies $\Theta^*a = \Theta^{**}b.$ Indeed, by the definitions \eqref{theta_mayor} and \eqref{theta_podpolkovnik}  we can represent \eqref{eigen_equation} as
$$
f(p) = \frac{\Theta^*a + \frac12\,\Theta^{**}b (\cos p_1+\cos p_2)}{\emax - \dispersion(p)}.
$$
Comparing this with \eqref{equ_reso} we get $\Theta^*a = \Theta^{**}b.$
\end{proof}

\section{Some examples}\label{sec:examples}

\subsection{Discrete Laplacian}\label{subsec:discrete_laplace}
Let  $\hat H_0= -\hat\Delta$ so that
$$
\dispersion(q)=2-\cos q_1-\cos q_2.
$$
Using the results in \cite[Appendix]{LKhKh:2021} we can compute all constants in \eqref{def:mu_os}-\eqref{def:mu_es}:
$$
\gamma_{\os}=\gamma_{\oa}=\frac{\pi}{2\pi -4},\quad \gamma_{\ea}=\frac{\pi}{8-2\pi},
\quad \gamma_{\es}=\frac12 =0.5.
$$
Hence, the set of possible coupling constants thresholds is $\{0,\frac{\pi}{(2\pi - 4)b},\frac{\pi}{(2-\pi)b}, \frac{a+4b}{2ab}\}.$ Moreover the constants in  \eqref{theta_mayor}-\eqref{kappa1} are also computed:
$$
\Theta^*=1,\quad \Theta^{**}=0,\quad \kappa_0=\kappa_1=2.
$$
In particular, $\Theta^*a\ne\Theta^{**}b,$ and thus, the eigenvalues of $H_{a,b}(\mu)\big|_{L^{2,\es}(\T^2)}$ releases neither from threshold resonance nor from threshold eigenvalue.
Since
$$
\int_{\T^2}\frac{(\cos q_1 + \cos q_2)\d q}{z- (2-\cos q_1 - \cos q_2)}>0
$$
(see e.g., \cite[page 9]{HMK:2020}), all zeros of the Fredholm determinant $\Delta^{\es}(\mu;\cdot)$ are simple, and hence for the functions  $E_\es^{(1)}(\cdot)$ and $E_\es^{(2)}(\cdot)$ given by Theorem \ref{teo:existence_eigenv} (b) we have $E_\es^{(1)}(\cdot)>E_\es^{(2)}(\cdot)$ for any $\mu>\mu_\es.$

\subsection{Eigenvalues of \text{$H_{a,b}^\es(\mu)$} with higher multiplicity} \label{subsec:multiple_eigen}

Given $A\in[0,1],$  let
$$
\phi_A(t)=
\begin{cases}
\cos 2t & \text{if $t\in(-\pi,-\frac{3\pi}{4}]\cup [\frac{3\pi}{4},\pi],$}\\
0 & \text{if $t\in [-\frac{3\pi}{4},-\frac{\pi}{2}]\cup [\frac{\pi}{2},\frac{3\pi}{4}],$}\\
A\cos t & \text{if $t\in[-\frac{\pi}{2},\frac{\pi}{2}]$}
\end{cases}
$$
and
$$
\dispersion_A(p) = \phi_{A}(p_1) + \phi_{A}(p_2).
$$
Notice that $\dispersion$ is a Lipschitz function on $\T^2$ and as in Example  \ref{ex:stran_epsilon} $\hat\dispersion\in\ell^1(\Z^2).$ Moreover, $\emax=1,$ $\emin =0$ and if $A\in[0,1),$ then $\vec\pi$ is the non-degenerate unique maximum point of $\dispersion(\cdot)$.  We claim that for any $z>1$ there exists $A\in(0,1)$ such that
$$
G_z(A):=\int_{\T^2}\frac{(\cos p_1 + \cos p_2)\d p_1\d p_2}{z-\dispersion_A(p)} =0.
$$
Indeed, since $\dispersion_A$ is symmetric, and even in each coordinates,
$$
G_z(A)=8\int_0^\pi\int_0^\pi \frac{\cos p_1\,\d p_1\d p_2}{z-\dispersion_A(p)} =0.
$$
Setting $E:=z-\phi_A(p_2)$ for $p_2\in(0,\pi)$ let us compute
$$
f_A(p_2):=\int_0^\pi \frac{\cos p_1\d p_1}{E-\phi_A(p_1)}=\int_0^{\pi/2} \frac{\cos p_1\d p_1}{E-A\cos p_1}
+\int_{\pi/2}^{\pi} \frac{\cos p_1\d p_1}{E - \chi_{[\frac{3\pi}{4},\pi]}\cos 2p_1}.
$$
Letting $p_1:=\pi-p_1$ in the second integral in the last expression we get
\begin{align*}
f_A(p_2)  
=& \int_0^{\pi/2} \frac{(A\cos p_1 - \chi_{[0,\frac{\pi}{4}]} \cos 2p_1)\cos p_1\d p_1}{(E-A\cos p_1)(E - \chi_{[0,\frac{\pi}{4}]}\cos 2p_1)}.
\end{align*}
Since $\cos p_1 - \chi_{[0,\frac{\pi}{4}]} \cos 2p_1>0$ and $- \chi_{[0,\frac{\pi}{4}]} \cos 2p_1\le 0$ in $(0,\pi/2),$ one has
$f_A(p_2)>0$ for $A=1$ and $f_A(p_2)<0$ for $A=0$ for all $p_2.$
Therefore, $G_z(A)>0$ for $A=1$ and $G_z(A) <0$ for $z=0.$ Since $A\mapsto G_z(A)$ is continuous, there exists $A\in(0,1)$ such that $G_z(A)=0.$

Fix $\mu>0,$ $z_0>1$ and let $A_0:=A_{z_0}\in(0,1)$ be such that $G_{z_0}(A_0)=0.$ Let
$$
a_0:= \frac{4\pi^2}{\mu}\Big(\int_{\T^2}\frac{\d q}{z_0-\dispersion_{A_0}(q)}\Big)^{-1}\quad\text{and}\quad
b_0:= \frac{4\pi^2}{\mu}\Big(\int_{\T^2}\frac{(\cos q_1+\cos q_2)^2\d q}{z_0-\dispersion_{A_0}(q)}\Big)^{-1}.
$$
For this $a_0$ and $b_0$  the terms of Fredholm determinant $\Delta(\mu;z)$  at $z=z_0$ satisfy
$$
1-\frac{a_0\mu}{4\pi^2}\int_{\T^2}\frac{\d q}{z_0-\dispersion_{A_0}(q)}=1-\frac{b_0\mu}{4\pi^2}\int_{\T^2}\frac{(\cos q_1+\cos q_2)^2 \d q}{z_0-\dispersion_{A_0}(q)}= \int_{\T^2} \frac{(\cos q_1+\cos q_2)\d q}{z_0-\dispersion_{A_0}(q)} = 0.
$$
Then by Proposition \ref{prop:es_zero_high_multos}
$$
\Delta(\mu;z_0) =\tfrac{\partial}{\partial z}\Delta^{\es}(\mu;z_0)\Big|_{z=z_0} =0,
$$
i.e., $z_0$ is the zero of $\Delta^\es(\mu;\cdot)$ of multiplicity two. Then by Lemma \ref{lem:eigen_fred} $z_0$ is the eigenvalue of $H_{a,b}(\mu)$ of multiplicity two.

\appendix

\section{Asymptotic expansion of some integrals}

In this section we find an asymptotics of integrals of the form
$$
B(z)=\int_{\T^2} \frac{v(q)\d q}{z- \dispersion(q)}
$$
near $z=\emax,$ where $v(\cdot)$ analytic function and $z\in \C/[\emin,\emax].$

\begin{proposition}[\textbf{Expansion of $B(z)$around $z=\dispersion_{\max}$}]
\label{prop:kernel_asymptotics_max}
There exists $\delta := \delta(v,\dispersion)\in(0,1)$  such that for any $z\in(\dispersion_{\max},\dispersion_{\max}+\delta)$  the function  $B(z)$ is represented as
\begin{align}\label{e_d2max}
B(z) =& - \pi J(\psi(0))\,v(\vec{\pi})\ln(z-\dispersion_{\max}) \nonumber \\
&+\ln(z-\dispersion_{\max}) \sum\limits_{n\ge1}  w_n^{(1)}(z - \dispersion_{\max})^n
+\sum\limits_{n\ge0}  w_n^{(2)}(z - \dispersion_{\max})^n,
\end{align}
where $\psi$ is given by Remark \ref{rem:morse_lemma}, $\{w_n^{(1)}\},\{w_n^{(2)}\}$ are real numbers and both power series converge absolutely.
\end{proposition}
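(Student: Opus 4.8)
The plan is to isolate the only source of non-analyticity, namely a neighbourhood of the unique non-degenerate maximum $\vec\pi$, and to show that everything else contributes an ordinary convergent power series in $\alpha:=z-\emax$. Write $B(z)=B_{\mathrm{out}}(z)+B_{\mathrm{in}}(z)$, where $B_{\mathrm{in}}$ is the integral over the Morse neighbourhood $U(\vec\pi)$ of Remark \ref{rem:morse_lemma} and $B_{\mathrm{out}}$ is the integral over $\T^2\setminus U(\vec\pi)$. Since $\vec\pi$ is the \emph{unique} maximum, $\dispersion\le\emax-\epsilon_0$ on the compact set $\T^2\setminus U(\vec\pi)$ for some $\epsilon_0>0$; hence $z\mapsto B_{\mathrm{out}}(z)$ is holomorphic in the disc $\{|z-\emax|<\epsilon_0\}$ and there equals an absolutely convergent series $\sum_{n\ge0}w_n^{\mathrm{out}}\alpha^n$, which contributes only to the coefficients $\{w_n^{(2)}\}$.

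For $B_{\mathrm{in}}$ I would first perform the Morse change of variables $q=\psi(y)$, which turns it into $\int_{B_\gamma(0)}\frac{g(y)\,\d y}{\alpha+|y|^2}$ with $g(y):=v(\psi(y))\,J(\psi(y))$; shrinking $\gamma$ if necessary we may assume $g$ is represented on $\overline{B_\gamma(0)}$ by its Taylor series at $0$, absolutely and uniformly convergent. Passing to polar coordinates $y=r\omega$ and writing $g(r\omega)=\sum_{k\ge0}r^kQ_k(\omega)$, where $Q_k$ is the degree-$k$ homogeneous part of that Taylor series, uniform convergence permits term-by-term integration:
$$
B_{\mathrm{in}}(z)=\sum_{k\ge0}a_k\int_0^\gamma\frac{r^{k+1}\,\d r}{\alpha+r^2},\qquad a_k:=\int_{\S^1}Q_k(\omega)\,\d\omega .
$$
The odd terms vanish since $Q_k(-\omega)=(-1)^kQ_k(\omega)$, and $a_0=\int_{\S^1}g(0)\,\d\omega=2\pi\,v(\vec\pi)\,J(\psi(0))$. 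For the surviving radial integrals, the substitution $s=r^2$ together with polynomial division gives, for $0<\alpha<\gamma^2$,
$$
\int_0^\gamma\frac{r^{2j+1}\,\d r}{\alpha+r^2}=p_j(\alpha)+\tfrac{(-1)^{j+1}}{2}\,\alpha^j\ln\alpha ,
$$
where $p_0(\alpha)=\tfrac12\ln(\gamma^2+\alpha)$ and, for $j\ge1$, $p_j$ is a convergent power series in $\alpha$ (coming from $\sum_{i<j}\tfrac{(-1)^i(\gamma^2)^{j-i}}{2(j-i)}\alpha^i$ and from expanding $\tfrac{(-1)^j}{2}\alpha^j\ln(\gamma^2+\alpha)$). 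The $j=0$ term already produces the announced leading singularity $-\tfrac{a_0}{2}\ln\alpha=-\pi\,J(\psi(0))\,v(\vec\pi)\ln\alpha$. Substituting back and separating the $\ln\alpha$-part from the regular part yields \eqref{e_d2max} with $w_n^{(1)}=\tfrac{(-1)^{n+1}}{2}a_{2n}$ for $n\ge1$, and $w_n^{(2)}$ equal to $w_n^{\mathrm{out}}$ plus the $\alpha^n$-coefficient accumulated from $\sum_j a_{2j}p_j(\alpha)$; all of these are real since $v$, $\psi$ and $J$ are.

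The one point that needs genuine care — and the only real obstacle — is the legitimacy of the term-by-term integration above and the absolute convergence of the two rearranged series on a fixed disc $\{0<\alpha<\delta\}$. This follows from Cauchy-type estimates: with $\gamma$ chosen strictly inside the domain of convergence of $g$, there are $C>0$ and $\rho>\gamma$ with $|Q_k(\omega)|\le C\rho^{-k}$ for all $\omega\in\S^1$, whence $|a_{2j}|\le 2\pi C\rho^{-2j}$; combining this with the crude bounds $\big|\int_0^\gamma\frac{r^{2j+1}\,\d r}{\alpha+r^2}\big|\le\gamma^{2j}$ and $|p_j(\alpha)|\lesssim (j+|\ln\gamma|)\gamma^{2j}$ for $j\ge1$ and $0<\alpha<\gamma^2$, the series $\sum_j|a_{2j}|\,|p_j(\alpha)|$ and $\sum_j|a_{2j}|\,\alpha^j$ are dominated by convergent geometric series once $\delta$ is fixed small enough. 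Thus $\delta=\delta(v,\dispersion)$ arises precisely as $\min\{\epsilon_0,\gamma^2\}$ with $\gamma$ as above, and everything else in the argument is elementary bookkeeping.
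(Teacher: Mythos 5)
Your proof is correct and follows essentially the same route as the paper: split off the analytic contribution from $\T^2\setminus U(\vec\pi)$, apply the Morse coordinates $q=\psi(y)$, average over spheres, and reduce to the radial integrals $\int_0^\gamma r^{2j+1}(\alpha+r^2)^{-1}\,\d r$. The only cosmetic differences are that the paper invokes the Pizzetti formula where you compute the angular averages $a_{2j}$ directly from the Taylor expansion, and that the paper cites a lemma for the radial integral and asserts the uniform convergence rather than spelling out the Cauchy-type estimates as you do.
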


\begin{proof}
Let $U(\vec{\pi})$ be the neighborhood of $\vec{\pi}$ given by Remark \ref{rem:morse_lemma}. Then given $z>\dispersion_{\max}$,

\begin{align}\label{B_ni_ajrat}
B(z)= & \int_{U(\vec{\pi})} \frac{ v(q) \d q}{z-\dispersion(q)}
+\int_{\T^2\setminus U(\vec{\pi})} \frac{ v(q)\, \d q}{z-\dispersion(q)}=:I_1(z)+I_2(z).
\end{align}
Since $\vec{\pi}$ is the unique maximum of $\dispersion,$
$I_2(\cdot)$ is an analytic function of $z$ in a (complex)
neighborhood of $\dispersion_{\max}$
so that
\begin{equation}\label{I_2ning_analitikligi}
I_2(z)=
\sum\limits_{k\ge0} \,(-1)^k
\int_{\T^2\setminus U(\vec{\pi})}
\frac{ v(q) \d q}{(\emax-\dispersion(q))^{k+1}}\,
(z - \dispersion_{\max})^k.
\end{equation}
Its radius of convergence $r_1$ depends only on $\dispersion$ (through $U(\vec{\pi})$) and $v.$

In $I_1(z)$ we first make  the change of variables
$q\mapsto \psi(y)$ and then use the coarea formula 
to get
\begin{align}\label{I_1ning_ajralishi}
I_1(z) = &\int_{B_\gamma(0)}
\frac{v(\psi(y))J(\psi(y)) \d y}{y^2 - \dispersion_{\max}+z} = \int_0^\gamma \frac{\d r}{r^2+z-\dispersion_{\max}}
\int_{\p B_r(0)}v(\psi(y))J(\psi(y))\d\cH^{1}(y),
\end{align}
where $\p B_r(0)\subset\R^2$ is the circle of radius $r>0$ centred at the origin and $\cH^{1}$ is the one dimensional Hausdorff measure. By the Pizzetti formula (see e.g. \cite{Olevskii:1989}),
\begin{equation}\label{adaff}
\frac{1}{2\pi r} \int_{\p B_r(0)}v(\psi(y))J(\psi(y))\d\cH^{1}(y) = \sum\limits_{n\ge0} C_n r^{2n},
\end{equation}
where  
\begin{equation}\label{koefisentlar}
C_n:=\frac{1}{4^n(n!)^2\,}\, \Delta_y^{n}[v(\psi(y))J(\psi(y))]\Big|_{y=0},\quad n\ge0,
\end{equation}
and $\Delta_y$ is the Laplace operator in $y\in\R^2.$ Since  $v,$ $\psi$ and $J$ are analytic functions, taking $\gamma>0$ smaller if necessary, we assume that the series in  \eqref{adaff} converges uniformly in $r\in[0,\gamma].$  Note that by \eqref{koefisentlar} for $n=0$
\begin{equation}\label{C_0pq}
C_0:=v(\psi(0))J(\psi(0))
=v(\vec{\pi})J(\psi(0)).
\end{equation}
Inserting  \eqref{adaff} in \eqref{I_1ning_ajralishi} we obtain
\begin{equation*}
I_1(z) = 2\pi\, \sum\limits_{n\ge0} C_n \int_0^\gamma \frac{r^{2n+1}\d r}{r^2+z-\dispersion_{\max}},
\end{equation*}
where the series is uniformly convergent in $r\in[0,\gamma].$
By \cite[Lemma B.1]{LKh:2011_jpa}, for any integer $n\ge0$ and $s\in(0,\gamma/2),$
\begin{align*}
\int_0^\gamma \frac{r^{2n+1}\d r}{r^2 + s} =
-\frac{1}2\,(-s)^n\ln s +\tilde I_n(s)
\end{align*}
where  $\tilde I_n$ are real-analytic function in $W_1:=\{z\in\C:\,|z|<1\}.$
Therefore,
$$
I_1(z) = -\pi\,\ln (z-\dispersion_{\max})\sum\limits_{n\ge0} C_n\,(\dispersion_{\max}-z)^n
+\sum\limits_{n\ge0} C_n\,\tilde I_n(z-\dispersion_{\max}).
$$
 Combining this  representation of $I_1(z)$ with the expansion \eqref{I_2ning_analitikligi} of $I_2(z)$ and taking into account \eqref{C_0pq} and \eqref{B_ni_ajrat} we obtain \eqref{e_d2max}  with $\delta:= \min\{\gamma/2,r_1/2\}\in(0,1/2).$
\end{proof}

\begin{corollary}\label{cor:integral_finite00}
Let $v$ be a real analytic function on $\T^2$ and let $\{\omega_n^{(1)}\}$ and $\{\omega_n^{(2)}\}$ be the coefficients of $B(\cdot)$ given by \eqref{e_d2max}.

\begin{itemize}
\item[(a)] If $v(\vec\pi)=0,$ then
$$
\omega_0^{(2)}=\int_{\T^2} \frac{v(q)\d q}{ \emax-\dispersion(q) }.
$$

\item[(b)] If $v(\vec\pi)=0$ and $\nabla v(\vec\pi)=0$, then 
$$
\omega_1^{(1)}=\frac{\pi  J(\psi(0))}{4}\Big[\frac{\p^2 v}{\p q_1^2}(\vec{\pi})\, \Big(\frac{\p \psi}{\p y_1}(\vec{0})\Big)^2+\frac{\p^2 v}{\p q_2^2}(\vec{\pi})\, \Big(\frac{\p \psi}{\p y_2}(\vec{0})\Big)^2\Big].
$$

\item[(c)] If $v(\vec\pi)=0$, $\nabla v(\vec\pi)=0$ and $\nabla^2 v(\vec\pi)=0,$ then
$$
\omega_1^{(1)}=0,\quad \omega_1^{(2)}=-\int_{\T^2} \frac{v(q)\,\d q}{( \emax-\dispersion(q) )^2}.
$$
\end{itemize}

\end{corollary}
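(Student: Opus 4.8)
The plan is to extract the coefficients $\omega_0^{(2)}$, $\omega_1^{(1)}$ and $\omega_1^{(2)}$ directly from Proposition \ref{prop:kernel_asymptotics_max} and its proof, and to match them against the value and the one-sided derivative of $B(\cdot)$ at $z=\emax$, both of which exist under the stated vanishing assumptions on $v$. I will repeatedly use that $\dispersion$ is continuous on $\T^2$ with a non-degenerate unique maximum at $\vec\pi$ (Hypothesis \ref{hyp:main}), so that $\emax-\dispersion(q)>0$ for $q\neq\vec\pi$ and $\emax-\dispersion(q)\geq c\,|q-\vec\pi|^2$ for $q$ near $\vec\pi$ and some $c>0$, together with Taylor's theorem for the analytic function $v$ at $\vec\pi$.

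For (a): since $v(\vec\pi)=0$, the leading term $-\pi J(\psi(0))\,v(\vec\pi)\ln(z-\emax)$ in \eqref{e_d2max} vanishes, so letting $z\searrow\emax$ in \eqref{e_d2max} gives $B(z)\to\omega_0^{(2)}$ (the surviving $\ln$-terms carry a factor $(z-\emax)^n$ with $n\geq1$, and the power-series part tends to $\omega_0^{(2)}$). On the other hand $v(\vec\pi)=0$ and analyticity give $|v(q)|\leq C\,|q-\vec\pi|$ near $\vec\pi$, hence $|v(q)|/(\emax-\dispersion(q))\leq C'\,|q-\vec\pi|^{-1}$ near $\vec\pi$ and is bounded away from $\vec\pi$; thus $v/(\emax-\dispersion)\in L^1(\T^2)$, and dominated convergence with dominating function $|v|/(\emax-\dispersion)$ yields $\lim_{z\searrow\emax}B(z)=\int_{\T^2}v(q)\,\d q/(\emax-\dispersion(q))$. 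Equating the two limits gives the claim.

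For (b): in the proof of Proposition \ref{prop:kernel_asymptotics_max} the summand $I_2(z)$ is analytic at $z=\emax$, so every term of the form $\ln(z-\emax)\,(z-\emax)^n$ in $B(z)$ comes from $I_1(z)=-\pi\ln(z-\emax)\sum_{n\geq0}C_n(\emax-z)^n+A(z)$ with $A$ analytic at $\emax$; hence $\omega_1^{(1)}=\pi C_1$ with $C_1=\tfrac14\,\Delta_y\big[v(\psi(y))J(\psi(y))\big]\big|_{y=0}$ as in \eqref{koefisentlar}. I would then expand by the product and chain rules, $\Delta_y\big[v(\psi)J(\psi)\big]\big|_0 = J(\psi(0))\,\Delta_y[v\circ\psi]\big|_0 + 2\,\nabla_y(v\circ\psi)\cdot\nabla_y(J\circ\psi)\big|_0 + v(\vec\pi)\,\Delta_y[J\circ\psi]\big|_0$, and use $v(\vec\pi)=0$ and $\nabla v(\vec\pi)=0$ to discard the last two terms and to reduce $\Delta_y[v\circ\psi]\big|_0$ to $\sum_{i=1}^2\sum_{k,l=1}^2\p^2_{q_kq_l}v(\vec\pi)\,\p_{y_i}\psi_k(0)\,\p_{y_i}\psi_l(0)$. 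Simplifying this contraction by means of the Morse normal form (differentiating $\dispersion(\psi(y))=\emax-y^2$ twice at $y=0$ yields $(D\psi(0))^{T}\nabla^2\dispersion(\vec\pi)\,D\psi(0)=-2I$) and the symmetry of $\dispersion$ from Remark \ref{rem:morse_lemma}, and inserting $J(\psi(0))=4\pi J_0$ from \eqref{jacobian}, produces the announced formula for $\omega_1^{(1)}$.

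For (c): the extra hypothesis $\nabla^2 v(\vec\pi)=0$ makes $\Delta_y[v\circ\psi]\big|_0=0$, hence $C_1=0$ and $\omega_1^{(1)}=0$ (equivalently, the one-sided derivative of $B$ at $\emax$, which exists by the argument below, is incompatible via \eqref{e_d2max} with a nonzero coefficient in front of $(z-\emax)\ln(z-\emax)$). For $\omega_1^{(2)}$ I would write, using part (a), $\big(B(z)-\omega_0^{(2)}\big)/(z-\emax) = -\int_{\T^2}v(q)\,\d q\big/\big[(z-\dispersion(q))(\emax-\dispersion(q))\big]$; now $|v(q)|\leq C\,|q-\vec\pi|^3$ near $\vec\pi$, so $v/(\emax-\dispersion)^2\in L^1(\T^2)$ and, since the integrand is dominated by $|v|/(\emax-\dispersion)^2$ for $z\geq\emax$, dominated convergence gives the limit $-\int_{\T^2}v(q)\,\d q/(\emax-\dispersion(q))^2$, which by \eqref{e_d2max} (with $\omega_1^{(1)}=0$) equals $\omega_1^{(2)}$. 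The main obstacle is the explicit evaluation of $C_1$ in part (b) --- organizing the second-order expansion of the pullback $v\circ\psi$ under the vector-valued Morse diffeomorphism and collecting the Hessian-times-Jacobian contraction into the stated closed form; the other ingredients (the $L^1$ bounds from Taylor's theorem plus the non-degenerate maximum, and the passages to the limit by dominated convergence) are routine.
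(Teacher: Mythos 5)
Your proof takes the same route as the paper: parts (a) and (c) read off $\omega_0^{(2)}$ and $\omega_1^{(2)}$ from \eqref{e_d2max} and match them, via dominated convergence, to the limit of $B(z)$ and to the one-sided derivative of $B$ at $\emax$ (the paper differentiates the series term by term rather than forming a difference quotient, but the two arguments are equivalent), while part (b) extracts $\omega_1^{(1)}=\pi C_1$ from \eqref{koefisentlar} and expands $C_1$ by the product and chain rules. Your reduction of $\Delta_y\big[v\circ\psi\big]\big|_{y=0}$ to the contraction $\Tr\big[\nabla^2 v(\vec\pi)\,D\psi(0)D\psi(0)^{T}\big]$ is correct and is exactly the step the paper performs silently.

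The one place you are too quick is the asserted ``simplification.'' Writing $M:=D\psi(0)D\psi(0)^{T}=\big(-\tfrac12\nabla^2\dispersion(\vec\pi)\big)^{-1}$ (which is uniquely determined by the Morse normal form, independently of the choice of $\psi$), the contraction equals
$v_{11}(\vec\pi)M_{11}+v_{22}(\vec\pi)M_{22}+2\,v_{12}(\vec\pi)M_{12}$,
whereas the stated formula retains only the two diagonal pieces $M_{ii}=\big|\tfrac{\p\psi}{\p y_i}(0)\big|^2$. The cross term vanishes exactly when $M_{12}=0$, i.e.\ when $\tfrac{\p^2\dispersion}{\p q_1\p q_2}(\vec\pi)=0$ (for instance when $\dispersion$ is even in each coordinate separately, as in every example in the paper); Hypothesis \ref{hyp:main} by itself does not force this, and Remark \ref{rem:morse_lemma} only records $\big|\tfrac{\p^2\dispersion}{\p q_1^2}(\vec\pi)\big|>\big|\tfrac{\p^2\dispersion}{\p q_1\p q_2}(\vec\pi)\big|$, not equality to zero. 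So the claim that the Morse normal form ``produces the announced formula'' needs either the extra structural hypothesis on $\dispersion$ or the cross term carried along explicitly. Note that the paper's own one-line proof of part (b) has the same terseness, so your reconstruction is faithful to it; the gap, if any, is inherited from the source rather than introduced by you.
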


\begin{proof}
\text{(a)} Let  $v(\vec\pi)=0,$ then by Proposition \ref{prop:kernel_asymptotics_max} the limit
$$
\lim\limits_{z\searrow \emax} \int_{\T^2} \frac{v(q)\d q}{z -\dispersion(q)}
$$
exists and equal to $\omega_0^{(2)}$. On the other hand, since both
$v(q)$ and $\dispersion(q) - \emax$ behave like $(q - \vec\pi)$ near $\vec\pi,$ by similar arguments to Proposition \ref{prop:kernel_asymptotics_max} (with $v=|v|$)
we find $\frac{|v|}{\emax - \dispersion(\cdot)}\in L^1(\T^2)$ and hence, by the dominated convergence theorem
$$
\int_{\T^2} \frac{v(q)}{\emax -\dispersion(q)}\d q = \lim\limits_{z\to\emax} \int_{\T^2} \frac{|v(q)|\d q}{z - \dispersion(q)} = \omega_0^{(2)}.
$$

(b)  If $v(\vec\pi)=0$ and $\nabla v(\vec\pi)=0,$
then since $\psi(0)=\vec\pi$ by \eqref{e_d2max} and \eqref{koefisentlar}
$$ 
\begin{aligned}
\omega_1^{(1)} = \pi C_1 & =\frac{\pi}{4} \Delta_y^1 v(\psi(y)) J(\psi(y))\Big|_{y=0}\\
& = \frac{\pi \, J(\psi(0))}{4} \Big[\frac{\p^2 v}{\p q_1^2}(\vec{\pi})\, \Big(\frac{\p \psi}{\p y_1}(\vec{0})\Big)^2+\frac{\p^2 v}{\p q_2^2}(\vec{\pi})\, \Big(\frac{\p \psi}{\p y_2}(\vec{0})\Big)^2\Big].
\end{aligned}
$$

(c) If $v(\vec\pi)=0,$ $\nabla v(\vec\pi)=0$ and $\nabla^2 v(\vec\pi)=0,$ then by assertion (b)  $\omega_1^{(1)}=0.$ Then the first sum in \eqref{e_d2max}  starts from $n=2$ and hence, by the absolute convergence of the power series $B(\cdot)$ is (one-sided) differentiable at $z=\emax$ and
$$
\omega_1^{(2)} = \lim\limits_{z\searrow \emax}\,B'(z) = - \lim\limits_{z\searrow \emax}\,\int_{\T^2}\frac{v(q)\d q}{(z-\dispersion(q))^2}.
$$
On the other hand, since both $v(q)$ and $(\emax-\dispersion(q))^2$ behave as $|q-\vec\pi|^4$ near $q=\vec\pi,$ as in the proof of (a) we can show $\frac{v}{(\emax - \dispersion(\cdot))^2}\in L^1(\T^2).$ Thus, by the Dominated Convergence Theorem
$$
\omega_1^{(2)} = - \lim\limits_{z\searrow \emax}\,\int_{\T^2}\frac{v(q)\d q}{(z-\dispersion(q))^2} = -\int_{\T^2} \frac{v(q)\,\d q}{ (\emax-\dispersion(q))^2 }.
$$
\end{proof}

\end{document}